\documentclass[12pt]{article}
\usepackage[utf8]{inputenc}

\usepackage{amsmath,amsthm,amssymb,amsfonts,amscd,graphicx}
\usepackage{mathtools}
\usepackage{bbm}
\usepackage{comment}
\usepackage{natbib}
\usepackage{xcolor}
\usepackage[colorlinks, citecolor=blue]{hyperref}
\usepackage{framed,tcolorbox}
\usepackage{authblk}

\renewcommand{\exp}[1]{\operatorname{exp}\left(#1\right)} 
\providecommand{\argmax}{\mathop\mathrm{arg max}} 
\providecommand{\argmin}{\mathop\mathrm{arg min}}

\newcommand\var{\mathrm{var}}
\newcommand\cov{\mathrm{cov}}
\newcommand{\eps}{\varepsilon}

\newcommand{\wh}{\widehat}
\newcommand{\wt}{\widetilde}

\newcommand{\rom}[1]{\uppercase\expandafter{\romannumeral #1\relax}}

\newcommand{\bmln}{\begin{multline*}}
\newcommand{\emln}{\end{multline*}}

\newcommand{\be}[1]{\begin{equation*}#1\end{equation*}}
\newcommand{\ben}[1]{\begin{equation}#1\end{equation}}

\newcommand{\ml}[1]{\begin{multline*}{#1}\end{multline*}}

\def\l{\left}
\def\r{\right}
\newcommand{\m}{\mathcal}
\newcommand{\mb}{\mathbb}

\newcommand{\pr}[1]{\mathbb{P}{\left(#1\right)}}
\newcommand{\dotp}[2]{\left\langle#1,#2\right\rangle}

\def\ml#1{\begin{multline*}{#1}\end{multline*}}

\newtheorem{theorem}{Theorem}
\newtheorem{lemma}{Lemma}

\theoremstyle{thmstyletwo}%
\newtheorem{remark}{Remark}%

\newtheorem{corollary}{Corollary}

\theoremstyle{thmstylethree}%

\begin{document}

\title{Deviation inequalities for U-processes of growing order, with applications to half-space depth}

\author[1]{Stanislav Minsker\footnote{S. Minsker acknowledges support by the National Science Foundation grant DMS CAREER-2045068.}}
\author[2]{Shunan Yao}
\affil[1]{Department of Mathematics, University of Southern California, \protect\\
\texttt{minsker@usc.edu}}
\affil[2]{Department of Mathematics, Hong Kong Baptist University, \protect\\
\texttt{yaoshunan@hkbu.edu.hk}}

\date{}
\maketitle
\begin{abstract}
We prove new deviation inequalities for infinite-order U-processes, that is, U-processes whose order may increase with the sample size. Our goal is to obtain the sharpest possible dependence of the tail bounds on the order of the process, as well as to characterize the largest range in which the process exhibits sub-Gaussian deviations.

We derive closed-form inequalities for processes indexed by Euclidean classes, including VC-subgraph classes of functions. We then illustrate the statistical applications of our results to multivariate depth. In particular, we introduce a multivariate analogue of the Hodges–Lehmann estimator based on Tukey’s median, establish sub-Gaussian concentration inequalities with explicit constants for its deviations from the mean, and identify an interesting phenomenon concerning its asymptotic distribution.
\end{abstract}

\mathtoolsset{showonlyrefs = true}

\section{Introduction}
\label{section:intro}

Inequalities for the sums of independent random variables form the backbone of the modern non-asymptotic theory of high-dimensional statistics. Notable examples include exponential concentration inequalities due to Bernstein and Hoeffding as well as moment inequalities due to Rosenthal, Marcinkiewicz, and Zygmund. Of particular importance are the sharp functional extensions of these results for suprema of empirical processes, including McDiarmid's and Talagrand's concentration inequalities, see \citep[][]{boucheron2013concentration} for the modern exposition. However, many problems involve dependence where the aforementioned inequalities are often insufficient. 
U-statistics, introduced by P. Halmos \citep{halmos1946theory} and W. Hoeffding \citep{hoeffding1948class} in the 1940s, constitute one such example. U-statistics have been an active topic of research ever since due to their far-reaching applications \citep{serfling2009approximation,lee2019u}. 

Let $(S,\m S)$ be a measurable space, and let $X\in S$ be a random variable with distribution $P$. Moreover, suppose that $X_1,\ldots,X_N$ are i.i.d. copies of $X$. Given $m<N$, assume that $h(x_1,\ldots,x_m)$ is a permutation-symmetric function of $m$ $S$-valued variables\footnote{We say that $h$ is permutation-symmetric if $h(x_{\pi(1)},\ldots,x_{\pi(m)}) = h(x_1,\ldots,x_m)$ for any $x_1,\ldots,x_m\in \mb R$ and any permutation $\pi:\{1,\ldots,m\}\mapsto \{1,\ldots,m\}$.}. Let $\mathcal{I}^N_m = \l\{ J\subset \{1,\ldots,N\}: \ |J|=m\r\}$. A U-statistic with kernel $h$ is defined as 
\[
U_{N,m}(h) = \frac{1}{{N \choose m}} \sum_{J \in \mathcal{I}^N_m} h\l(X_j,\  j \in J\r)\,,
\]
where for $J=\{i_1,i_2,\ldots,i_m\}$, $h\l(X_j,\  j \in J\r)$ is a shorthand for $h\left(X_{i_1},\ldots,X_{i_m}\right)$. The integer $m\geq 1$ is known as the \emph{order} or \emph{degree} of the U-statistic. The key property of $U_{N,m}(h)$ is that under rather general assumptions, it has minimal variance among all unbiased estimators of $\mb Eh:=\mb Eh(X_1,\ldots,X_m)$ \citep{lee2019u}. 
In this work, we study U-statistics with non-degenerate kernels, meaning that the conditional expectation $h^{(1)}(X):=\mb E \l[ h(X,X_2,\ldots,X_m)|X\r]$ is such that its variance is positive. 
Next, we define U-processes, a functional extension of U-statistics. 
Let $\m H_m$ be a collection of measurable, bounded and permutation-symmetric kernels of $m$ variables. A U-process indexed by $\m H_m$ is a stochastic process defined as a collection of random variables
\[
\l\{U_{N,m}(h), \ h\in \m H_m\r\}.
\]
U-processes generalize U-statistics in the same way as empirical processes generalize sample means. 
Observe that when $J_1\cap J_2\ne \emptyset$, the terms $h\l(X_j,\  j \in J_1\r)$ and $h\l(X_j,\  j \in J_2\r)$ in the definition of a U-statistic are dependent, whence methods designed for dealing with independent summands are not readily applicable. 

Under rather general assumptions \cite[e.g. see][chapter 5]{de2012decoupling}, non-degenerate U-process $\sqrt{N/m}(U_{N,m}(h) - \mb Eh)$ converges weakly to a Gaussian process. In this paper, we are interested in the following closely related but non-asymptotic question: \emph{for which values of $t$ the tail probabilities $\pr{\sup_{h\in \m H_m}\l| U_{N,m}(h) - \mb Eh\r|\geq t}$ are sub-Gaussian?} In other words, we are interested in the regime when 
$\sup_{h\in \m H_m}\l| U_{N,m}(h) - \mb Eh\r|$ behaves like the supremum of a Gaussian process. 
The main assumptions that we make are that 
\begin{enumerate}
\item[(a)] $m$ is an increasing function of the sample size $N$ and
\item[(b)] $\sup_{h\in \m H_m} \|h\|_\infty<\infty$, where $\|\cdot\|_\infty$ is the usual sup-norm.
\end{enumerate}
While condition (b) is standard, the majority of existing literature focuses on the situation when $m$ is fixed, and the case of varying $m$ received less attention. \citet{frees1989infinite} was the first to consider ``infinite-order'' U-statistics and their applications to renewal theory. Recently, such U-statistics have been investigated in relation to performance of Breiman's random forests algorithm (for example, see the papers by \citet{song2019approximating,peng2022rates,mentch2016quantifying}) as well as the median of means estimator \citep{minsker2023u}. 
U-processes of fixed degree have been studied in relation to statistical procedures that generalize classical M-estimators \citep{arcones1993limit,brunel2025asymptotics}. \citet{arcones1995bernstein} was the first to prove Bernstein-type concentration inequalities for U-statistics and U-processes that capture the correct variance parameter $m\,\sup_{h\in \m H_m}\var\l(h^{(1)}(X)\r)$. The proofs were based on the powerful decoupling technique \citep{de2012decoupling} that unfortunately yields highly suboptimal dependence of the constants on the order $m$.  \cite{maurer2019bernstein} used a different technique that yields Bernstein-type bound for U-statistics that is useful for $m$ up to $o(N^{1/3})$.
\cite{sherman1994maximal} introduced a new symmetrization technique that allowed him to bypass decoupling, thus yielding sharper dependence on $m$. These ideas were further developed and extended by \citet{heilig2001limit} allowing $m$ to grow as $m=o(N^{1/3})$ and by \citet{song2019approximating} with $m$ of order $o(N^{1/2})$ in some applications. However, these advances still did not yield sharp concentration inequalities. Recently, \citet{minsker2023u} showed that for a large class of underlying distributions, sub-Gaussian bounds for U-statistics are possible for $m$ as large as $m=o(N)$ and $t = o(N/m)$. 
In this work we will harness advances in the theory of concentration inequalities to prove bounds for U-process of order $m=o\l(N^{1/2}\r)$ indexed by Euclidean classes, formally defined below. 

\subsection{Preview of the main results}
\label{section:prelim}

Let us recall the definition and main properties of Hoeffding's decomposition which is crucial for the analysis of U-statistics. For $j=1,\ldots,m,$ define 
\ben{
\label{eq:pi}
h^{(j)}(x_1,\ldots,x_j):=(\delta_{x_1} - P_Y)\times\ldots\times(\delta_{x_j}-P_Y)\times P_Y^{m-j} h.
}
It is well known that $h^{(j)}$ is an orthogonal projection of $h$ onto a particular subspace of $L_2(P)$ \citep[][section 1.7]{lee2019u}.
The kernels $h^{(j)}$ have the property of \emph{complete degeneracy}, meaning that $\mb E h^{(j)}(x_1,\ldots,x_{j-1},X_j) = 0$ for $P$-almost all $x_1,\ldots,x_{j-1}$ while $h^{(j)}(X_1,\ldots,X_j)$ is non-zero with positive probability. It is immediate that 
\[
h(x_1,\ldots,x_m) = \sum_{j=1}^m \sum_{J\subseteq [m]: |J|=j} h^{(j)}(x_i,\, i\in J),
\]
where $[m]$ is a shorthand for the set $\{1,\ldots,m\}$. 
In particular, the partial sum $\sum_{j=1}^k \sum_{J\subseteq [m]: |J|=j} h^{(j)}(x_i,\, i\in J)$ can be viewed as the best approximation of $h$, in the mean-squared sense, in terms of sums of functions of at most $k$ variables. 
Hoeffding's decomposition \citep{hoeffding1948class} states that 
\begin{align}
\label{eq:h-decomposition}
  U_{N,m}(h) - \mb E U_{N,m}(h) = \frac{m}{N} \sum_{i=1}^N h^{(1)}\l(X_i\r) + \sum_{j=2}^m \frac{{m \choose j}}{{N \choose j}} \sum_{J \in \mathcal{I}^N_j} h^{(j)}\l(X_i,\  i \in J\r).
\end{align}
The first term 
\[
\frac{m}{N} \sum_{j=1}^N h^{(1)}\l(X_j\r)
\]
is known as the \textit{H\'{a}jek's projection}, while the terms corresponding to $j\geq 2$ are U-statistics with completely degenerate kernels $h^{(j)}$, that is, 
\[
U_{N,m}^{(j)}(h):=\frac{1}{{N\choose j}} \sum\limits_{J \in \mathcal{I}^N_j} h^{(j)}(X_i, \ i\in J).
\]
Moreover, all terms in the representation \eqref{eq:h-decomposition} are uncorrelated. It is known \citep{minsker2023u} that 
\[
\var\l(  \frac{{m\choose j} }{{N\choose j}} \sum_{J \in \mathcal{I}^N_j} h^{(j)}(X_i,\, i\in J) \r)\leq \var(h(X_1,\ldots,X_m) \l( \frac{m}{N}\r)^j,
\]
hence in the case of non-degenerate U-statistics, H\'{a}jek's projection is a leading term that controls the behavior of $U_{N,m}(h)$. 
We further expand this claim in two directions: first, by studying U-processes instead of U-statistics and second, by investigating the rate of decay of tail probabilities of the higher-order terms $U_{N,m}^{(j)}(h), \ j\geq 2$ and establishing the range where these terms are negligible \emph{with exponentially high probability}. 
Detailed tail behavior of the higher-order terms $U_{N,m}^{(j)}(h), \ j\geq 2$ in the ``fixed-$m$ regime'' is well understood, although rather complex \citep{adamczak2006moment,gine2000exponential}. Here, we focus on the regime where $m=m(N)$ is an increasing sequence, which presents new challenges in the analysis. 
To this end, we prove new moment inequalities for U-processes with completely degenerate kernels. Specifically, we derive upper bounds on 
\[
\mb E \sup_{h \in \mathcal{H}_m}\left\lvert \frac{{m \choose j}}{{N \choose j}} \sum_{J \in \mathcal{I}^N_j} h^{(j)}\l(X_i,\  i \in J\r)\right\rvert^p
\]
for \(p \ge 2\) and \(j \ge 2\) that are summarized in Theorem \ref{thm: thm2}. Our approach is based on a new version of Bonami's inequality for Rademacher chaos processes inspired by the work of \cite{dirksen2015tail} (see section \ref{sec:hypercontractive}). While sharp inequalities that capture mixed tail behavior exist in the literature (see \citep[Theorem 14]{boucheron2005moment}, \cite{adamczak2006moment}), our result provides simple \emph{computable} upper bounds with better dependence on the order $m$ than, say, \cite[Corollary 5]{boucheron2005moment}. 

If the class $\m H_m$ admits well-behaved complexity estimates (for example, this is the case for VC-subgraph classes \citep{wellner2013weak}), then the bounds of Theorem \ref{thm: thm2} can be combined with the standard tools for the analysis of empirical processes to state an explicit deviation inequality whose informal version looks as follows: for a non-degenerate $U$-process $U_{N,m}(h)$, 
\begin{multline}
\label{eq:U-b}
\sup_{h\in \m H_m} \l| U_{N,m}(h) - \mb E U_{N,m}(h) \r| 
    \leq (1+\eps)\l[\mb E\sup_{h\in \m H_m}\l|\frac{m}{N} \sum_{i=1}^N h^{(1)}\l(X_i\r)\r| \r.
    \\
    + \l. \sqrt{2t \,m\var\l(h^{(1)}(X)\r)}\sqrt{\frac{m}{N}} \r]
\end{multline}
with probability at least \(1-2e^{-t}\) for $t$ smaller than $c\frac{N}{m^2}$. 
Here, $\eps=\eps(N,m)>0$ can be arbitrary small when $N$ is large. 
A complete version of this statement is given in Theorem \ref{thm:Bernstein-bound}. Let us remark that inequality \eqref{eq:U-b} can be viewed as a version of Bousquet's concentration inequality \citep{bousquet2002bennett} for U-processes. 

As an application of our results, we define and analyze an extension of halfspace depth and the associated halfspace median. Our construction is inspired by the ideas behind Tukey's depth \citep{tukey1975mathematics} and the univariate Hodges-Lehmann estimator \citep{hodges1963estimates}. The latter is often used instead of the sample median when efficiency is a concern. It is known \citep{nolan1999min,bai1999asymptotic, masse2002asymptotics} that the asymptotic distribution of Tukey's median is non-normal and is given by a functional of a Brownian bridge process. On the other hand, a version of Tukey's median that we define in section \ref{sec:THL} turns out to be asymptotically normal (see Theorem~\ref{thm:normality}). Moreover, it satisfies non-asymptotic deviation guarantees with explicit constants, stated in Theorem \ref{th:tukey-nonasymp}, that mimics guarantees for the Gaussian sample mean even if the underlying distribution is heavy-tailed.

The remainder of the paper is organized as follows: section \ref{section:notation} introduces notation frequently used throughout the paper. 
Section \ref{sec:bernstein} contains the main results about U-processes. Section \ref{sec:THL} describes examples of statistical applications. 
Proofs of the technical results are given in appendices.

\subsection{Notation}
\label{section:notation}

Throughout this paper, we use $c, c', C, C_1$, etc., to denote universal constants that can take different values in different parts of the paper. We also write $C_a$ to denote a constant that depends only on $a$. Given two positive sequences $a_n$ and $b_n$, we will write $a_n \gg b_n$ if $b_n/a_n \rightarrow 0$ as $n \rightarrow \infty$. Similarly, we write $a_n \ll b_n$ if $b_n \gg a_n$. For any $a, b \in \mb R$, $a \vee b := \max\{a,b\}$ and $a \wedge b := \min\{a,b\}$. The ``positive logarithm'' $\log_+(x)$ stands for $\log(x) \vee 0$ for all $x > 0$. 
Given $x\in \mb R^d$, $\|x\|$ will denote its Euclidean norm. Similarly, for a matrix $A\in \mb R^{d\times p}$, $\|A\|$ will stand for its operator norm.  
Given a random variable $Z$, we will write $\mb E^{1/p} |Z|$ in place of $\l(\mb E|Z|\r)^{1/p}$. 
If $h$ is a function of $m$ variables, we will write $\mb Eh$ and $\var(h)$ instead of $\mb Eh(X_1,\ldots,X_m)$ and $\var(h(X_1,\ldots,X_m)$. The $L_p$ norm of $Z$ is denoted $\|Z\|_p:= \mb E^{1/p}\l[\l|Z\r|^p\r]$. 
Finally, if $f(X,Y)$ is a function of random vectors $X$ and $Y$, then $\mb E_X f(X,Y)$ will stand for the conditional expectation $\mb E[f(X,Y)|Y]$.

Throughout the paper, we will frequently use the fact that sum and maximum are interchangeable: that is, for nonnegative numbers $a,b,c$, $a\leq b+c$ implies that $a\leq 2(b\vee c)$. We will often use this fact without further elaboration. Another elementary consequence of Markov's inequality, which will be applied routinely, concerns the transition between moment and deviation estimates. Namely, if $Z$ is a nonnegative random variable such that for all $p\geq p_0$, $\mb E^{1/p} Z^p \leq f(p)$, then for all $t\geq p_0$, $\pr{Z\geq ef(t)}\leq e^{-t}$. 

\section{Moment and deviation inequalities for $U$-processes}
\label{sec:bernstein}

Let $\l\{X_t, t \in T\r\}$ be a stochastic process. Following a standard convention, we define $\mb E \sup_{t \in T} X_t$ via
\begin{equation}
\label{def:sup}
    \mb E \sup_{t \in T} X_t = \sup\l\{\mb E \sup_{t \in F} X_t, F \subseteq T,  F \text{ is finite}\r\}\,.
\end{equation}
It is well known that this definition of $\mb E \sup_{t \in T} X_t$ coincides with ``usual'' definition of $\mb E \sup_{t \in T}X_t$ if $T$ is separable (see section 2.2 in the book by \citet{talagrand2014upper} for the details). 
The goal of this section is to prove a deviation bound for the supremum of the U-process
\begin{align*}
\m H_m \ni  h \mapsto  U_{N,m}(h) :=  \frac{1}{{N \choose m}}  \sum_{J \in \mathcal{I}_m^N} h\l(X_j, j \in J\r),
\end{align*}
where $\mathcal{H}_m$ is a class of symmetric kernels of order $m$. Throughout this section, we assume that $\sup_{h \in \mathcal{H}_m}\|h\|_\infty = 1$ for all $m$. Let 
\[
V_{N,j}(h) = \frac{{m \choose j}^{1/2}}{{N \choose j}^{1/2}} \sum_{J \in \mathcal{I}^N_j} h^{(j)}\l(X_i, i \in J\r).
\] 
Our first goal is to find upper bounds for the moments $\mb E\sup_{h \in \mathcal{H}_m} \l\lvert V_{N,j}(h) \r\rvert^p$. To this end, let us recall the definition of the generic chaining complexity. Let $(T,d)$ be a pseudometric space and let $\alpha > 0$. The generic chaining complexity 
$\gamma_\alpha(T,d)$ is defined via
\begin{align}
\label{eq:gamma_alpha}
    \gamma_\alpha(T, d) = \inf_\mathcal{T} \sup_{t \in T} \sum_{j=0}^\infty 2^{j/\alpha} d(t, P_j)\,,
\end{align}
where the infimum is taken over all sequences of subsets $\mathcal{T} := \{T_0,T_1, T_2, T_3, \ldots\}$ such that $T_j\subset T_{j+1}$ and $\mathrm{card}(T_0)=1$, $\mathrm{card}(T_j)\leq 2^{2^j}$ for all $j\geq 1$ \citep{talagrand2014upper}. Moreover, for $l \geq 1$, let
\begin{align}
\label{eq:gamma_alpha_l}
    \gamma_{\alpha, l}(T, d) = \inf_\mathcal{T} \sup_{t \in T} \sum_{j \geq l}^\infty 2^{j/\alpha} d(t, T_j)\,.
\end{align}
That is, $\gamma_{\alpha, l}(T, d)$ is the complexity evaluated on the ``tail'' of the chain \citep{dirksen2015tail}. It is easy to see that $\gamma_{\alpha, l}(T, d) \leq \gamma_{\alpha}(T, d)$ for all $l \geq 1$ and that $\gamma_{\alpha_1}(T, d)\leq \gamma_{\alpha_2}(T, d)$ for $\alpha_1\geq \alpha_2$. 
Computable upper bounds for the functional $\gamma_{\alpha}(T, d)$ are often expressed in terms of  the metric entropy defined as follows. Let the \emph{covering number} $N(T,d,\eps)$ be the smallest $M\in \mb N$ such that there exists a subset 
$F\subseteq T$ of cardinality $M$ with the property that for all $z\in T$, $d(z,F)\leq \eps$. The corresponding metric entropy $H(T,d,\eps)$ is given by the natural logarithm of $N(T,d,\eps)$. 
We will often be interested in the functionals $\gamma_\alpha(T,d)$ evaluated with respect to (random) distances $d^{(1)}_{N,j}(\cdot,\cdot)$ and $d^{(2)}_{N,j}(\cdot,\cdot)$ defined on $\m H_m$ via
    \begin{align}\label{eq:d_N}
        d^{(1)}_{N,j}(h_1, h_2) = e \l[\frac{1}{{N \choose j}}\sum_{J \in \mathcal{I}^N_j} \l(\l(h^{(j)}_1 - h^{(j)}_2 \r)\l(X_i, i \in J\r)\r)^2\r]^{1/2},
    \end{align}
    and
    \begin{align}\label{eq:d_Nj}
        d^{(2)}_{N,j}(h_1, h_2) = \l[\frac{1}{\lfloor N/j \rfloor} \sum_{i=1}^{\lfloor N/j \rfloor} \l(\l(h_1^{(j)} - h_2^{(j)}\r)\l(X_{(i-1)j + 1} , \ldots , X_{ij}\r)\r)^2 \r]^{1/2}.
    \end{align}
Finally, everywhere below, we will denote 
\[
\Sigma^2_{\m H_m} = \sup_{h \in \mathcal{H}_m}\var\l(h((X_1,\ldots,X_m)\r) \text{ and } \sigma^2_{\m H_m}=\sup_{h \in \mathcal{H}_m} \var(h^{(1)}(X)).
\]
Let us recall here that $m\sigma^2_{\m H_m}\leq \Sigma^2_{\m H_m}$ \citep[see][]{lee2019u}.
\begin{theorem}
\label{thm: thm2}
   There exists an absolute constant $C>0$ such that for every integer $1\leq j \leq m$ and real number $p \geq 2$,
    \begin{align*}
    \mb E^{1/p} \sup_{h \in \mathcal{H}_m} \lvert V_{N,j}(h) \rvert^p  \leq C^j\max\l\{A_1(p,j), A_2(p,j), A_3(p,j)\r\},  
\end{align*}
where
\begin{align*}
    A_1(p,j) &=  p^{j/2} \Sigma_{\m H_m} \,, 
    \\
    \\
    A_2(p,j) &= p^{j/2}\l(\frac{m}{j}\r)^{j/2}\l(\frac{j}{N}\r)^{1/4} \l( p\sqrt{\frac{j}{N}}\bigvee \mb E \gamma_2 (\mathcal{H}_m, d^{(2)}_{N,j})\r)^{1/2}
    \\
    A_3(p,j) & =  \mb E^{1/p} \l(\gamma_{2/j, \lfloor \log_2 (pj)\rfloor}\l(\mathcal{H}_m, d^{(1)}_{N,j}\r) \r)^p  \,.\\
\end{align*}
\end{theorem}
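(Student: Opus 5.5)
Our plan is to reduce the degenerate U-process $V_{N,j}$ to a Rademacher chaos process and then control the latter by a hypercontractive (Bonami-type) chaining argument in the spirit of \cite{dirksen2015tail}. Since $h^{(j)}$ is completely degenerate, the classical randomization inequality for degenerate U-processes \citep{de2012decoupling} lets us replace $\sum_{J\in\mathcal I^N_j} h^{(j)}(X_i,i\in J)$ by the Rademacher chaos $\sum_{J}\prod_{i\in J}\eps_i\, h^{(j)}(X_i,i\in J)$. We would use Sherman's symmetrization rather than decoupling, so that the loss is only a factor $C^j$; this factor is absorbed into the leading constant. After this step we condition on $X_1,\ldots,X_N$ and work with a homogeneous Rademacher chaos of order $j$ indexed by $\mathcal H_m$.

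Conditionally on the data, Bonami's inequality gives $\|\sum_J \eps_J a_J\|_q\le (q-1)^{j/2}\|a\|_2$. Hence the increments of the process $h\mapsto \binom{m}{j}^{1/2}\binom{N}{j}^{-1/2}\sum_J\eps_J h^{(j)}(X_J)$ have $L_q$ norm at most $q^{j/2}\sqrt{\binom mj}\, d^{(1)}_{N,j}(h_1,h_2)/e$, i.e. they have $\psi_{2/j}$-type tails in the metric $d^{(1)}_{N,j}$. We then run the generic chaining in its $L_p$ form (Dirksen's Theorem 3.2). Fix an admissible sequence, start the chain at level $l=\lfloor\log_2(pj)\rfloor$, and split
\[
\sup_h|V(h)|\le |V(\pi_l h)| + \sum_{k\ge l}|V(\pi_{k+1}h)-V(\pi_k h)|.
\]
The increments at levels $k\ge l$ are handled by a union bound at moment $q=2^k$. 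This produces the tail functional $\gamma_{2/j,l}(\mathcal H_m,d^{(1)}_{N,j})$, which after integrating over the data gives $A_3$. The head part $\sup_{h}|V(\pi_l h)|$ ranges over at most $2^{2^l}\approx e^{cpj}$ points, so its $L_p$ norm is controlled by $p^{j/2}$ times $\mathbb E^{1/p}\sup_h \big(\binom{m}{j}^{1/2}\binom Nj^{-1/2}\|h^{(j)}(X_J)\|_{\ell_2}\big)^p$. The $\binom{m}{j}$ normalization needs care here, and we would keep $\binom mj\le (em/j)^j$ explicit.

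It remains to bound the random empirical $L_2$ size $\sup_h \binom Nj^{-1}\sum_J (h^{(j)})^2(X_J)$, a nonnegative U-process of order $j$. We split it as mean plus fluctuation. The mean contributes $\mathbb E (h^{(j)})^2$, and $\binom mj\mathbb E(h^{(j)})^2\le\var(h)\le\Sigma^2_{\mathcal H_m}$ by orthogonality of the Hoeffding decomposition; this yields $A_1$. For the fluctuation we use Hoeffding's representation of a U-statistic as an average over permutations of block sums of $\lfloor N/j\rfloor$ independent blocks. By Jensen/convexity, its $p$-th moment is bounded by that of the i.i.d. block empirical process $\frac{1}{\lfloor N/j\rfloor}\sum_i (h^{(j)})^2(X_{(i-1)j+1},\ldots,X_{ij})$. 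Since $\|h^{(j)}\|_\infty\le 2^j$, Talagrand/Bousquet-type moment bounds for this bounded empirical process give a deviation of order $\sqrt{j/N}\,(p\sqrt{j/N}\vee \mathbb E\gamma_2(\mathcal H_m,d^{(2)}_{N,j}))$ times $C^j$ (using the contraction $f\mapsto f^2$). Taking the square root and multiplying by $p^{j/2}(m/j)^{j/2}$ produces exactly $A_2$.

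We expect the main obstacle to be this last step: controlling the square-process uniformly with only a $C^j$ loss, and correctly threading the $\binom mj$ factors so that the mean part collapses to $\Sigma_{\mathcal H_m}$ while the fluctuation part keeps the $(m/j)^{j/2}(j/N)^{1/4}$ scaling. Matching the chaining cutoff $l=\lfloor\log_2(pj)\rfloor$ with the cardinality of the head set is comparatively routine.
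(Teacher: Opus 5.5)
Your outline follows the paper's proof almost step for step. Both use Sherman's symmetrization to a conditional Rademacher chaos, then Bonami plus Dirksen-type chaining cut at $l=\lfloor\log_2(pj)\rfloor$, then reduce the head term to the empirical $L_2$-size via Hoeffding's permutation representation with i.i.d.\ blocks, and finish with a moment bound plus symmetrization, contraction and chaining for the block process. Your derivation of $A_1$ and $A_2$ is sound. Using Sherman's inequality without a reference kernel is also legitimate, since for completely degenerate kernels it holds in uncentered form with constant $2^j$; this spares the paper's separate term $2^p\mathbb{E}|V_{N,j}(h_0)|^p$.

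The step that fails is the claim that the tail of the chain gives $A_3$. You computed yourself that the increments have $L_q$-norm at most $q^{j/2}\binom{m}{j}^{1/2}d^{(1)}_{N,j}(h_1,h_2)/e$. So the chaos is $\psi_{2/j}$ with respect to $\binom{m}{j}^{1/2}d^{(1)}_{N,j}$, not $d^{(1)}_{N,j}$. Since $\gamma_{2/j,l}(T,c\,d)=c\,\gamma_{2/j,l}(T,d)$, chaining yields $\binom{m}{j}^{1/2}\,\mathbb{E}^{1/p}\bigl(\gamma_{2/j,l}(\mathcal{H}_m,d^{(1)}_{N,j})\bigr)^p$. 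That is $A_3$ times a factor of order $(m/j)^{j/2}$, which cannot be absorbed into $C^j$ once $m$ grows. The paper's own proof silently drops the same factor. It applies its chaining theorem with $d^{(1)}_{N,j}$ to the chaos with coefficients $t_J=\binom{m}{j}^{1/2}\binom{N}{j}^{-1/2}(h^{(j)}-h_0^{(j)})(X_J)$. The natural metric of that chaos, $e\bigl(\sum_J(t_J-s_J)^2\bigr)^{1/2}$, equals $\binom{m}{j}^{1/2}d^{(1)}_{N,j}$, and the paper does keep $\binom{m}{j}$ in the head term.

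More careful bookkeeping cannot fix this, because the inequality with $A_3$ as written is false.
\begin{itemize}
\item Take $j=1$, $p=2$, $X$ uniform on $S^{d-1}$, and $h_u(x_1,\ldots,x_m)=\frac1m\sum_{k=1}^m\langle x_k,u\rangle$ for $u\in S^{d-1}$. The kernels are symmetric, $\sup_u\|h_u\|_\infty=1$, $h_u^{(1)}(x)=\langle x,u\rangle/m$ and $\Sigma^2_{\mathcal{H}_m}=1/(md)$.
\item Then $\sup_u|V_{N,1}(h_u)|=(mN)^{-1/2}\bigl\|\sum_{i=1}^N X_i\bigr\|$, so the left-hand side equals $m^{-1/2}$.
\item Since $\|X_i\|=1$, we have $d^{(1)}_{N,1}=e\,d^{(2)}_{N,1}\le \frac{e}{m}\|u-v\|$. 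Hence both chaining functionals are at most $c\sqrt{d}/m$, and $A_1=\sqrt{2/(md)}$, $A_2\le 2\sqrt{m/N}+c\,(d/N)^{1/4}$, $A_3\le c\sqrt{d}/m$.
\item With $m=d^2$ and $N=d^{10}$, the right-hand side is $O(d^{-3/2})$ while the left-hand side is $d^{-1}$. This is a contradiction as $d\to\infty$.
\end{itemize}

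What your argument (and the paper's) actually proves is the theorem with $A_3$ replaced by $\binom{m}{j}^{1/2}A_3\le (em/j)^{j/2}A_3$, i.e.\ with $d^{(1)}_{N,j}$ rescaled by $\binom{m}{j}^{1/2}$. In the example this gives $c\sqrt{d/m}\ge m^{-1/2}$, as it must. The correction is immaterial for fixed $m$. When $m\to\infty$, however, it propagates to $B_2$ in Corollary~\ref{corollary:1} and to the remainder bounds built on it.
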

Let us draw some parallels with moment bounds for suprema of empirical processes \citep[Theorem 12]{boucheron2005moment}: there, we expect to see a ``complexity term'' that controls the size of the first moment as well as the terms that describe the growth of moments with respect to $p$. In our case, the role of the complexity term is played by $A_3(p,j)$ while the moment growth is controlled by $A_1(p,j)$ and $A_2(p,j)$. 
The proof of the theorem relies on new moment inequalities for homogeneous Rademacher chaos processes of arbitrary order that we present in appendix \ref{sec:hypercontractive}.

To make the bounds of Theorem \ref{thm: thm2} useful in applications, we need to find explicit bounds for the complexity terms. We accomplish this task for the so-called Euclidean classes \citep{nolan1987u} that are frequently encountered in statistical applications, although our general results can be applied beyond this case. Assume that $F_{\m H_m}$ is such that $|h(x_1,\ldots,x_m)|\leq F_{\m H_m}(x_1,\ldots,x_m)$ for all $x_1,\ldots,x_m\in S$ (since we make the boundedness assumption, we can set $F_{\m H_m}\equiv 1$). Then $\m H$ is Euclidean with exponent $V$ if there exists $A,C>0$, $V\geq 1$ such that for every measure $Q$, 
\ben{
\label{eq:euclidean}
N\l(\m H_m,\|\cdot\|_{L_2(Q)},\eps\r)\leq C\l( \frac{A\l\| F_{\m H_m} \r\|_{L_2(Q)}}{\eps}\r)^V.
}
The most prominent example of Euclidean classes are the ``VC-subgraph'' classes \citep[see Theorem 2.6.7 in][]{wellner2013weak}. For Euclidean classes, the moments $\mb E \gamma_{2} \l(\mathcal{H}_m, d^{(2)}_{N,j}\r)$ and $ \l(\mb E \l(\gamma_{2/j} (\mathcal{H}_m, d^{(1)}_{N,j}) \r)^p\r)^{1/p}$ admit explicit upper bounds presented in Lemma \ref{lemma:1} below. Define 
\begin{multline}
\label{eq:gamma}
 \Gamma(j,m,N,V) =  \Sigma_{\m H_m} V^{1/2}\l(\frac{j}{m}\r)^{j/2} \log^{1/2} \l(\frac{C_1^j}{\Sigma_{\m H_m}^2}\l(\frac{m}{j}\r)^j\r)
 \\
 \bigvee  V\l(\frac{j}{N}\r)^{1/2} \log\l(\frac{C_1'}{\Sigma_{\m H_m}^2}\l(\frac{m}{j}\r)^j\r)
\end{multline}
and 
\ben{
\label{eq:D(p,j)}
D(p,j):=D(p,j,m,N) = \l[\frac{pj}{N}\bigvee \frac{Vj}{N}\log\l(\frac{C_1'}{\Sigma_{\m H_m}^2}{m \choose j}\r)\bigvee \Sigma^2_{\m H_m}\l(\frac{j}{m}\r)^{j} \r]^{1/2}.
}
\begin{remark}
    The quantity $\Sigma_{\m H_m}$ can be replaced by its upper bound in the expressions for $\Gamma(j,m,N,V)$ and $D(p,j)$. 
\end{remark}
\noindent The following result follows from the combination of Lemma \ref{lemma:1} and Theorem \ref{thm: thm2}. 
\begin{corollary}
\label{corollary:1}
Let $\m H_m$ be a Euclidean class with exponent $V$. Then for any $p \geq 2$ and $1 \leq j \leq m$,   
    \begin{align*}
    \l(\mb E \sup_{h \in \mathcal{H}_m} \lvert V_{N,j}(h) \rvert^p \r)^{1/p}  \leq C^j\max\l\{B_1(p,j), B_2(p,j)\r\} 
\end{align*}
where 
    \begin{align*}
    B_1(p,j) &= p^{j/2}\l(\frac{m}{j}\r)^{j/2} D(p,j)\,,
    \\
    B_2(p,j) &= 
    \l(C_1V \l[j\log\l(\frac{C_2m}{j}\r) \bigvee \log\l( \frac{e}{\Sigma_{\m H_m}}\r)\r]\r)^{j/2} D(p,j).
    \end{align*}
    Moreover, if $p\geq 4V$, then $B_2(p,j)$ can be chosen as
    \[
    B_2(p,j) = ( pj)^{j/2} e^{-pj/V}.
    \]
\end{corollary}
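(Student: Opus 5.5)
The plan is to start from Theorem~\ref{thm: thm2} and bound each of $A_1(p,j)$, $A_2(p,j)$, $A_3(p,j)$ by $C^j\max\{B_1(p,j),B_2(p,j)\}$, using the entropy estimates for Euclidean classes from Lemma~\ref{lemma:1}. The term $A_1$ is immediate. Since $\Sigma_{\m H_m}\leq \Sigma_{\m H_m}(m/j)^{j/2}(j/m)^{j/2}$, we have $A_1(p,j)=p^{j/2}\Sigma_{\m H_m}\leq p^{j/2}(m/j)^{j/2}D(p,j)$, because $D(p,j)\geq \Sigma_{\m H_m}(j/m)^{j/2}$. Thus $A_1\leq B_1$.

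For $A_2$, insert the bound of Lemma~\ref{lemma:1} for $\mb E\gamma_2(\m H_m,d^{(2)}_{N,j})$, which for a Euclidean class is of the Dudley type $\sqrt{V}\,\delta\log^{1/2}(e/\delta)$. Here $\delta$ is the sup of the $L_2$ diameters, controlled by $\Sigma_{\m H_m}(j/m)^{j/2}$ up to $C^j$, plus a fluctuation term of order $\sqrt{j/N}$ times a log. The result should essentially be $\Gamma(j,m,N,V)$. Then
\[
\Big(\tfrac{j}{N}\Big)^{1/4}\Big(p\sqrt{j/N}\vee \Gamma\Big)^{1/2}\leq \Big(\tfrac{pj}{N}\Big)^{1/2}\vee \Big(\sqrt{j/N}\,\Gamma\Big)^{1/2}.
\]
Each piece of $\sqrt{j/N}\,\Gamma$ is at most a constant times $D(p,j)^2$ by the AM--GM inequality. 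The first piece is $ab$ with $a=\sqrt{Vj\log(\cdot)/N}$ and $b=\Sigma(j/m)^{j/2}$, and $ab\leq a^2\vee b^2$. The second piece is already $\frac{Vj}{N}\log(\cdot)$. Here one uses $\log({m\choose j})\asymp j\log(m/j)$ up to constants absorbed in $C^j$. Hence $A_2\leq C^j p^{j/2}(m/j)^{j/2}D(p,j)=C^jB_1$.

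The main work is $A_3$, the tail of the chain for $\gamma_{2/j}$ with the random metric $d^{(1)}_{N,j}$. I would bound $\gamma_{2/j,l}$, with $l=\lfloor\log_2(pj)\rfloor$, by the entropy integral $\int_0^{\epsilon_l}H(\m H_m,d^{(1)}_{N,j},\eps)^{j/2}\,d\eps$, where $\epsilon_l$ is the radius at level $l$. To do this:
\begin{itemize}
\item Use that $h\mapsto h^{(j)}$ is a bounded linear map in $L_2(Q)$ for the empirical $U$-measure with norm at most $2^j$, so that $H(\m H_m,d^{(1)}_{N,j},\eps)\leq V\log(C^jA/\eps)$.
\item Bound the random diameter $\sup_h d^{(1)}_{N,j}(h,0)$ in $L_p$ by $(m/j)^{j/2}D(p,j)$ up to $C^j$, via the Lemma~\ref{lemma:1} moment estimate. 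This uses normalization by $\sqrt{{m\choose j}}$.
\end{itemize}
The integral $\int_0^{D}\big(V\log(C^j/\eps)\big)^{j/2}d\eps$ is at most $D\,\big(C V[j\log(Cm/j)\vee\log(e/\Sigma)]\big)^{j/2}$ when evaluated at $D\gtrsim \Sigma(j/m)^{j/2}$, which gives $B_2$.

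For the refinement when $p\geq 4V$: the chain starts at level $l$, where $2^{2^l}\geq 2^{pj/2}$ points are allowed. A net of size $(C^j/\eps)^V$ with $\eps\approx e^{-pj/V}$ already fits within this budget when $p\geq 4V$. Then only levels beyond $l$ contribute, each giving $2^{kj/2}\eps_k$ with $\eps_k\leq \exp(-2^k/V)$ relative to the diameter. Summing this geometric-type series from $k=l$ yields $(pj)^{j/2}e^{-pj/V}$ times the diameter bound $D(p,j)$.

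I expect the main obstacle to be the careful bookkeeping of the random diameter's $L_p$ norm together with the level-$l$ truncation. I would handle it by conditioning on the sample, applying the deterministic entropy bound, and only then taking $\mb E^{1/p}$.
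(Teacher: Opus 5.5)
Your treatment of $A_1$ and $A_2$ is the paper's. You use $D(p,j)\ge\Sigma_{\mathcal{H}_m}(j/m)^{j/2}$, insert $\mathbb E\gamma_2(\mathcal{H}_m,d^{(2)}_{N,j})\le C^j\Gamma(j,m,N,V)$, and apply $ab\le a^2\vee b^2$ to $\sqrt{j/N}\,\Gamma$. For $A_3$ you follow the route of Lemma~\ref{lemma:1}: bound by $\gamma_{2/j}$, use an entropy integral conditionally on the sample, and control the random diameter in $L_p$. One small correction: covering numbers are transferred through the mixture $Q_N=\frac{1}{j+1}\sum_{r=0}^{j}U^{(r)}_N\times P^{m-r}$, via $d^{(1)}_{N,j}(h_1,h_2)\le e\,2^j(j+1)\|h_1-h_2\|_{L_2(Q_N)}$. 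It is not that $h\mapsto h^{(j)}$ is bounded on a single $L_2(Q)$.

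The gap is in the bookkeeping of the diameter. Theorem~\ref{thm: thm2} is stated with the unnormalized metric $d^{(1)}_{N,j}$. Its diameter has $L_p$-norm at most $C^jD(p,j)$: divide \eqref{eq:h_j_square} by $\binom{m}{j}^{p/2}$, as in the proof of Lemma~\ref{lemma:1}. With that upper limit your integral does give $B_2$. You instead take the diameter to be $(m/j)^{j/2}D(p,j)$ ``by normalization by $\sqrt{\binom{m}{j}}$'', and then evaluate the integral as if its upper limit were $D(p,j)\gtrsim\Sigma_{\mathcal{H}_m}(j/m)^{j/2}$. Done consistently, your diameter gives about $(m/j)^{j/2}\bigl(CV[j\vee\log(e/\Sigma_{\mathcal{H}_m})]\bigr)^{j/2}D(p,j)$. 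That exceeds $B_2$ by roughly $(m/j)^{j/2}$ up to logarithms, and the factor simply vanishes in your last step.

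Your normalization is in fact the one the symmetrized chaos carries. Its coefficients are $\binom{m}{j}^{1/2}\binom{N}{j}^{-1/2}(h^{(j)}-h^{(j)}_0)(X_i,\,i\in J)$, so its canonical metric is $\binom{m}{j}^{1/2}d^{(1)}_{N,j}$. The proof of Theorem~\ref{thm: thm2} appears to drop this factor when it passes to $\gamma_{2/j,l}(\mathcal{H}_m,d^{(1)}_{N,j})$. A check: take $j=1$, $p=2$ and the half-space kernels of Section~\ref{sec:THL}. The linearization $h^{(1)}_u(x)\approx(2\pi m)^{-1/2}\langle x,u\rangle$ gives $\mathbb E\sup_u|V_{N,1}(h_u)|\approx\sqrt{d/(2\pi)}$. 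Meanwhile $\max\{B_1,B_2\}$ stays bounded when $m\gtrsim d\log^2 m$ and $dm^2\le cN$. So you have two consistent options. Either derive the corollary from Theorem~\ref{thm: thm2} exactly as stated, with diameter $C^jD(p,j)$. Or keep the normalization and accept $C^j\max\{B_1,\binom{m}{j}^{1/2}B_2\}$. You cannot keep it in the diameter and drop it in the conclusion.

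The refined case $p\ge 4V$ has a separate problem. You claim $(pj)^{j/2}e^{-pj/V}$ \emph{times} $D(p,j)$, using entropy numbers that decay like $e^{-2^k/V}$ relative to the diameter. The Euclidean condition \eqref{eq:euclidean} controls covering numbers only relative to the envelope norm $\|F_{\mathcal{H}_m}\|_{L_2(Q)}=1$. It gives no localized bound of the form $(C\Delta/\varepsilon)^V$ for a class of small diameter $\Delta$. For example, the singletons $\{1_{\{x_i\}}\}$ under the uniform measure on $n$ points have diameter $\sqrt{2/n}$, yet need $n$ balls at scale $(4n)^{-1/2}$.

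What is available is the absolute bound $e_k\le C^j2^{-2^k/V}$ on the entropy numbers. Summing $2^{kj/2}e_k$ over $k\ge\lfloor\log_2(pj)\rfloor$ gives the deterministic bound \eqref{eq:truncated_gamma_bound_2}, which carries no diameter factor. That is the form of $B_2$ in the statement, so $D(p,j)$ should simply be dropped. Also, $p\ge 4V$ is not what lets the level-$l$ net fit in the budget $2^{2^l}$. Its role is to make this tail sum comparable to its first term; that sum is the incomplete gamma integral of Lemma~\ref{lemma:incomplete-gamma-bound}, which requires $2^l\log 2/V\ge j$.
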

\begin{remark}
\label{remark:small-m}
The terms $A_1(p,j)$ and $A_2(p,j)$ in Theorem \ref{thm: thm2} are incorporated into $B_1(p,j)$ while $B_2(p,j)$ is an upper bound for $A_3(p,j)$. 
In the ``classical'' regime when $m$ is a constant that does not grow with $N$ and $V\log\l(\frac{1}{\Sigma_{\m H_m}}\r)\leq CN$, the expressions simplify significantly. Specifically, the resulting inequalities are
\begin{align*}    
\Gamma(j,m,N,V)& \leq C \Sigma_{\m H_m }V^{1/2}\log^{1/2}\l(\frac{e}{\Sigma_{\m H_m}}\r),
\\
D(p,j)& \leq C\l(\Sigma_{\m H_m } \bigvee \sqrt{\frac{p}{N}}\bigvee \sqrt{\frac{V}{N}\log\l(\frac{e}{\Sigma_{\m H_m}} \r)} \r)
\end{align*}
and
\begin{multline*}
\l(\mb E \sup_{h \in \mathcal{H}_m} \lvert V_{N,j}(h) \rvert^p \r)^{1/p}  
\leq 
C p^{j/2}
\l(\Sigma_{\m H_m}\bigvee \sqrt{\frac{p}{N}}\r)
\\
+C\l(V \log\l(\frac{e}{\Sigma_{\m H_m}}\r)\r)^{j/2}
\l(\Sigma_{\m H_m}\bigvee \sqrt{\frac{V}{N}\log\l(\frac{e}{\Sigma_{\m H_m}} \r)}\r).
\end{multline*}
Note that in the last inequality, the ``complexity term'' that is of order $V^{j/2}$ is decoupled from the $p$-dependent term that controls the growth of moments. 
One can equivalently rewrite the bound in terms of a deviation inequality as follows: for all $t\geq 2$, 
\begin{multline*}
\sup_{h \in \mathcal{H}_m} \lvert V_{N,j}(h) \rvert  
\leq 
C t^{j/2}
\l(\Sigma_{\m H_m}\bigvee \sqrt{\frac{t}{N}}\r)
\\
+C\l(V \log\l(\frac{e}{\Sigma_{\m H_m}}\r)\r)^{j/2}
\l(\Sigma_{\m H_m}\bigvee \sqrt{\frac{V}{N}\log\l(\frac{e}{\Sigma_{\m H_m}} \r)}\r)
\end{multline*}
with probability at least $1-e^{-t}$.
\end{remark}
We will rely on Corollary \ref{corollary:1} in order to establish the sufficient conditions guaranteeing that the ``remainder'' term in Hoeffding's decomposition $\sup_{h\in \m H_m} \l[\sum_{j=2}^m \frac{{m \choose j}}{{N \choose j}} \l| \sum_{J \in \mathcal{I}^N_j} h^{(j)}\l(X_i,\  i \in J\r)\r|\r]$ is negligible compared to the H\'{a}jek's projection given by $\frac{m}{N} \sum_{j=1}^N h^{(1)}\l(X_j\r)$. Specifically, in Theorem \ref{thm:Bernstein-bound} given below, we only require that $\frac{m^2}{N}$ is not too large. 
This is an improvement over the earlier work by \citet{heilig2001limit}, where each term of order $j\geq 2$ is shown to be negligible when $m^2$ is small compared to $N$ but the sum of these terms is only guaranteed to be negligible if $m^3/N$ is small.

\begin{theorem}
\label{thm:Bernstein-bound}
    Let $\mathcal{H}_m$ be a Euclidean class with exponent $V$ consisting of measurable permutation-symmetric functions. There exist absolute constants $c,C>0$ with the following properties. If $\max(V,t)\frac{m^2}{N}\leq c$, then for all $0<\eps\leq 1$
    \begin{multline*}
    \sup_{h\in \m H_m} \l| U_{N,m}(h) - \mb E U_{N,m}(h) \r| 
    \leq (1+\eps)\mb E\sup_{h\in \m H_m}\l|\frac{m}{N} \sum_{i=1}^N h^{(1)}\l(X_i\r)\r| 
    \\
    + \sqrt{2t m\sigma^2_{\m H_m}}\sqrt{\frac{m}{N}} +C\l(\frac{1}{\eps}+\sqrt{\frac{Vm^2\log(m)}{N}}\r)\frac{tm}{N}
    +\frac{CV}{N}\log^{3/2}(m)
\end{multline*}
with probability at least $1-2e^{-t}$.
\end{theorem}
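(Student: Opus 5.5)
Split the deviation using Hoeffding's decomposition \eqref{eq:h-decomposition}:
\[
\sup_{h}\l|U_{N,m}(h)-\mb EU_{N,m}(h)\r| \leq \sup_h\l|\frac{m}{N}\sum_{i=1}^N h^{(1)}(X_i)\r| + \sum_{j=2}^m \frac{{m\choose j}^{1/2}}{{N\choose j}^{1/2}}\sup_h |V_{N,j}(h)|.
\]
I will bound the two pieces separately, each on an event of probability at least $1-e^{-t}$, and then take a union bound.

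\textbf{Hájek projection.} The first term is the supremum of an empirical process indexed by $\{h^{(1)}\}$. The summands satisfy $\|h^{(1)}\|_\infty\leq 2$, and their variance is at most $\sigma^2_{\m H_m}$. Bousquet's version of Talagrand's inequality gives, with probability at least $1-e^{-t}$,
\[
Z\leq \mb EZ+\sqrt{2t(N\sigma^2+4\mb EZ)}+\frac{4t}{3},
\qquad Z=\sup_h\l|\sum_{i} h^{(1)}(X_i)\r|.
\]
I then use $\sqrt{8t\,\mb EZ}\leq \eps\,\mb EZ+2t/\eps$. Multiplying through by $m/N$ yields
\[
(1+\eps)\,\mb E\sup_h\l|\frac mN\sum_i h^{(1)}(X_i)\r| + \sqrt{2tm\sigma^2_{\m H_m}}\sqrt{\frac mN}+\frac{C}{\eps}\frac{tm}{N}.
\]
This produces the leading terms of the statement together with the $1/\eps$ part of the remainder.

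\textbf{Higher-order terms.} For each $j\geq 2$ I apply Corollary \ref{corollary:1} with $p=t+\log$-type correction. Concretely, I take $p_j=t+j$ (or $t+2\log j$) so that the union bound over $j$ sums to $e^{-t}$. Markov's inequality then gives $\sup_h|V_{N,j}(h)|\leq eC^j\max(B_1,B_2)$ with probability $1-e^{-p_j}$. The prefactor satisfies $({m\choose j}/{N\choose j})^{1/2}\lesssim (m/N)^{j/2}$ when $m\leq N/2$.

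Combining this with $B_1=p^{j/2}(m/j)^{j/2}D(p,j)$, the $j$-th term is roughly
\[
\Big(\frac{C p\, m^2}{jN}\Big)^{j/2}\,\frac{D}{\sqrt m}.
\]
Under the hypothesis $\max(V,t)m^2/N\leq c$, the ratio $Cpm^2/(jN)$ is small, so the terms decay geometrically in $j$. The sum is therefore dominated by $j=2$, giving a contribution of order $\frac{tm}{N}D(p,2)$.

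Using $D(p,2)\lesssim \sqrt{t/N}+\sqrt{(V/N)\log(m^2/\Sigma^2)}+\Sigma\cdot 2/m$ and $\Sigma_{\m H_m}\leq 1$, I arrive at a bound of the form
\[
\frac{tm}{N}\Big(1+\sqrt{\tfrac{Vm^2\log m}{N}}\Big) + \text{lower order}.
\]
For the $B_2$ branch, at $j=2$ the bound is $(m/N)\,V\log(m)\,D$, which produces the $\frac{CV}{N}\log^{3/2}m$ term once $D\lesssim\sqrt{V\log m/N}$ is used.

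\textbf{Main obstacle.} I expect the main difficulty to be the $\log(1/\Sigma_{\m H_m})$ factors in $B_2$ and $D$ when $\Sigma_{\m H_m}$ is tiny. I would handle these by noting that the term $\Sigma^2(j/m)^j$ inside $D$ offsets them. A cleaner alternative, per the remark after \eqref{eq:D(p,j)}, is to replace $\Sigma$ by an upper bound such as $\max(\Sigma,1/m)$, so that all logarithms become $\log m$. A second difficulty is checking that the geometric summation is uniform over $j\leq m$, with the $C^j$ constants absorbed by choosing $c$ small. For large $j$ one should use the $p\geq 4V$ form of $B_2$, or simply the crude bound $j^{j/2}\leq$ compensated by $(m^2/N)^{j/2}$.
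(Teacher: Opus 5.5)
Your proposal is correct and follows the paper's proof: the Hoeffding decomposition, Bousquet's inequality for the H\'ajek projection with $\sqrt{8t\,\mb EZ}\le \eps\,\mb EZ+2t/\eps$, and Corollary~\ref{corollary:1} with geometric summation in $j$ for the degenerate terms. The one structural difference is that the paper bounds $\mb E^{1/p}$ of the whole remainder by Minkowski's inequality and then applies Markov once at $p=t$ (Lemma~\ref{lemma:higher-order}); this avoids your per-$j$ exponents $p_j=t+j$ and the union bound over $j$, although yours also works, since $(t+j)/j\le t+1$ keeps $Cp_jm^2/(jN)$ small.

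One bookkeeping fix is needed. The $j$-th term is $\bigl(Cpm^2/(jN)\bigr)^{j/2}D(p,j)$, with no factor $1/\sqrt m$, so the $j=2$ term is of order $\frac{tm^2}{N}D(t,2)$, not $\frac{tm}{N}D(t,2)$. After replacing $\Sigma_{\m H_m}$ by its upper bound $1$ you have $D(t,2)\lesssim \sqrt{t/N}+\sqrt{V\log(m)/N}+1/m$, and it is the product $\frac{tm^2}{N}D(t,2)$ that gives your final $\frac{tm}{N}\bigl(1+\sqrt{Vm^2\log(m)/N}\bigr)$, with the ``$1$'' coming from the $1/m$ part.
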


\begin{remark}
\label{remark:explicit}
Let us emphasize the sharp constants in the first two terms on the right-hand side of the display above that are inherited from the concentration inequality for empirical processes due to \citet{bousquet2002bennett}. While Corollary \ref{corollary:1} provides an upper bound for $\mb E\sup_{h\in \m H_m}\l|\frac{m}{N} \sum_{i=1}^N h^{(1)}\l(X_i\r)\r|$, 
it  can be crude in situations where sharp absolute constants are of essence. The variance term $m\sigma^2_{\m H_m}$ is also sharp (as opposed to $\Sigma^2_{\m H_m}$) and is exactly the quantity driving the asymptotic behavior of $U_{N,m}(h)$. The term $\frac{CV}{N}\log^{3/2}(m)$ is an upper bound for the expected supremum of higher-order terms in Hoeffding's decomposition and is typically of smaller order compared to $\mb E\sup_{h\in \m H_m}\l|\frac{m}{N} \sum_{i=1}^N h^{(1)}\l(X_i\r)\r|$. 
Finally, when $m:=m(N)$ is increasing and $\inf_{m\geq 1} m\sigma^2_{\m H_m}>0$, it is easy to see that the term $C\l(\frac{1}{\eps}+\sqrt{\frac{Vm^2\log(m)}{N}}\r)\frac{tm}{N}$ is negligible compared to $ \sqrt{2t m\sigma_{\m H_m^2}}\sqrt{\frac{m}{N}}$ for $t\leq c \frac{N}{m^2}$.
\end{remark}

\section{Statistical applications: Hodges-Lehmann estimator in $\mb R^d$}
\label{sec:THL}

In this section, we apply previously established inequalities for U-processes to the problem of robust location estimation. Our estimator is based on the notion of half-space depth and the associated half-space median, also known as ``Tukey's median'' (see \citep{hodges1955bivariate,tukey1975mathematics,donoho1992breakdown}) as well as the construction by \cite{hodges1963estimates} devised to improve efficiency of the standard sample median and rooted in U-statistics. 

Half-space depth of a point $z\in \mb R^d$ with respect to a probability measure $P$ is defined as  
\begin{equation*}
D_P(z)=\inf_{u\in S^{d-1}} P H(z,u),
\end{equation*}
where $H(z,u)=\{x\in \mathbb{R}^d:\ (x-z)^T u \geq 0\}$  is the half-space passing through $z$ in direction $u$, and $S^{d-1}$ is the unit sphere in $\mb R^d$ with respect to the Euclidean norm. 
The set of points having maximal depth 
\[
R_\ast:=\arg\max_{z\in \mb R^d} D_P(z)
\]
is referred to as the ``median set,'' while the barycenter $\mu_\ast$ of this (convex) set is called Tukey's median. When $d=1$, it coincides with the usual median.  Given an i.i.d. sample $X_1,\ldots,X_N$ from distribution $P$, define the empirical distribution $P_N:=\frac{1}{N}\sum_{j=1}^N \delta_{X_j}$. The associated depth $D_N(z):=D_{P_N}(z)$ is called the empirical halfspace depth, and, similarly, the associated median $\wt\mu_{N,m}$ will be referred to as the \emph{empirical Tukey's median}. 
We refer the reader the excellent overview by \cite{nagy2022halfspace} of the background on half-space depth and its connections to other areas in geometry. 

Robustness properties of Tukey's median are closely related to existence of points with high depth. Generally, maximal depth of a distribution in $\mb R^d$ can be as low as $\frac{1}{d+1}$, but it is much higher for distributions satisfying certain geometric constraints, such as log-concavity or half-space symmetry: in the latter case, the maximal depth equals $\frac{1}{2}$. 
In this section, we assume that the underlying distribution is elliptically symmetric: this class includes families such as Gaussian and multivariate t-distributions. In particular, Tukey's median corresponding to such distributions has maximal possible depth of $\frac{1}{2}$. We provide the definitions and the necessary background on elliptical symmetry in section \ref{sec:elliptical-symmetric-distribution}. 
When the underlying distribution is Gaussian, Tukey's median is an inefficient estimator of the mean already when $d=1$. To address this shortcoming, \cite{hodges1963estimates} proposed an approach that reduces the asymptotic variance at the cost of decreased breakdown point.  
Specifically, assume that $X_1,\ldots,X_N$ are i.i.d. $\mb R$-valued random variables, $2\leq m < N$ and, given $J\subseteq [N]$ of cardinality $|J|=m$, set $\bar X_J:= \frac{1}{m}\sum_{j\in J}X_j$. Define 
\be{
\wh\mu_{N,m} : =\mathrm{median}\l(\bar X_J, \ J\in \m I_{m}^{(N)}\r).
}
where $\l\{\bar X_J, \ J\in \m I_{m}^{(N)}\r\}$ denotes the set of sample averages computed over all possible subsets of $[N]$ of cardinality $m$, and $\mathrm{median}(\cdot)$ is the usual univariate median. 
Classical Hodges-Lehmann estimator corresponds to the case $m=2$. When $m$ is a fixed integer greater than $2$, $\wh\mu_{N,m}$ is known as the \emph{generalized Hodges-Lehmann estimator}. Its asymptotic properties are well known and can be deduced from results by \citet{serfling1984generalized}, among other works. For example, its breakdown point is $1-(1/2)^{1/m}$ and, in case of normally distributed data with variance $\sigma^2$, the asymptotic distribution of $\sqrt{N}(\wh\mu_{N,m}-\mu)$ is centered normal with variance $\Delta_m^2 = m\sigma^2\arctan\l( \frac{1}{\sqrt{m^2-1}}\r)$. It leads to a natural conjecture that when $m$ is a growing function of $N$, the asymptotic variance of $\sqrt{N}(\wh\mu_{N,m}-\mu)$ equals $\sigma^2$. \cite{diciccio2022clt} rigorously confirmed this intuition, and \cite{minsker2023u} extended it beyond the case of normal distributions and established non-asymptotic guarantees for $\wh \mu_{N,m}$. 

Using the fact that the sample median has the highest (univariate) depth, one can equivalently define $\wh\mu_{N,m}$ as follows: let $P_{N,m}$ be the empirical measure corresponding to the set of points $\l\{ \bar X_J, \ J\in \m I_{m}^{(N)}\r\}$. Then 
\[
\wh\mu_{N,m} = \arg\max_{z\in \mb R^d} D_{P_{N,m}}(z),
\]
where we implicitly assume that $\wh\mu_{N,m}$ is a barycenter of the $\arg\max$ set when the latter contains more than one element. This definition extends to the multivariate case verbatim: stated explicitly, 
\begin{align}
\label{eq:tukey}
    \wh\mu_{N,m} \in \arg\max_{z\in \mb R^d}\inf_{u\in S^{d-1}} \frac{1}{{N \choose m}} \sum_{J \in \mathcal{I}^N_m} I\left\{ \dotp{\bar X_J-z}{u}\geq 0\right\}.
\end{align}
Let us mention that other multivariate extensions of Hodges-Lehmann estimator have been studied before: for instance, \cite{chaudhuri1992multivariate} investigated the approach based on spatial quantiles and established asymptotic normality of resulting estimators. 

It is known that the asymptotic distribution of the (usual) Tukey's median $\wt\mu_{N}$ is normal only when $d=1$. When $d\geq2$, it was shown in a sequence of papers by \cite{nolan1999min, bai1999asymptotic, masse2002asymptotics} that  
\begin{equation}
\label{eq:asymp-tukey}
 \sqrt{N}\l( \wt\mu_{N} - \mu \r) \xrightarrow[]{d}  \arg\max_{z\in \mb R^d} \inf_{u\in S^{d-1}} \l[W(u) - \dotp{u}{z} p_u(0)\r]
\end{equation}
where $W(u)$ is a centered Gaussian process with covariance function  
\[
\cov(W(u), W(u')) = \mb E\l(I\{X\in H(0,u)\}-\frac12\r)\l(I\{X\in H(0,u')\}-\frac12\r),
\]
where $X\sim P$ and $p_u(0)$ is the density of the one-dimensional projection $\dotp{X-\mu}{u}$ evaluated at $0$. Somewhat surprisingly, we show in Theorem \ref{thm:normality} that when $m(N)\to\infty$ as $N\to\infty$ at a specified rate, then the Hodges-Lehmann version of Tukey's median satisfies 
\[
\sqrt{N}\l( \wh\mu_{N,m} - \mu \r) \xrightarrow[]{d} \m N(0,\Sigma)
\]
where $\Sigma$ is the covariance matrix of the underlying distribution, so that $\wh\mu_{N,m}$ is asymptotically equivalent to the sample mean. Moreover, as we demonstrate in Theorem \ref{th:tukey-nonasymp}, it also satisfies strong non-asymptotic deviation guarantees. 

Let us mention another useful property of Tukey's median, namely, affine equivariance. For any affine map $z\mapsto T(z) = Mz+b$ such that $M\in \mb R^{d\times d}$ is non-singular and $b\in \mb R^d$, 
\[
\mu_\ast(P\circ T^{-1}) = M \mu_\ast(P) + b,
\]
where $P\circ T^{-1}(A) = P\l( T^{-1}(A)\r)$. For example, if $X$ has mean $\mu$ and the covariance matrix $\Sigma = \mb E(X-\mu)(X-\mu)^T$ that is non-degenerate, then 
\[
\wh\mu_{N,m}(X_1,\ldots,X_N) = \Sigma^{1/2}\wh\mu_{N,m}(Z_1,\ldots,Z_N) + \mu, 
\]
where $Z_j = \Sigma^{-1/2}(X_j - \mu)$ has centered isotropic distribution. 
Due to this fact, everywhere below we will assume that distribution $P$ of $X_1,\ldots,X_n$ belongs to the class $\mathcal{E}(\mu, I_d, F)$ of isotropic elliptically symmetric distributions such that $\mb E\|X_1\|^2<\infty$ and the characteristic generator $\psi$ satisfies $\psi(t^2) = o\l(\frac{1}{t^\alpha}\r)$ as $t \rightarrow \infty$ for some $\alpha > 0$; notable examples include Gaussian distribution as well as multivariate t-distribution with $\nu>2$ degrees of freedom. Variants of the deviation bounds in the case of non-isotropic distributions can be obtained from the combination of results below and Theorem 2.1 in \citep{minsker2024improved}. Since we are interested in the sharp asymptotic values of constants, we also assume in the results of this section that $m=m(N)\to\infty$ and that $d=d(N)$ can also increase at a specified rate. When $m$ is fixed, the bounds remain valid modulo the values of absolute constants. 
\begin{theorem}
\label{th:tukey-nonasymp}
    \textbf{(a)} Assume that $\max(d,t)\frac{m^2}{N}\leq c$ for a sufficiently small absolute constant $c>0$ and that 
    $\mb E\l|\dotp{X}{u}\r|^p<\infty$ for some $p>4$. Moreover, suppose that 
    \[
    \sqrt N\gg d(\log(m)+\log(d)), \quad m\gg \log^2(d) \ \text{ as } N\to\infty,
    \]
    and let $\eps>0$ be arbitrary. Then
    \begin{equation}
        \label{eq:deviation-1}
    \l\|\wh\mu_{N,m} - \mu \r\| \leq (2 + o_\eps(1))\l((1+\eps)\sqrt{\frac{d}{N}} +
    \sqrt{\frac{2t}{N}}\r)
    \end{equation}
    with probability at least $1-4e^{-t}$, where the sequence $o_\eps(1)\to 0$ as $N,m\to\infty$. 
    \\
    \noindent \textbf{(b)} Assuming only that $\mb E\|X\|^2<\infty$ and the condition $\max(d,t)\frac{m^2}{N}\leq c$, we have that
    \begin{equation}
        \label{eq:deviation-2}
    \l\|\wh\mu_{N,m} - \mu \r\| \leq (2 + o_\eps(1)) \l(4(1+\eps)\sqrt{\frac{d}{N}} +
    \sqrt{\frac{2t}{N}} \r),
    \end{equation}
    with probability at least $1-4e^{-t}$. 
\end{theorem}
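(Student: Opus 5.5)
Our plan is to reduce the deviation of $\wh\mu_{N,m}$ to the uniform deviation of a U-process indexed by half-spaces and then apply Theorem \ref{thm:Bernstein-bound}. Consider the class $\m H_m=\{h_{z,u}(x_1,\ldots,x_m)=I\{\dotp{\bar x-z}{u}\geq 0\}: z\in\mb R^d,u\in S^{d-1}\}$, where $\bar x = \frac1m\sum_i x_i$. It consists of indicators of half-spaces in $\mb R^{md}$ composed with a linear map, so it is VC-subgraph with VC dimension $O(d)$ and therefore Euclidean with exponent $V\asymp d$. Set $\Delta:=\sup_{z,u}|U_{N,m}(h_{z,u})-\mb E h_{z,u}|$. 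Write $F_m(s)$ for the distribution function of $\dotp{\bar X_J-\mu}{u}$; by elliptical symmetry and isotropy it does not depend on $u$ and is symmetric about $0$. Then $\mb E h_{z,u}=1-F_m(\dotp{z-\mu}{u})$. Two consequences follow. The empirical depth of $\mu$ is at least $\tfrac12-\Delta$. For any $z$, taking $u=(z-\mu)/\|z-\mu\|$ shows that the empirical depth of $z$ is at most $1-F_m(\|z-\mu\|)+\Delta$. Since $\wh\mu_{N,m}$ (and the barycenter of the convex argmax set) has depth at least that of $\mu$, we obtain $F_m(\|\wh\mu_{N,m}-\mu\|)-\tfrac12\leq 2\Delta$. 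Here the factor $2$ of the theorem appears.

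Next, we invert $F_m$. The variable $\sqrt m\dotp{\bar X_J-\mu}{u}$ has variance $1$ and is approximately standard normal, so $F_m(s)-\tfrac12\approx \phi(0)\sqrt m\, s$ for $s\lesssim 1/\sqrt m$, with $\phi(0)=1/\sqrt{2\pi}$. Hence $\|\wh\mu_{N,m}-\mu\|\leq (1+o(1))\,2\Delta\sqrt{2\pi}/\sqrt m$, provided $\Delta\ll 1$, which holds because $tm^2/N\leq c$. In part (a) the assumption of finite $p>4$ moments gives a Berry--Esseen or Edgeworth-type control, uniform in $u$, of the density of $\sqrt m\dotp{\bar X-\mu}{u}$ near $0$; the decay condition on $\psi$ yields a density at all. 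In part (b) we only get a cruder lower bound on the slope of $F_m$ near $0$, obtained through a local CLT or Lindeberg argument. We expect this loss to account for the extra factor $4$ in front of $\sqrt{d/N}$.

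Now we apply Theorem \ref{thm:Bernstein-bound} with $V\asymp d$; the hypothesis $\max(V,t)m^2/N\leq c$ matches our assumption. For the variance term, $h^{(1)}_{z,u}(x)=P(\dotp{\bar X-z}{u}\geq0\mid X_1=x)-\mb Eh$. Its variance is maximized near $z=\mu$, where $m\,\var(h^{(1)})\to \phi(0)^2\cdot 1\cdot$ (after normalization) $=1/(2\pi)$. Multiplying by the factor $\sqrt{2\pi}/\sqrt m$ from the inversion step turns $\sqrt{2tm\sigma^2}\sqrt{m/N}$ into $\sqrt{2t/N}$. For the expectation term, the H\'ajek projection is asymptotically $\frac{m}{N}\sum_i \phi(0)\dotp{X_i-\mu}{u}/\sqrt m$ plus a remainder. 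Its supremum over $u$ has expectation at most $\phi(0)\sqrt{m}\,\mb E\|\frac1N\sum_i(X_i-\mu)\|\leq \phi(0)\sqrt{m d/N}$, which after rescaling gives $\sqrt{d/N}$. The remaining terms $C(1/\eps+\ldots)tm/N$ and $CV\log^{3/2}(m)/N$, once multiplied by $1/\sqrt m$, are $o(\sqrt{d/N}+\sqrt{t/N})$ under $\sqrt N\gg d(\log m+\log d)$.

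The main obstacle is the approximation $h^{(1)}_{z,u}(x)\approx \phi(0)\dotp{x-\mu}{u}/\sqrt m$ uniformly over $u$ and over relevant $z$, together with the control of the supremum of the remainder empirical process over the $d$-dimensional class. We would expand $G(s)=P(\dotp{\sum_{i\geq2}(X_i-\mu)}{u}\geq s)$ to first order around $m\dotp{z-\mu}{u}$, using the smoothness of the density of the sum of $m-1$ projections; this density is uniformly bounded with bounded derivative thanks to the condition on $\psi$ and the moment assumption. The second-order remainder is of size $\dotp{x-\mu}{u}^2/m$. A union bound plus chaining gives an error of order $\sqrt{d/N}\cdot(\log d)/\sqrt m$, and this is exactly where the conditions $m\gg\log^2 d$ and $\sqrt N\gg d\log(md)$ are consumed.
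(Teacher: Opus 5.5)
There is a genuine gap at the step that carries all the sharp constants. Your reduction $F_m(\|\wh\mu_{N,m}-\mu\|)-\frac12\le 2\Delta$ is correct. But $\Delta$ is a supremum over \emph{all} half-spaces, so applying Theorem \ref{thm:Bernstein-bound} requires
$\mb E\sup_{z,u}\bigl|\frac mN\sum_i h^{(1)}_{z,u}(X_i)\bigr|\le(1+o(1))\sqrt{dm/N}/\sqrt{2\pi}$
over this $(d+1)$-parameter class, and the same bound with an extra factor $4$ in part (b). The linearization $h^{(1)}_{z,u}(x)\approx\phi(0)\dotp{x-\mu}{u}/\sqrt m$ holds only when $\sqrt m\,\dotp{z-\mu}{u}=o(1)$. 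In general, $h^{(1)}_{z,u}$ is a function of $\dotp{x-\mu}{u}$ whose shape depends on the unbounded index $s=\sqrt m\,\dotp{z-\mu}{u}$. The contraction principle needs a Lipschitz map that does not depend on the index, so it is unavailable here. Your ``union bound plus chaining'' over this extra parameter is asserted, not shown. The phrase ``over relevant $z$'' points at the issue, but nothing in your reduction restricts $z$.

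The missing idea is localization, which is how the paper argues; it never takes a supremum over all $z$.
\begin{itemize}
\item It uses $D_{N,m}(\wh\mu_{N,m})\ge D_{N,m}(\mu)\ge\frac12-R_{N,m}$, where $R_{N,m}$ is the supremum over half-spaces through $\mu$ only.
\item For the gap between population and empirical depth at $\wh\mu_{N,m}$, it first proves the crude bound $\|\wh\mu_{N,m}-\mu\|\le C\sqrt{\max(d,t)/N}$ (Lemma \ref{lemma:prelim}, via Hoeffding's averaging representation and a standard VC bound).
\item It then bounds the modulus of continuity over $\|z-\mu\|\le C\sqrt{\max(d,t)/N}$ by order $\sqrt{\max(d,t)m/N}\,\log(m)/\sqrt m$ (Lemma \ref{lemma:cont-modulus}).
\end{itemize}
This gives $D_m(\wh\mu_{N,m})\ge\frac12-2R_{N,m}-(\text{negligible})$, with the same factor $2$ as yours. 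The sharp analysis is then needed only for the $u$-indexed class, where $h^{(1)}_u(x)=g_m(\dotp{x-\mu}{u})$ for a single function $g_m$, so contraction applies. You need either this localization or a genuinely uniform-in-$z$ remainder analysis; as written, the sharp constants do not follow.

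Your account of where the hypotheses enter is also off, and this matters for part (b).
\begin{itemize}
\item \textbf{The inversion step is sharp under second moments alone.} The condition on $\psi$ makes the densities of $\sqrt m\dotp{\bar X_m-\mu}{u}$ Lipschitz uniformly in $m$ (Lemma \ref{lemma:uniform-lipschitz}). Together with P\'olya's theorem this gives $\sup_x|\phi_m(x)-\phi(x)|\to0$ (Lemma \ref{lemma:implication-of-uniform-lipschitz}), hence the factor $\sqrt{2\pi}+o(1)$ in Lemma \ref{lemma:depth} in both parts. No Berry--Esseen or Edgeworth input and no $p>4$ moments are used there.
\item \textbf{A cruder slope cannot explain the factor $4$.} It would inflate the $\sqrt{2t/N}$ term as well, whereas in the bound of part (b) the factor $4$ multiplies only $\sqrt{d/N}$. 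In the paper it comes from bounding the expected supremum of the H\'ajek projection by symmetrization and contraction with the $\|\phi_m\|_\infty$-Lipschitz map $y\mapsto\int_0^y\phi_m$, a loss of $2\times 2$. Meanwhile $m\sigma^2_{\m H_m}\le(1+o(1))/(2\pi)$ needs only second moments.
\item \textbf{The $p>4$ moments in part (a) control the unbounded part of the remainder} $\int_0^{Z(u)}(\phi_m(t)-\phi_m(0))\,dt$. The paper truncates $\dotp{X}{u}$ at level $m^{1/4}$ and handles the truncated part by contraction, gaining $m^{-1/4}$. It handles the tail part by Dudley's integral, whose empirical diameter is controlled through the Abdalla--Zhivotovskiy bound on $\sup_u\sum_{j\in J}\dotp{X_j}{u}^2$ over small sets $J$ and through $\mb E\max_j\|X_j\|^2\lesssim d\sqrt N$.
\item \textbf{This is where the growth conditions are used.} The resulting relative error is $\max(m^{-1/4},\sqrt d\,N^{-1/4})\log^{1/2}(dm)$, which is exactly where $m\gg\log^2 d$ and $\sqrt N\gg d\log(md)$ are consumed. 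The error $\sqrt{d/N}\,\log(d)/\sqrt m$ you state would not need the latter condition. You also give no mechanism for summing the quadratic bound $\dotp{x-\mu}{u}^2/m$ uniformly in $u$ when only $4+\delta$ moments are available.
\end{itemize}
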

\noindent The proof of this result is given in section \ref{proof:tukey-nonasymp}. Several comments are in order:
\begin{enumerate}
    \item[(i)] Condition $\sqrt N\gg d(\log(m)+\log(d))$ can be replaced by the weaker requirement that $\mb E\max_{j=1,\ldots,N} \|X_j\|^2\,\frac{\log(md)}{N}\to 0$ as $N\to\infty$.
    \item[(ii)] The $o_\eps(1)$ terms in the statement are generally distribution-dependent, however, dependence is expressed only in terms of the distribution of one-dimensional projection $\dotp{X}{u}$.
    \item[(iii)] 
    When the underlying distribution is isotropic Gaussian, the sample mean $\bar X_N$ is the estimator that satisfies the best possible non-asymptotic deviation guarantees as $t\to\infty$, given by Borell-TIS inequality \citep{borell1975brunn,tsirel2006norms}. Namely, 
    \[
    \|\bar X_N - \mu\|\leq \sqrt{\frac{d}{N}} + \sqrt{\frac{2t}{N}}
    \]
    with probability at least $1-e^{-t}$. 
    Inequality \eqref{eq:deviation-1} differs from this optimal bound by the factor of 2 (asymptotically), despite the fact that the distribution of $X$ is only assumed to have finitely many moments. Whether it is possible, for every $\eps>0$, to reduce the multiplicative factor to $1+o_\eps(1)$ in the range $1\leq t\leq N^{\alpha}$ for some $\alpha>0$, is currently an open question (note that the multivariate Berry-Esseen theorem \citep{gotze1991rate} implies that this holds for the sample mean for $t\ll \sqrt{\log(N)}$ when $d$ is fixed). We refer the reader to the papers by \citet{gupta2024beyond,lee2022optimal} for further results in this direction. 
    We conjecture that when $d$ grows very slowly, the estimator $\wh\mu_{N,m}$ satisfies inequality \eqref{eq:deviation-1} with asymptotic constant $1$ instead of $2$ for $t\leq cN^{1/d}$; additional technical comments related to this question are given in Remark \ref{remark:optimality}.
\end{enumerate}
We conclude this section with the characterization of the asymptotic distribution of $\wh\mu_{N,m}$ when the ambient dimension $d$ is fixed. As we mentioned in the beginning of this section, limiting distribution of usual Tukey's median is non-normal unless $d=1$. It turns out that the Hodges-Lehmann modification $\wh\mu_{N,m}$ regains asymptotic normality when $m\to\infty$. 
\begin{theorem}
\label{thm:normality}
    Assume that $d$ is fixed, $\mb E\|X\|^2<\infty$, and that $\limsup\limits_{m,N \rightarrow \infty}\frac{m^2}{N} < c$ for a sufficiently small constant $c>0$.
Then $\sqrt{N}\l(\wh\mu_{N,m} - \mu\r) \rightarrow \mathcal{N}(0, I_d)$ in distribution as $m,N\to\infty$.
\end{theorem}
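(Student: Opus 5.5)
The plan is the standard argmax/empirical-process route for M-estimators. The U-process uniform bound of Theorem~\ref{thm:Bernstein-bound} will serve as a linearization device, and the Hájek projection will supply the Gaussian limit.

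\emph{Step 1: setup.} By affine equivariance and isotropy, take $\mu=0$ and $\Sigma=I_d$. Consider the class of kernels
\[
\m H_m=\l\{h_{z,u}(x_1,\ldots,x_m)=I\{\dotp{\bar x-z}{u}\geq 0\}:\ z\in\mb R^d,\ u\in S^{d-1}\r\},
\]
which is VC-subgraph and hence Euclidean with exponent $V\asymp d$. Write $D_{N,m}(z)=\inf_u U_{N,m}(h_{z,u})$ and $D_m(z)=\inf_u \mb E h_{z,u}$. Because the data are elliptically symmetric, $\sqrt m\,\dotp{\bar X_J}{u}$ has a symmetric distribution $G_m$ that does not depend on $u$. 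Moreover $G_m\to \m N(0,1)$ by the CLT, and a local CLT holds for the density $g_m$, uniformly in a neighborhood of $0$ (here the assumption $\mb E\|X\|^2<\infty$ together with the decay of the characteristic generator $\psi$ should give density convergence). Consequently $\mb E h_{z,u}=1-G_m(\sqrt m\dotp{z}{u})$.

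\emph{Step 2: consistency at rate $N^{-1/2}$.} Apply Theorem~\ref{thm:Bernstein-bound} with $t$ fixed, which is allowed since $m^2/N<c$. Using the Euclidean bound $\mb E\sup|\frac mN\sum h^{(1)}|\lesssim \sqrt{m/N}\sqrt{d\, m\sigma^2}$ together with $m\sigma^2_{\m H_m}\le 1$, this gives
\[
\sup_{h\in\m H_m}|U_{N,m}(h)-\mb Eh|=O_P(\sqrt{m/N}).
\]
Since $D_{N,m}(\wh\mu_{N,m})\ge D_{N,m}(0)$, we get $D_m(\wh\mu_{N,m})\ge \tfrac12-O_P(\sqrt{m/N})$. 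On the other hand $D_m(z)=1-G_m(\sqrt m\|z\|)\approx \tfrac12-\sqrt m\|z\|\,g_m(0)$ for small arguments, and $g_m(0)\to(2\pi)^{-1/2}$. Therefore $\|\wh\mu_{N,m}\|=O_P(N^{-1/2})$.

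\emph{Step 3: localization and linearization.} Restrict to $z=s/\sqrt N$ with $\|s\|\le K$. Define
\[
W_N(u)=\sqrt{N/m}\,\frac mN\sum_i\bigl(h_{0,u}^{(1)}(X_i)\bigr).
\]
The Hájek projection of $h_{z,u}$ is
\[
h^{(1)}_{z,u}(x)=\mb P\bigl(\dotp{x}{u}/m+\dotp{\bar X_{m-1}'}{u}\ge \dotp zu\bigr)-\mb Eh_{z,u}\approx \frac{g_m(0)}{\sqrt m}\dotp{x}{u}.
\]
I would show that, uniformly over $\|s\|\le K$ and $u$,
\[
\sqrt{N/m}\bigl(U_{N,m}(h_{s/\sqrt N,u})-\tfrac12\bigr)= g_m(0)\bigl(\dotp{\bar X_N\sqrt N}{u}-\dotp su\bigr)+o_P(1).
\]
This has three ingredients. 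First, the higher-order Hoeffding terms contribute $O_P(\sqrt{m/N}\cdot m/\sqrt N\cdot\text{polylog})$ after rescaling by $\sqrt{N/m}$, by Theorem~\ref{thm:Bernstein-bound} and Remark~\ref{remark:explicit}, and this is $o_P(1)$ when $m^2/N$ is small. I expect the polylog factors to need care, possibly requiring the moment bound of Corollary~\ref{corollary:1} directly. Second, the projection is replaced by its linearization; here the Taylor error in $x/\sqrt m$ is controlled by $\mb E\|X\|^2<\infty$ together with uniform continuity of $g_m$, and the difference has variance $o(1)$. Third, asymptotic equicontinuity in $s$ follows from a maximal inequality for the Euclidean class.

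\emph{Step 4: argmax.} After these reductions, $\sqrt N\wh\mu_{N,m}$ maximizes, up to $o_P(1)$,
\[
s\mapsto\inf_u\bigl(\dotp{Z_N-s}{u}\bigr)=-\|Z_N-s\|,\qquad Z_N=\sqrt N\bar X_N .
\]
The limiting criterion has a unique, well-separated maximizer $s=Z_N$. Indeed, $-\|Z_N-s\|$ decreases linearly away from $Z_N$, so any near-maximizer within $o_P(1)$ of the optimum lies within $o_P(1)$ of $Z_N$. Hence $\sqrt N\wh\mu_{N,m}=Z_N+o_P(1)\to\m N(0,I_d)$.

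\emph{Main obstacle.} I expect the hardest step to be Step 3: making the projection linearization and the local-CLT density convergence uniform in $u$ and $s$ with only second moments, and verifying that the remainder bound of Theorem~\ref{thm:Bernstein-bound}, which is stated with $\log^{3/2}(m)$ factors, indeed yields $o_P(\sqrt{m/N})$ under $m^2/N<c$ rather than under $m^2/N\to0$. If necessary, I would sharpen that bound by applying Corollary~\ref{corollary:1} to each $j\ge2$ term separately and summing the resulting geometric series in $m/\sqrt N$.
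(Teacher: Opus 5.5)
Your plan is sound, and its ingredients are the paper's. The higher-order Hoeffding terms are discarded via Lemma~\ref{lemma:higher-order} with $p=2$. A preliminary $N^{-1/2}$ localization comes from a uniform deviation bound for the U-process. The population depth is Taylor-expanded using the uniform Lipschitz continuity and uniform convergence of the densities (Lemmas~\ref{lemma:uniform-lipschitz} and~\ref{lemma:implication-of-uniform-lipschitz}). The Hájek projection $\sqrt m\,h^{(1)}$ is approximated in $L_2$ by a multiple of $\langle x,u\rangle$, and uniformity comes from a Lipschitz-in-parameter bound.

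What differs is the endgame. The paper proves weak convergence of the whole process $(u,z)\mapsto\sqrt{N/m}\,U_{N,m}(u,z)$ on the $\sqrt m$-scale (Lemma~\ref{thm:stochastic_convergence}). It gets finite-dimensional convergence from Cram\'er--Wold and a Lindeberg argument for the triangular array $\sqrt m\,h^{(1)}$, and then applies the argmax continuous mapping theorem to $z\mapsto\inf_u\langle W-z,u\rangle$. You instead work directly at scale $z=s/\sqrt N$ and identify the linearized Hájek process as $g_m(0)\langle\sqrt N\bar X_N,u\rangle$. The criterion then becomes $-g_m(0)\|\sqrt N\bar X_N-s\|+o_P(1)$ uniformly on compacts, and an elementary near-maximizer argument finishes. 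This gives you the Bahadur-type representation $\sqrt N\,\wh\mu_{N,m}=\sqrt N\bar X_N+o_P(1)$. That is stronger than the stated convergence in law and makes ``asymptotic equivalence with the sample mean'' literal. It also needs only the classical CLT for $\bar X_N$, not a triangular-array Lindeberg argument plus the argmax theorem. The paper's route additionally yields the limit process on the coarser $\sqrt m$-scale (with the factor $e^{-\langle z,u\rangle^2/2}$), which the theorem does not use.

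A few details. The obstacle you flag is not one. Lemma~\ref{lemma:higher-order} with $p=2$ already bounds the rescaled remainder by
$\sqrt{N/m}\,S_{m,2}\lesssim \frac{V\log^{3/2}m}{\sqrt{Nm}}+\sqrt{\frac{m}{N}\bigl(1+\frac{Vm^2\log m}{N}\bigr)}\lesssim\sqrt{\frac{\log m}{m}}\to 0$
when $m^2/N$ is merely bounded, and this is exactly what the paper uses.

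The places where you must avoid $\sqrt{\log m}$ losses are elsewhere:
\begin{itemize}
\item \emph{Step 2.} A generic variance-sensitive bound for the Euclidean Hájek class gives $\sqrt{dm\log m/N}$. That only yields $\sqrt N\|\wh\mu_{N,m}\|=O_P(\sqrt{\log m})$, which is not tight. Instead, obtain $\sup_h|U_{N,m}(h)-\mb Eh|=O_P(\sqrt{dm/N})$ from Hoeffding's representation, as in Lemma~\ref{lemma:prelim}. This reduces the problem to a half-space empirical process over $\lfloor N/m\rfloor$ i.i.d.\ block means, with no logarithm.
\item \emph{Step 3, uniform control.} The uniform control of the Hájek-minus-linear process should use the square-integrable envelope $C(1+\|x\|)$ of $\sqrt m\,h^{(1)}$. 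Equivalently, use the Lipschitz bound of Lemma~\ref{lemma:Delta2} with Dudley's integral, as the paper does. The trivial sup-norm bound $\sqrt m$ would again cost a factor $\sqrt{\log m}$.
\item \emph{Step 3, linearization error.} With only second moments, bound the linearization error by $\min\bigl(L\langle x,u\rangle^2/\sqrt m,\,2\|g_m\|_\infty|\langle x,u\rangle|\bigr)$ and apply dominated convergence. The plain Lipschitz Taylor bound would require fourth moments.
\end{itemize}
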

Affine equivariance of $\wh\mu_{N,m}$ immediately implies that in the non-isotropic case, the limiting distribution is $\mathcal{N}(0, \Sigma)$ where $\Sigma$ is the covariance of $X$. 
Let us mention that the core argument behind Theorem \ref{thm:normality} consists in establishing the relation
\begin{equation}
    \label{eq:asymp-tukey-2}
 \sqrt{N}\l( \wh\mu_{N,m} - \mu \r) \xrightarrow[]{d}  \arg\max_{z\in \mb R^d} \inf_{u\in S^{d-1}} \l[\dotp{\frac{Z}{\sqrt{2\pi}}}{u} - \frac{1}{\sqrt{2\pi}}\dotp{u}{z} \r]
\end{equation}
where $Z\sim N(0,I_d)$. Comparing this to display \eqref{eq:asymp-tukey} that describes asymptotic distribution of the usual Tukey's median, we see that the Brownian bridge $W(u)$ has been replaced by the scaled isonormal Gaussian process $\frac{1}{\sqrt{2\pi}}\dotp{Z}{u}$. Now, calculation in \eqref{eq:asymp-tukey-2} can be performed explicitly, immediately yielding that the required $\arg\max$ equals $Z$.

\section{Conclusion and discussion}

In this work, we proved new deviation inequalities for $U$-processes of growing order. Along the way, we established a new version of Bonami's inequality for Rademacher chaos processes of arbitrary order and, as a corollary, moment bounds for degenerate U-processes.   
As a case study, we applied these tools to study a version of half-space median defined \`{a} la the univariate Hodges-Lehmann estimator. We proved  sub-Gaussian concentration inequalities for this estimator, with a particular emphasis on the values of absolute constants. 
At the same time, results for the half-space median are currently restricted to the situation where the underlying distribution is elliptically symmetric. It would be interesting to extend the scope of the bounds to a larger family of distributions with well-defined covariance matrices, as well as to consider other estimators in place of Tukey's median in our construction, such as versions of the multivariate median of means estimator \citep{lugosi2019mean}. Detailed results in this direction will be presented elsewhere.

\appendix

\section{Technical background}
\label{appendix:backgrond}

In this section we present the background material and discuss the technical tools used throughout the paper. 

\subsection{Moment and concentration bounds}
\label{section:tech}

\begin{lemma}[Theorem 11 in \citep{boucheron2005moment}]
\label{lemma:sup-rosenthal}
    Let $\mathcal{F}$ be a countable class of non-negative functions defined on some measurable set $\mathcal{X}$. Let $X_1, \ldots, X_N$ denote  a collection of $\mathcal{X}$-valued independent random variables. Let $Z = \sup_{f \in \mathcal{F}}\sum_i f(X_i)$ and
    \begin{align*}
        M = \max_{1 \leq i \leq N} \sup_{f \in \mathcal{F}} f(X_i)\,.
    \end{align*}
    Then, for all $p \geq 2$ and $\theta \in (0, 2)$,
    \begin{align*}
        \|Z\|_p \leq (1 + \theta)\mb E Z + cp\l(1 + \frac{1}{\theta}\r)\|M\|_p\,,
    \end{align*}
    where $c$ is an absolute constant.
\end{lemma}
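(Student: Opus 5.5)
The plan is to prove the lemma with the entropy-method moment inequalities of \citet{boucheron2005moment}, in the form of the generalized Efron--Stein inequality for $p$-th moments. First I reduce to a finite class $\mathcal F$. Since $\mathcal F$ is countable, $Z$ is the increasing limit of suprema over finite subclasses, and the same holds for $M$. Monotone convergence then transfers the bound to the countable case, because both sides are monotone in the class. I also assume $\|M\|_p<\infty$, since otherwise there is nothing to prove.

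\textbf{Moment inequality.} Let $X_1',\ldots,X_N'$ be an independent copy of the sample. Let $Z_i'$ denote $Z$ computed with $X_i$ replaced by $X_i'$, and set
\[
V^+=\mb E'\sum_{i=1}^N (Z-Z_i')_+^2 ,
\]
where $\mb E'$ is expectation with respect to the primed variables. The tool I would invoke is the inequality (Theorem 2 in \citep{boucheron2005moment}): for all $p\geq 2$,
\[
\|(Z-\mb EZ)_+\|_p\leq \sqrt{\kappa p}\,\|V^+\|_{p/2}^{1/2},
\]
with an absolute constant $\kappa$.

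\textbf{Bounding $V^+$.} Let $\hat f\in\mathcal F$ attain the supremum defining $Z$; this exists because the class is finite, and it depends only on $X_1,\ldots,X_N$. Then
\[
Z_i'\geq \sum_{k\neq i}\hat f(X_k)+\hat f(X_i'),
\]
so $Z-Z_i'\leq \hat f(X_i)-\hat f(X_i')\leq \hat f(X_i)$, where the last step uses non-negativity of the functions. Hence
\[
V^+\leq \sum_i \hat f(X_i)^2\leq M\sum_i\hat f(X_i)=MZ .
\]
This self-bounding-type estimate is the key point, and it is exactly where non-negativity enters.

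\textbf{Solving for $\|Z\|_p$.} Since $Z\ge 0$, we have $\|Z\|_p\leq \mb EZ+\|(Z-\mb EZ)_+\|_p$. By Cauchy--Schwarz, $\|MZ\|_{p/2}\leq \|M\|_p\|Z\|_p$. Writing $x=\|Z\|_p$, which is finite since $Z\le NM$, these combine to
\[
x\leq \mb EZ+\sqrt{\kappa p\|M\|_p\, x}.
\]
For any $\delta\in(0,1)$, $\sqrt{bx}\leq \delta x+b/(4\delta)$, which gives
\[
x\leq \frac{1}{1-\delta}\Bigl(\mb EZ+\frac{\kappa p\|M\|_p}{4\delta}\Bigr).
\]

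\textbf{Choosing $\delta$.} Take $\delta=\theta/(1+\theta)$, which lies in $(0,2/3)$ for $\theta\in(0,2)$. Then $1/(1-\delta)=1+\theta$, and the second term becomes
\[
\frac{\kappa}{4}(1+\theta)\frac{1+\theta}{\theta}\,p\|M\|_p\leq \frac{3\kappa}{4}\Bigl(1+\frac1\theta\Bigr)p\|M\|_p,
\]
using $1+\theta<3$. This is the claimed bound with $c=3\kappa/4$.

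\textbf{Main obstacle.} The only nontrivial step is the Efron--Stein-type moment inequality itself. I take it from \citep{boucheron2005moment} rather than reproving it; its proof goes through the modified logarithmic Sobolev inequality and a recursion on moments. Everything else above is elementary.
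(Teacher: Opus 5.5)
Your proof is correct. The paper gives no argument of its own for this lemma and simply imports Theorem 11 of \citet{boucheron2005moment}, whose proof is essentially the one you reconstruct: the moment Efron--Stein inequality (their Theorem 2), the estimate $V^+\le\sum_i \hat f(X_i)^2\le MZ$ from non-negativity, Cauchy--Schwarz to get $\|V^+\|_{p/2}\le\|M\|_p\|Z\|_p$, and then solving the resulting quadratic inequality in $\|Z\|_p$, with your finite-class reduction, the finiteness check $Z\le NM$, and the choice $\delta=\theta/(1+\theta)$ all sound.
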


\begin{lemma}[Theorem 2.6.7 in \citep{wellner2013weak}]
\label{lemma:covering_bound}
Let $\mathcal{F}$ be a VC-subgraph class of functions of VC dimension $V$ and with measurable envelope $F$. Then for any probability measure $Q$ with $\|F\|_{Q,2}>0$,
\begin{equation*}
N\l(\mathcal{F}, \|\cdot\|_{L_2(Q)},\varepsilon\|F\|_{Q,2}\r)\leq \l(\frac{K}{\varepsilon}\r)^{2V}
\end{equation*}
for a universal constant $K$ and $0<\varepsilon<1$.
\end{lemma}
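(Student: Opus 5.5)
The statement immediately above is Lemma \ref{lemma:covering_bound}, the classical uniform entropy bound for VC-subgraph classes (Theorem 2.6.7 in \citep{wellner2013weak}). I would prove it by reducing to a packing problem for a finite VC class of sets and then applying Haussler's probabilistic packing argument.

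\textbf{Step 1: reduction to sets.} Fix $Q$ with $\|F\|_{Q,2}>0$ and let $f_1,\ldots,f_M\in\mathcal F$ be a maximal family with $\|f_i-f_k\|_{L_2(Q)}>\varepsilon\|F\|_{Q,2}$ for $i\neq k$. Maximality gives an $\varepsilon\|F\|_{Q,2}$-net, so it suffices to bound $M$. Define the probability measure $\tilde Q$ on $\mathcal X\times\mathbb R$ by
\[
d\tilde Q(x,t) = \frac{F(x)}{2\|F\|_{Q,1}}\, I\{|t|\le F(x)\}\, dQ(x)\,dt .
\]
By Cauchy--Schwarz and a change of measure (first passing from $L_2(Q)$ to an $L_1$ norm weighted by $F$, at the cost of squaring $\varepsilon$), $\|f_i-f_k\|_{L_2(Q)}^2$ is controlled by $2\|F\|_{Q,1}\|F\|_{Q,2}^{-2}\cdots$ times $\tilde Q(C_{f_i}\triangle C_{f_k})$. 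Here $C_f=\{(x,t): t<f(x)\}$ is the subgraph, and $\int |f-g|\,F\,dQ = 2\|F\|_{Q,1}\tilde Q(C_f\triangle C_g)$. The outcome is that the subgraphs are $\delta$-separated in $L_1(\tilde Q)$ with $\delta\gtrsim \varepsilon^2$. The subgraphs form a VC class of dimension $V$, so it remains to bound $\delta$-packings of VC classes of sets.

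\textbf{Step 2: packing of VC classes of sets.} Let $\mathcal C$ be a VC class with index $V$ and let $C_1,\ldots,C_M$ be $\delta$-separated in $L_1(\tilde Q)$. Draw $n$ i.i.d. points from $\tilde Q$. For a fixed pair, the probability that no sample point falls in $C_i\triangle C_k$ is at most $(1-\delta)^n\le e^{-n\delta}$. Choose $n\approx 2\log M/\delta$, so that $\binom M2 e^{-n\delta}<1$. Then with positive probability every pair is distinguished by the sample, so $\mathcal C$ picks out at least $M$ distinct subsets of $n$ points. Sauer's lemma bounds the number picked out by $(en/V)^V$, which gives
\[
M\le \Big(\frac{2e\log M}{V\delta}\Big)^V .
\]
Solving this implicit inequality, using $\log x\le x^{1/2}$-type absorption, yields $M\le (K/\delta)^{cV}$.

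\textbf{Main obstacle.} Combining the two steps naively gives $(K/\varepsilon)^{2cV}$. To get the sharp exponent $2V$ in $\varepsilon$ one needs Haussler's refined packing bound $M\le e(V+1)(2e/\delta)^V$, which removes the $\log M$ loss. The refinement comes from a more careful probabilistic argument, the "one-inclusion graph" / shifting approach. I would quote or reproduce Haussler's bound at this step rather than redo it. With that bound, $\delta\asymp\varepsilon^2$ gives $M\le K'V(4e)^V\varepsilon^{-2V}$. The prefactor $V$ is absorbed into $K^{2V}$, since $V^{1/(2V)}$ is bounded. Only a bound of the form $(K/\varepsilon)^{O(V)}$ is used later, in Lemma \ref{lemma:1}, so the weaker Step 2 already suffices for the paper's purposes. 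The precise exponent affects only constants.
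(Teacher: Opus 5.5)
Your route is essentially the proof behind the result the paper cites. The paper gives no argument of its own, only the reference to Theorem 2.6.7 of van der Vaart and Wellner. That proof passes to subgraphs under a renormalized product measure, reweights by $F$ to move from $L_2(Q)$ to an $L_1$ distance at the cost $\varepsilon\mapsto\varepsilon^2$, and applies a packing bound for VC classes of sets. As in that book, the sampling-plus-Sauer argument yields only a bound with an extra logarithm, and the sharp exponent $2V$ comes from quoting Haussler's bound $e(V+1)(2e/\delta)^V$; your single combined measure is just their two-step construction (first $r=1$, then reweighting by $F^{r-1}$) done at once.

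One slip needs fixing. Your $\tilde Q$ has total mass $\|F\|_{Q,2}^2/\|F\|_{Q,1}$, so it is not a probability measure and you cannot sample from it as written in Step 2. Normalize by $2\|F\|_{Q,2}^2$ instead of $2\|F\|_{Q,1}$, i.e.\ take $(R\times\lambda)/(2\int F\,dR)$ on $\{|t|\le F(x)\}$ with $dR=F\,dQ/\|F\|_{Q,1}$. Then the pointwise bound $|f-g|\le 2F$ (no Cauchy--Schwarz needed) gives $\|f-g\|_{Q,2}^2\le 2\int F|f-g|\,dQ=4\|F\|_{Q,2}^2\,\tilde Q(C_f\triangle C_g)$, hence $\delta=\varepsilon^2/4$.
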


\begin{lemma} 
\label{lemma:talagrand-bousquet} 
\cite[Theorem 12.5]{boucheron2013concentration}
Let $\m F$ be a class of measurable real-valued functions and let $X_1,\ldots,X_N$ be i.i.d. Assume that $\mb{E}f(X_1)=0$ for all $f\in \m F$ and that $\sup_{f\in \m F} |f(X_1)|\leq U$ with probability $1$. Denote $Z=\sup _{f \in \mathcal{F}} \sum_{k=1}^N f\left(X_k\right)$.
Assume that $\sigma_\ast^2\geq N\sup_{f\in \m F} \mb{E}f^2(X_1)$ and set $v = \sigma_\ast^2 + 2U\mb{E}[Z]$. Then for all $t\geq 0$, 
\begin{equation}
\label{eq:bousquet}
    \mb{P}\left(Z \geq \mb{E} Z + \sqrt{2 t v}+\frac{tU}{3}\right) \leq e^{-t}.
\end{equation}
\end{lemma}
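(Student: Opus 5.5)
We would prove this by the entropy method, following Bousquet's modification of the Herbst argument. By the convention \eqref{def:sup} and monotone convergence, it suffices to treat a finite class $\m F$, and by rescaling $f\mapsto f/U$ we may take $U=1$. The target is the Bennett-type bound on the log-Laplace transform
\[
\log \mb E e^{\lambda(Z-\mb EZ)}\leq v\,\phi(\lambda),\qquad \phi(\lambda)=e^{\lambda}-\lambda-1,\quad \lambda\geq 0 .
\]
Given this, the Cram\'er--Chernoff method yields the Bennett inequality with function $h(x)=(1+x)\log(1+x)-x$. The elementary bound $h(x)\geq \frac{x^2}{2(1+x/3)}$ then converts it into the stated form $\mb EZ+\sqrt{2tv}+t/3$.

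\textbf{Step 1 (leave-one-out variables).} For each $k$, set $Z_k=\sup_{f\in\m F}\sum_{i\neq k}f(X_i)$, and let $f_k$ be a maximizer for $Z_k$. Let $\hat f$ be a maximizer for $Z$. These give the two-sided bounds
\[
f_k(X_k)\leq Z-Z_k\leq \hat f(X_k)\leq 1 .
\]
Consequently $\sum_k (Z-Z_k)\leq Z$. We also record the conditional-variance bound $\sum_k \mb E_k f_k(X_k)^2\leq \sigma_*^2$, where $\mb E_k$ integrates only over $X_k$. It uses the centering assumption $\mb Ef(X_1)=0$ and the fact that $f_k$ does not depend on $X_k$, which also gives $\mb E_k f_k(X_k)=0$.

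\textbf{Step 2 (modified log-Sobolev inequality).} Write $F(\lambda)=\log\mb Ee^{\lambda Z}$. The modified log-Sobolev inequality (tensorization of entropy) gives
\[
\lambda\mb E[Ze^{\lambda Z}]-\mb E e^{\lambda Z}\log\mb E e^{\lambda Z}\leq \sum_k \mb E\l[e^{\lambda Z}\phi\l(-\lambda(Z-Z_k)\r)\r].
\]
The key pointwise estimate is the following. For $x\leq 1$, the map $x\mapsto \phi(-\lambda x)/x^2$ is monotone, and this gives
\[
\phi(-\lambda x)\leq x^2\phi(\lambda)\ \text{ when } x\in[0,1], \qquad \phi(-\lambda x)\leq \lambda x\,(\ldots)
\]
in the ranges where $x$ may be negative.

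\textbf{Step 3 (the main obstacle).} The hardest step is to bound $\sum_k\mb E[e^{\lambda Z}\phi(-\lambda(Z-Z_k))]$ by a multiple of $\mb E e^{\lambda Z}$ times $(\sigma^2_*+2\mb EZ)$, up to terms that can be absorbed into the left-hand side. The sharp constant $2$ in front of $\mb EZ$ must survive this step. We would follow Bousquet's trick and write $Z-Z_k=\big(Z-Z_k-f_k(X_k)\big)+f_k(X_k)$; the first summand is nonnegative by Step 1. We then use $\phi(-\lambda(a+b))\leq$ a convex combination of the two pieces. The centering $\mb E_k f_k(X_k)=0$ handles the quadratic term through $\sigma_*^2$. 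Summing the nonnegative parts over $k$ is controlled by $\sum_k(Z-Z_k)\leq Z$, so that part contributes a term $\lambda\,\mb E[Ze^{\lambda Z}]$-like quantity. Bousquet's resolution is to apply a change of measure with $e^{\lambda Z}$ weights, together with the inequality $\phi(-\lambda)+\lambda\phi'(\ldots)$ relations that are specific to $U=1$.

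\textbf{Step 4 (Herbst argument).} Collecting terms gives a differential inequality of the form
\[
(\lambda-\phi(\lambda))F'(\lambda)-F(\lambda)\leq v\,\phi(\lambda)\quad\text{up to the centering } \lambda\mb EZ .
\]
Integrating it as in Bousquet's proof, via the substitution $G=F-\lambda\mb EZ$ and an integrating factor, yields $G(\lambda)\leq v\phi(\lambda)$. The concluding Bennett-to-Bernstein deviation step described above then finishes the proof.
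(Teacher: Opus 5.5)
\textbf{Verdict: genuine gap.} The paper does not prove this lemma; it imports it from \cite[Theorem 12.5]{boucheron2013concentration}, whose proof is the entropy-method argument you are reconstructing. Your preliminary steps are correct: the finite-class reduction, $U=1$, $f_k(X_k)\le Z-Z_k\le 1$, $\sum_k(Z-Z_k)\le Z$, and the modified log-Sobolev inequality.

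The gap is Step~3, which carries the whole content of Bousquet's inequality and which you only describe. As described it does not work. The weight $e^{\lambda Z}$ depends on $X_k$, so neither $\mathbb{E}[e^{\lambda Z}f_k(X_k)]=0$ nor $\mathbb{E}[e^{\lambda Z}f_k(X_k)^2]\le \sup_{f}\mathbb{E}f^2(X_1)\,\mathbb{E}e^{\lambda Z}$ is available. Hence the centering cannot ``handle the quadratic term through $\sigma_*^2$''. For the same reason, the leftover $-\sum_k f_k(X_k)$ created by your splitting of $Z-Z_k$ is not controlled by $\sum_k(Z-Z_k)\le Z$.

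One way to close the gap:
\begin{enumerate}
\item Move the weight off $X_k$ by writing $e^{\lambda Z}\phi(-\lambda(Z-Z_k))=e^{\lambda Z_k}\chi(Z-Z_k)$, with $\chi(x)=1-e^{\lambda x}+\lambda x e^{\lambda x}$.
\item Note that $\mathbb{E}e^{\lambda Z_k}\le \mathbb{E}e^{\lambda Z}$, by Jensen, since $Z\ge Z_k+f_k(X_k)$ and $\mathbb{E}_kf_k(X_k)=0$.
\item Prove the two-variable inequality
\[
\chi(x)\le \alpha\,x e^{\lambda x}+\beta y^2-\alpha y\quad (y\le x\le 1),\qquad \alpha=2\beta=\frac{2(\lambda e^{\lambda}-e^{\lambda}+1)}{2e^{\lambda}-1},
\]
and apply it with $x=Z-Z_k$, $y=f_k(X_k)$.
\end{enumerate}
After multiplying by $e^{\lambda Z_k}$ and summing over $k$, the three terms behave as follows. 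The first sums to at most $\alpha\,\mathbb{E}[Ze^{\lambda Z}]$. The linear term in $y$ vanishes, because its weight $e^{\lambda Z_k}$ is independent of $X_k$. The quadratic term contributes at most $\beta\sigma_*^2\,\mathbb{E}e^{\lambda Z}$.

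This gives $(\lambda-\alpha)F'-F\le\beta\sigma_*^2$. Comparing with the solution of the corresponding equation having $F'(0)=\mathbb{E}Z$, namely $\lambda\mathbb{E}Z+(\sigma_*^2+2\mathbb{E}Z)\phi(\lambda)$, yields Bousquet's bound. These coefficients are forced: they are the only ones for which that function solves the equation for all values of $\sigma_*^2$ and $\mathbb{E}Z$. Verifying the displayed inequality for them (quasi-convexity in $x$, plus a sign-change argument in $y$) is the step your proposal is missing.

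Two further statements in your outline are incorrect as written, independently of Step~3.
\begin{itemize}
\item The Step~4 differential inequality $(\lambda-\phi(\lambda))F'-F\le v\phi(\lambda)$ cannot be the right one. Its leading coefficient $\lambda-\phi(\lambda)=1+2\lambda-e^{\lambda}$ vanishes at $\lambda\approx 1.26$ and is negative beyond, so it cannot be integrated over all $\lambda\ge 0$. The correct coefficient, $\lambda-\alpha=(2e^\lambda-\lambda-2)/(2e^\lambda-1)$, stays positive.
\item The final conversion via $h(x)\ge x^2/(2(1+x/3))$ only gives $\mathbb{P}(Z-\mathbb{E}Z\ge s)\le e^{-s^2/(2(v+s/3))}$. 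The level at which this equals $e^{-t}$ is $\frac t3+\sqrt{\frac{t^2}{9}+2tv}$, strictly larger than $\sqrt{2tv}+\frac t3$. To get the stated $t/3$, use instead $\phi(\lambda)\le \lambda^2/(2(1-\lambda/3))$ for $0\le\lambda<3$ and optimize the Chernoff bound directly (the sub-gamma tail bound), which yields exactly $\sqrt{2tv}+t/3$.
\end{itemize}
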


\subsection{Moment bounds for the homogeneous Rademacher chaos}
\label{sec:hypercontractive}

Let $(T,d)$ be a pseudometric space and $\alpha$ -- a positive real number. A centered stochastic process $\{X_t, \ t\in T\}$ is called \emph{a process with $\psi_\alpha$ increments} if for all $t, s \in \mathcal{X}$ and $u>0$,
\begin{align}\label{eq:tail_prob}
    \mb P\l(\l|X_{t} - X_{s}\r| \geq u\,d(t, s) \r) \leq 2e^{-u^{\alpha}}\,.
\end{align}
When  $\alpha = 2$, the processes is typically referred to as sub-Gaussian, and when $\alpha = 1$, it is called sub-exponential. For a Gaussian process, a celebrated majorizing measures theorem due to M. Talagrand states that $\mb E \sup_{t \in T} \lvert X_t \rvert$ is equivalent to the generic chaining complexity $\gamma_2(T,d)$. For general $\alpha>0$, \citet{dirksen2015tail} proved upper bounds for the moments $\mb E \sup_{t\in T} \lvert X_t \rvert^p$ for arbitrary $p \geq 1$  using the ``truncated'' chaining complexity $\gamma_{\alpha, l}(T, d)$ defined in display \eqref{eq:gamma_alpha_l}.  
Next, we state a version of Dirksen's bound that we will apply to the homogeneous Rademacher chaos process. The proof generally follows the argument given in \citep{dirksen2015tail} but we state it below for completeness.
\begin{theorem}
\label{thm:generic_bound}
    Let $j\in N$ and assume that $\{X_t, t \in T\}$ is a $\psi_{2/j}$ process. Then for any $t_0 \in T$ and integer $p \geq 2$, \begin{multline}
    \label{eq:general_bound_after_bonami}
        \l(\mb E \sup_{t \in T} \lvert X_t - X_{t_0} \rvert^p \r)^{1/p} \\
        \leq  2^{j}(p-1)^{j/2} \sup_{t \in T}\|X_t - X_{t_0}\|_2+C2^{j}\gamma_{2/j, \l\lfloor \log_2 (pj) \r\rfloor}(T, d)
         \,. 
    \end{multline}
\end{theorem}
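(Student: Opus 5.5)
The plan is Dirksen's chaining argument, adapted to $\psi_{2/j}$ increments. Fix an admissible sequence $T_0\subset T_1\subset\cdots$ with $\mathrm{card}(T_0)=1$ and $\mathrm{card}(T_l)\le 2^{2^l}$ that nearly attains $\gamma_{2/j,l_p}(T,d)$, where $l_p=\lfloor\log_2(pj)\rfloor$. Let $\pi_l(t)$ be a closest point to $t$ in $T_l$. By the usual finite-set reduction (definition \eqref{def:sup}) we may take $T$ finite, so chains terminate. We then write
\[
X_t-X_{t_0}=(X_{\pi_{l_p}(t)}-X_{t_0})+\sum_{l\ge l_p}(X_{\pi_{l+1}(t)}-X_{\pi_l(t)}).
\]
The first piece is a supremum over the finite set $T_{l_p}$ of cardinality at most $2^{2^{l_p}}\le 2^{pj}$. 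The second piece is the chaining tail.

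\textbf{Head.} Bound the maximum over at most $2^{pj}$ points by the $\ell_q$-norm trick with $q=p$. Taking the sup outside,
\[
\mathbb E\max_{s\in T_{l_p}}|X_s-X_{t_0}|^p\le \mathrm{card}(T_{l_p})\,\sup_s\mathbb E|X_s-X_{t_0}|^p,
\]
and $\mathrm{card}(T_{l_p})^{1/p}\le 2^{j}$, which produces the factor $2^j$. To control $\|X_s-X_{t_0}\|_p$ by $(p-1)^{j/2}\|X_s-X_{t_0}\|_2$ I will invoke hypercontractivity (Bonami's inequality). This step needs more than the $\psi_{2/j}$ assumption: the statement is meant for the homogeneous Rademacher chaos of order $j$, where each $X_s-X_{t_0}$ is a chaos of degree $j$, so Bonami gives exactly $\|\cdot\|_p\le (p-1)^{j/2}\|\cdot\|_2$. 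I will therefore state this as part of the setting, namely that $X_t$ are degree-$j$ chaoses. This yields the first term, $2^j(p-1)^{j/2}\sup_t\|X_t-X_{t_0}\|_2$.

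\textbf{Tail.} For $l\ge l_p$ and $u\ge1$, apply \eqref{eq:tail_prob} to each pair $(\pi_{l+1}(t),\pi_l(t))$ at level $u\,2^{lj/2}d(\pi_{l+1}(t),\pi_l(t))$. The failure probability for one pair is $2\exp(-u^{2/j}2^l)$, and there are at most $2^{2^{l+2}}$ pairs. Since $2^l\ge pj/2$ for $l\ge l_p$, a union bound gives
\[
\sum_{l\ge l_p}2^{2^{l+2}}\cdot 2e^{-u^{2/j}2^l}\le C e^{-u^{2/j}pj/4}
\]
once $u\ge C'^{j/2}$. Outside this event, the tail sum is at most $u\sum_l 2^{lj/2}\bigl(d(t,\pi_l t)+d(t,\pi_{l+1}t)\bigr)\le C u\,\gamma_{2/j,l_p}$. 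Integrating this tail, $\mathbb E(\text{tail})^p\le \int p u^{p-1}e^{-u^{2/j}pj/4}du$ in units of $\gamma$, so the $p$-th root is bounded by $C\cdot C'^{j/2}$. That is at most $C2^j$ after adjusting constants, possibly by working with slightly larger $\mathrm{card}$ thresholds or shifting $l_p$ by a constant.

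\textbf{Main obstacle.} The delicate step is matching the union-bound entropy $2^{2^{l+2}}$ against $e^{-u^{2/j}2^l}$. The constant must come out as $C2^j$ and not $C^j$ with a worse base, and the moment integral must not introduce a $p$-dependence. Here I will use $2^{l_p}\asymp pj$, so that the exponent $u^{2/j}pj$ dominates both $p\log u$ and the cardinality. Finally, combine the two pieces via Minkowski's inequality.
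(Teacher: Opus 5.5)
Your proposal is correct and follows the paper's proof essentially step for step: a Minkowski split at level $l=\lfloor\log_2(pj)\rfloor$, the head bounded via $\mathrm{card}(T_l)^{1/p}\le 2^{j}$ plus Bonami's inequality, and the tail bounded by a union bound over chain links followed by tail integration, which the paper delegates to Dirksen's Lemmas A.4--A.5. The paper also invokes Bonami for the head, so your caveat that the $(p-1)^{j/2}\|\cdot\|_2$ term needs the chaos structure, not merely $\psi_{2/j}$ increments, is accurate.

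One bookkeeping point concerns the constant $C2^j$. The exponent $u^{2/j}pj/4$ you wrote requires $u^{2/j}\ge 8\log 2$, i.e.\ $u_*\approx 2.36^{j}$, which is not $O(2^j)$, and shifting $l_p$ does not change this. Instead, take $u_*=2^{j}$: then $u^{2/j}\ge 4>4\log 2$ still beats the entropy $2^{2^{l+2}}$ by a constant fraction of the exponent. Alternatively, use the paper's indexing, with weight $2^{nj/2}$ on the link $(\pi_{n-1}(t),\pi_n(t))$, threshold $u_*=2^{j/2}$, and chaining factor $1+2^{j/2}$.
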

A key property of this inequality is the fact that the term involving the chaining complexity does not depend on $p$. 
The main use case for Theorem \ref{thm:generic_bound} in this paper is the  homogeneous Rademacher chaos process defined as follows. Let $j, N$ be positive integers such that $j < N$, and let $\varepsilon_1,\ldots,\varepsilon_N$ be a sequence of i.i.d. Rademacher random variables. Let $T\subset \mb R^{N \choose j}$ and for every $J \in \mathcal{I}^N_j$, denote 
\[
\varepsilon_J := \Pi_{i \in J}\varepsilon_i.
\]
Then 
\[
T\ni t\mapsto X_t = \sum_{J \in \mathcal{I}^N_j}\varepsilon_J t_J
\]
is called the \emph{homogeneous Rademacher chaos} of order $j$. 
In view of Lemma \ref{lemma:tail_prob} (stated in section \ref{section:tech-results}), the process $\{X_t, t\in T\}$ is a $\psi_{2/j}$ process with distance $d$ defined by $d(x, y) = e\|X_t - X_s\|_2$. Moreover, it is easy to check that
\begin{align*}
    \|X_t - X_s\|_2^2 = \sum_{J \in \mathcal{I}^N_j}(t_J - s_J)^2,
\end{align*}
hence Theorem \ref{thm:generic_bound} holds with $d^2(t, s) = e^2\sum_{J \in \mathcal{I}^N_j}(t_J - s_J)^2$.


\begin{proof}[Proof of Theorem \ref{thm:generic_bound}]

    It suffices to consider the case where $\lvert T \rvert$ is finite (see the defintion \eqref{def:sup}). 
    Let $l \geq 1$ be an arbitrary integer and $\{T_{n}\}_{n \geq 1}$ be the optimal admissible sequence, i.e.
    \begin{align}
    \label{eq:admissible_seq}
        \gamma_{2/j, l}(T, d) = \sup_{t \in T} \sum_{n \geq l} 2^{nj/2} d(t, T_{n}).
    \end{align}
    Furthermore, let $\{\pi_n\}_{n \geq 1}$ be a sequence of functions $\pi_n: T \mapsto T_{n}$ defined by $\pi_n(t) = \argmin_{s \in T_{n}} d(t, s)$. 
    Let $t_0\in T$ and recall that $\mb EX_{t_0} =0$. In view of Minkowski's inequalities,
    \begin{multline}
    \label{eq:separate_estimation}
        \l(\mb E \sup_{t\in T} \lvert X_t - X_{t_0}\rvert^p \r)^{1/p} 
        \\
        \leq \l(\mb E \sup_{t\in T} \lvert X_t - X_{\pi_l(t)} \rvert^p \r)^{1/p} + \l(\mb E \sup_{t\in T} \lvert X_{\pi_l(t)} - X_{t_0} \rvert^p \r)^{1/p}\,.
    \end{multline}
    Recalling that $\lvert T_{l} \rvert \leq 2^{2^l}$, we deduce that
    \begin{align*}
        \mb E \sup_{t\in T} \lvert X_{\pi_l(t)} - X_{t_0} \rvert^p \leq \mb E \l[\sum_{x \in T_{l}} \lvert X_t - X_{t_0} \rvert^p\r] = \sum_{x \in T_{l}} \mb E\lvert X_t - X_{t_0} \rvert^p \\
        \leq \lvert T_{l} \rvert \sup_{t\in T} \mb E \lvert X_t - X_{t_0} \rvert^p = 2^{2^l} \sup_{t\in T} \mb E \lvert X_t - X_{t_0} \rvert^p\,.
    \end{align*}
    Moreover, Bonami's inequality \citep[][Theorem 3.2.2]{de2012decoupling} implies that
    \[
    \mb E^{1/p} \lvert X_t - X_{t_0} \rvert^p \leq (p-1)^{j/2} \mb E^{1/2} \lvert X_t - X_{t_0} \rvert^2.
    \]
    Therefore, 
    \begin{align*}
        \l(\mb E \sup_{t\in T} \lvert X_{\pi_l(t)} - X_{t_0} \rvert^p \r)^{1/p} \leq 2^{2^l/p} p^{j/2}\sup_{t\in T}\|X_t - X_{t_0}\|_2.
    \end{align*}
    To estimate the first term in display \eqref{eq:separate_estimation}, consider the telescoping sum
    \begin{align*}
        X_t - X_{\pi_l(t)} = \sum_{n \geq l} \l(X_{\pi_{n+1}(t)} - X_{\pi_n(t)}\r).
    \end{align*}
    In view of inequality \eqref{eq:tail_prob},
    \begin{align*}
        \mb P\l(\lvert X_{\pi_{n+1}(t)} - X_{\pi_n(t)}\rvert > u2^{nj/2}d(\pi_{n+1}(x), \pi_n(t)) \r) \leq 2 e^{-2^n u^{2/j}}\,.
    \end{align*}
    Let $\Omega_{u, p}$ denote the event
    \begin{align*}
        \l\{\text{For any }n > l, t\in T, \l\lvert X_{\pi_n(t)} - X_{\pi_{n-1}(t)} \r\rvert \leq u2^{nj/2}d(\pi_{n}(t), \pi_{n-1}(t))  \r\}\,.
    \end{align*}
    Mimicking the proof of Lemma A.4 in \cite{dirksen2015tail} without setting a specific value for $l$, we easily find that $\mb P \l(\Omega_{u,p}^c\r) \leq C\exp{-2^{l-1}u^{2/j}}$ for $u \geq 2^{j/2}$, where $C \leq 16$ is an absolute constant. On event $\Omega_{u,p}$,
    \begin{multline*}
        \l\lvert \sum_{n > l} X_{\pi_n(t)} - X_{\pi_{n-1}(t)} \r\rvert \leq \sum_{n > l} \l\lvert X_{\pi_n(t)} - X_{\pi_{n-1}(t)} \r\rvert \\
        \leq u \sum_{n > l}2^{nj/2} d(\pi_n(t), \pi_{n-1}(t))\,.
    \end{multline*}
    Therefore, for every $t\in T$
    \begin{multline*}
        \sum_{n > l}2^{nj/2} d(\pi_n(t), \pi_{n-1}(t)) \leq \sum_{n > l} 2^{nj/2}d(t, \pi_n(t)) + \sum_{n > l} 2^{nj/2}d(t, \pi_{n-1}(t)) \\
        \leq (1 + 2^{j/2}) \sum_{n \geq l} 2^{nj/2} d(t, \pi_n(t)) \leq \l(1 + 2^{j/2}\r) \gamma_{2/j, l}(T, d)\,.
    \end{multline*}
    We conclude that for all $u \geq 2^{j/2}$,
    \begin{align*}
        \mb P \l(\sup_{t\in T} \lvert X_t - X_{\pi_l(t)} \rvert > u(1 + 2^{j/2})\gamma_{2/j, l}(T, d) \r) \leq C e^{-2^{l-1}u^{2/j}}
    \end{align*}
    Invoking Lemma \ref{lemma:compute_expectation}, we deduce from the display above that
    \begin{align*}
        \l(\mb E \sup_{t\in T} \l\lvert X_t - X_{\pi_l(t)}\r\rvert^p\r)^{1/p} \leq \l(1 + 2^{j/2}\r) \gamma_{2/j, l}(T, d)\l(\frac{C^{1/p} (jp)^{j/2 + 1/(2p)}}{2^{jl/2 + 1/p}} + 2^{j/2}\r)\,,
    \end{align*}
    which provides the upper bound for the first term on the right-hand side of display \eqref{eq:separate_estimation}. The result follows by taking $l = \l\lfloor \log_2 p + \log_2 j \r\rfloor$.

\end{proof}
\begin{remark}
     While this result doesn't capture mixed tail behavior of $\sup_{t\in T}X_t$ (e.g. see \citep[Theorem 14]{boucheron2005moment}, \cite{adamczak2006moment}), it does provide simple \emph{computable} upper bounds with better dependence on the order $m$, cf. \cite[Corollary 5]{boucheron2005moment}. 
\end{remark}

\subsection{Elliptically symmetric distributions}
\label{sec:elliptical-symmetric-distribution}

A random vector $X \in \mb R^d$ is said to have an \textit{elliptically symmetric distribution} with parameters $\mu$, $\Sigma$, and $F$, denoted $X \sim \mathcal{E}\l(\mu, \Sigma, F\r)$, if
\begin{align*}
    X \overset{d}{=} \mu + RAU\,,
\end{align*}
where $\overset{d}{=}$ denotes equality in distribution, $\mu \in \mb R^d$ is the mean of $X$, $U \in \mb R^d$ has uniform distribution on the unit sphere $\mathcal{S}^{d-1}$, $A \in \mb R^{d \times d}$ is a fixed matrix such that $\Sigma = AA^\top$, and $R \in \mb R$ is a random variable independent of $U$ with distribution function $F$. Note that the choice of $A$ may not be unique. Thus, we allow $A$ to be any matrix satisfying such property. Moreover, under the assumption that $\mb E \|X\|^2 < \infty$, we will always assume that $\Sigma = \cov(X):=\mb E(X-\mu)(X-\mu)^T$. Distribution of the centered vector $X-\mu$ is uniquely defined via its characteristic function
\begin{align*}
    t \mapsto \mb E e^{i\langle t,X-\mu\rangle}=\psi(t^\top \Sigma t), \quad t \in \mb R^d\,,
\end{align*}
where $\psi : \mb R_+ \mapsto \mb R$ is called the \emph{characteristic generator} of $X$.
 For example, multivariate Gaussian distribution $\mathcal{N}(0, \Sigma)$ is elliptically symmetric with $R = \sqrt{Z}$ where $Z$ has $\chi^2$ distribution with $d$ degrees of freedom. In this case, the characteristic generator is $\psi(x) = e^{-x/2}$.

Next, we state several useful properties of random vectors with elliptically symmetric distributions. Let $X \sim \mathcal{E}(0, I_d, F)$ and assume that $u \in \mathcal{S}^{d-1}$. Then the distribution of $\dotp{X}{u}$ is independent of $u$. Indeed, for any two unit vectors $u_1, u_2 \in \mb R^d$, there exists a unitary matrix $Q \in \mb R^{d\times d}$ such that $u_1 = Qu_2$, hence 
\[
\dotp{X}{u_1} \overset{d}{=} R\dotp{U}{u_1} = R\dotp{U}{Qu_2} = R\dotp{Q^\top U}{u_2} \overset{d}{=} \dotp{X}{u_2}.
\]
Moreover, it is easy to conclude that the characteristic function of $\dotp{X}{u}$ satisfies 
\[
\mb E e^{it\dotp{\Sigma^{-1/2}(X-\mu)}{u}} = \mb E e^{i\dotp{\Sigma^{-1/2}(X-\mu)}{tu}} = \psi\l((tu)^\top(tu)\r) = \psi(t^2)
\]
for any $t \in \mb R$. In section \ref{sec:THL}, we are interested in distributions satisfying the following condition: there exists $\alpha > 0$ such that $\psi(t^2) = o\l(\frac{1}{|t|^\alpha}\r)$ as $|t| \rightarrow \infty$. 
Let us present some examples satisfying this condition. The most obvious one is the Gaussian distribution, where $\psi(t^2) = e^{-t^2/2}$. Another example is the multivariate t-distribution with mean $\mu$, covariance matrix $\Sigma$, and number of degrees of freedom $\nu$. For $\nu > 2$, this distribution is often defined as the distribution of $\frac{Y}{\sqrt{Z_\nu / \nu}} + \mu$ where $Y \in \mb R^d$ has Gaussian distribution $\mathcal{N}(0, \frac{\nu}{\nu-2}\Sigma)$ and $Z_\nu \in \mb R$ has $\chi^2$ distribution with $\nu$ degrees of freedom and is independent of $Y$. It is easy to see that multivariate t-distribution is also elliptically symmetric. In this case, simple analysis shows that
\begin{multline*}
    \mb E e^{it\dotp{\Sigma^{-1/2}(X-\mu)}{u}} = \mb E_{Z_\nu} \mb E_Y e^{\frac{it}{\sqrt{Z_\nu / \nu}}\dotp{\Sigma^{-1/2}Y}{u}} = \mb E_{Z_\nu} e^{-C_{\nu}t^2/Z_\nu} \\
    \leq C'_{\nu}\mb E_{Z_\nu} \l(\frac{t^2}{Z_{\nu}}\r)^{-1} \leq  \frac{C_{\nu}''}{t^2}\,,
\end{multline*}
where we used the inequality $e^{-x} \leq x^{-1}$ for all $x > 0$. We conclude that the multivariate t-distribution satisfies the desired condition when $\nu > 2$.

Additional technical results for densities of elliptically symmetric distributions are given in section \ref{section:tech-results}.

\section{Proofs of the main results}
\label{sec:proof}

In this section, we present the key steps of the proofs of the main theorems. To streamline the arguments, we place details behind several technical lemmas in separate subsections.

\subsection{Proof of Theorem \ref{thm: thm2}}

The main idea is to reduce the bounds for degenerate U-statistics to the bounds on Rademacher chaos processes via the symmetrization argument. The key tool here is the symmetrization inequality for degenerate U-processes due to  \citet{sherman1994maximal}; also see \citep[Theorem 5.2]{song2019approximating} for the modern version used below. 
Specifically, this inequality states the following: let $\varepsilon_1,\ldots,\varepsilon_N$ be a sequence of i.i.d. Rademacher random variables. For every $J \in \mathcal{I}^N_j$, set 
\(\varepsilon_J := \Pi_{i \in J}\varepsilon_i.\) Let $h_0 \in \mathcal{H}_m$ be an arbitrary kernel function. Then
    \begin{multline*}
        \mb E \sup_{h \in \mathcal{H}_m} \lvert V_{N,j}(h) \rvert ^ p \leq 2^p\mb E \lvert V_{N,j}(h_0) \rvert ^ p \\
        + 2^{(j+1)p} \mb E_X \mb E_\varepsilon \sup_{h \in \mathcal{H}_m} \l\lvert \frac{{m \choose j}^{1/2}}{{N \choose j}^{1/2}} \sum_{J \in \mathcal{I}^N_j} \varepsilon_J \l(h^{(j)} -h_0^{(j)}\r)\l(X_i, i \in J\r)\r\rvert ^ p\,.
    \end{multline*}
    Notably, this inequality allows one to avoid decoupling techniques that result in suboptimal dependence on the index $j$. It was shown in the proof of Theorem 3.1 in \cite{minsker2023u} that
    \begin{align*}
        \mb E \lvert V_{N,j}(h_0) \rvert ^ p \leq C^{pj}p^{pj/2}\l(\Sigma_{\mathcal{H}_m} + p^{1/2}\l(\frac{m}{j}\r)^{j/2}\l(\frac{j}{N}\r)^{1/2}\r)^p\,.
    \end{align*}
    Next, inequality \eqref{eq:general_bound_after_bonami} of Theorem \ref{thm:generic_bound} combined with Jensen's inequality implies that
    \begin{multline}
    \label{eq: thm2-for-rademacher}
        \mb E_\varepsilon \sup_{h \in \mathcal{H}_m} \l\lvert \frac{{m \choose j}^{1/2}}{{N \choose j}^{1/2}} \sum_{J \in \mathcal{I}^N_j} \varepsilon_J \l(h^{(j)} -h_0^{(j)}\r)\l(X_i, i \in J\r)\r\rvert ^ p 
        \\
        \leq C_1^{pj}(p-1)^{pj/2} \sup_{h \in \mathcal{H}_m} \l(\mb E_\varepsilon \l\lvert \frac{{m \choose j}^{1/2}}{{N \choose j}^{1/2}} \sum_{J \in \mathcal{I}^N_j} \varepsilon_J \l(h^{(j)} -h_0^{(j)}\r)\l(X_i, i \in J\r)\r\rvert ^ 2\r) ^ {p/2} 
        \\
        + C_2^{pj}\l(\gamma_{2/j, l}\l(\mathcal{H}_m, d^{(1)}_{N,j}\r)\r)^p,
    \end{multline}
    where the pseudo-distance $d_{N,j}^{(1)}$ is defined in \eqref{eq:d_N}. In view of the fact that
    \begin{multline*}
        \mb E_\varepsilon \l\lvert \frac{{m \choose j}^{1/2}}{{N \choose j}^{1/2}} \sum_{J \in \mathcal{I}^N_j} \varepsilon_J \l(h^{(j)} -h_0^{(j)}\r)\l(X_i, i \in J\r)\r\rvert ^ 2 \\
        = \frac{{m \choose j}}{{N \choose j}} \sum_{J \in \mathcal{I}^N_j} \l(\l(h^{(j)} -h_0^{(j)}\r)\l(X_i, i \in J\r)\r)^2\,,
    \end{multline*}
    we can further bring the upper bound to the form
    \begin{multline*}
        \mb E \sup_{h \in \mathcal{H}_m} \lvert V_{N,j}(h) \rvert ^ p 
        \\
        \leq C_1^{pj}(p-1)^{pj/2} \mb E \sup_{h \in \mathcal{H}_m} \l(\frac{{m \choose j}}{{N \choose j}} \sum_{J \in \mathcal{I}^N_j} \l(\l(h^{(j)} -h_0^{(j)}\r)\l(X_i, i \in J\r)\r)^2\r)^{p/2} 
        \\
        + C_2^{pj} \mb E \l(\gamma_{2/j, l}\l(\mathcal{H}_m, d_{N,j}^{(1)}\r)\r)^p
    \end{multline*}
    Next, we are going to estimate
    \begin{align*}
        \mb E \sup_{h \in \mathcal{H}_m} \l(\frac{{m \choose j}}{{N \choose j}} \sum_{J \in \mathcal{I}^N_j} \l(\l(h^{(j)} -h_0^{(j)}\r)\l(X_i, i \in J\r)\r)^2\r)^{p/2}\,.
    \end{align*}
    In view of Hoeffding's representation \citep{hoeffding1994probability},
    \begin{align*}
        \frac{1}{{N \choose j}}\sum_{J \in \mathcal{I}^N_j} \l(\l(h^{(j)} -h_0^{(j)}\r)\l(X_i, i \in J\r)\r)^2 = \frac{1}{N!} \sum_\pi w_{\pi}(h)\,,
    \end{align*}
    where the sum is taken over all permutations $\pi: [N] \mapsto [N]$, and
    \begin{align*}
        w_{\pi}(h) = \frac{1}{k}\sum_{i=0}^{k-1} \l(\l(h^{(j)} -h_0^{(j)}\r)\l(X_{\pi(ij+1)}, X_{\pi(ij+2)}, \ldots, X_{\pi(ij+j)}\r)\r)^2 
     \end{align*}
    for $k = \l\lfloor N/j \r\rfloor$. Jensen's inequality implies that
    \begin{multline*}
        \mb E \sup_{h \in \mathcal{H}_m} \l(\frac{{m \choose j}}{{N \choose j}} \sum_{J \in \mathcal{I}^N_j} \l(\l(h^{(j)} -h_0^{(j)}\r)\l(X_i, i \in J\r)\r)^2\r)^{p/2} 
        \\
        \leq \mb E \l[\frac{1}{N!}\sum_{\pi}\sup_{h \in \mathcal{H}_m} \l({m \choose j} w_{\pi}(h)\r)^{p/2}\r] 
        = \mb E \sup_{h \in \mathcal{H}_m} \l(\frac{{m \choose j}}{{\l\lfloor\frac{N}{j}\r\rfloor}}\sum_{i = 1}^{\lfloor N/j\rfloor} W_{i}(h)\r)^{p/2}\,,
    \end{multline*}
    where $W_{i}(h) = \l(\l(h^{(j)} -h_0^{(j)}\r)(X_{(i-1)j + 1}, X_{(i-1)j + 2}, \ldots, X_{ij})\r)^2$. 
    Therefore, we reduced the problem to the task of estimating the moments of suprema of sums of nonnegative random variables. This task is readily handled via Lemma \ref{lemma:sup-rosenthal}, which implies that
    \begin{multline*}
        \mb E \sup_{h \in \mathcal{H}_m} \l(\frac{{m \choose j}}{{\l\lfloor\frac{N}{j}\r\rfloor}}\sum_{i = 1}^{\lfloor N/j\rfloor} W_{i}(h)\r)^{p/2} 
        \\
        \leq {m \choose j}^{p/2}\l[\frac{C_1}{\lfloor N/j\rfloor}\mb E \sup_{h \in \mathcal{H}_m} \sum_{i=1}^{{\lfloor N/j\rfloor}} W_{i}(h) + p\frac{C_2}{\lfloor N/j\rfloor} \l(\mb E \max_i \sup_{h \in \mathcal{H}_m} W^{p}_{h,i}\r)^{1/p}\r]^{p/2}.
    \end{multline*}
    Recall that by the definition \eqref{eq:pi}, the function $h^{(j)}$ is a linear combination of $2^j$ terms of the form $\Pi_{i \in I} \delta_{x_i} P_X^{m - \lvert I \rvert}$ for all choices of $I \subseteq [j]$. Hence, $W_{i}(h) \leq 2^{2j+2}\|h\|^2_\infty$. Since $\|h\|_\infty\leq 1$ by assumption, we conclude that 
    \[
    \max_i \sup_{h \in \mathcal{H}_m} W^{p}_{h,i} \leq  \l(2^{2j+2}\sup_{h \in \mathcal{H}_m} \|h\|^2_\infty\r)^{p} = 4^{p(j+1)}.
    \]
    Estimation of $\mb E \sup_{h \in \mathcal{H}_m} \frac{1}{\lfloor 
 N/j\rfloor}\sum_{i=1}^{\lfloor N/j \rfloor} W_{i}(h)$ relies on the following lemma; its proof is given in section \ref{proof:lemma:E_sup_sum_W}.
    \begin{lemma}
    \label{lemma:E_sup_sum_W}
    Let the metric $d^{(2)}_{N,j}$ be defined via \eqref{eq:d_Nj}. Then
    \begin{align*}
    \mb E \sup_{h \in \mathcal{H}_m} \frac{1}{\lfloor N/j\rfloor}\sum_{i=1}^{\lfloor N/j \rfloor} W_{i}(h) \leq \frac{C_1 2^{2j}}{\lfloor N/j \rfloor^{1/2}} \mb E \gamma_2 (\mathcal{H}_m, d^{(2)}_{N,j}) + \frac{C_2\Sigma^2_{\m H_m}}{{m \choose j}}.
    \end{align*} 
    \end{lemma}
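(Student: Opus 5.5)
We will bound $\mb E \sup_h \frac{1}{k}\sum_{i=1}^k W_i(h)$, with $k=\lfloor N/j\rfloor$, by centering and then symmetrizing. Write
\[
\frac1k\sum_{i=1}^k W_i(h) \leq \sup_{h}\mb E W_1(h) + \frac1k\sum_{i=1}^k\l(W_i(h)-\mb E W_i(h)\r).
\]
The blocks $(X_{(i-1)j+1},\ldots,X_{ij})$, $i=1,\ldots,k$, are i.i.d., so $W_1(h),\ldots,W_k(h)$ are i.i.d. for each fixed $h$.

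\textbf{Deterministic term.} We have $\mb E W_1(h)=\mb E\l((h-h_0)^{(j)}\r)^2$. Since $h\mapsto h^{(j)}$ is linear, this equals the squared $L_2$ norm of the $j$-th Hoeffding projection of $h-h_0$. By the orthogonality of the Hoeffding decomposition, $\var(h-h_0)\geq {m\choose j}\,\mb E\l((h-h_0)^{(j)}\r)^2$. Combined with $\var(h-h_0)\leq 2\var(h)+2\var(h_0)\leq 4\Sigma^2_{\m H_m}$, this gives
\[
\sup_h \mb E W_1(h)\leq \frac{4\Sigma^2_{\m H_m}}{{m\choose j}},
\]
which is the second term of the lemma.

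\textbf{Empirical process term.} By the standard symmetrization inequality, the centered part is at most
\[
2\,\mb E\sup_h\l|\frac1k\sum_{i=1}^k \eps_i W_i(h)\r|,
\]
with independent Rademacher signs $\eps_i$. Conditionally on the data, this is a sub-Gaussian process in $h$, since it is a Rademacher sum. Its increment metric is
\[
\frac1k\Big(\sum_i (W_i(h)-W_i(h'))^2\Big)^{1/2}.
\]
Use $a^2-b^2=(a-b)(a+b)$ together with $|(h-h_0)^{(j)}|\leq 2^{j+1}$ (from the expansion of $h^{(j)}$ into $2^j$ terms, as noted above). This gives
\[
|W_i(h)-W_i(h')|\leq 2^{j+2}\,\big|(h^{(j)}-h'^{(j)})(X_{(i-1)j+1},\ldots,X_{ij})\big|.
\]
Hence the increment metric is at most $2^{j+2}k^{-1/2}\,d^{(2)}_{N,j}(h,h')$.

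Talagrand's generic chaining bound for sub-Gaussian processes then gives
\[
\mb E_\eps\sup_h\l|\frac1k\sum_i\eps_iW_i(h)\r| \leq C\,2^{j+2}k^{-1/2}\gamma_2(\m H_m,d^{(2)}_{N,j}) + \mb E_\eps\l|\frac1k\sum_i\eps_iW_i(h_0)\r|.
\]
Here $\gamma_2$ is monotone under domination of metrics, which lets us pass from the increment metric to $d^{(2)}_{N,j}$. The anchor term vanishes because $W_i(h_0)=0$. Taking $\mb E_X$ yields the first term with constant $C_1 2^{2j}$ (indeed with $2^{j}$, which is even better).

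\textbf{Main obstacle.} The step that needs care is the contraction/Lipschitz comparison for the squared increments. We need the constant to be exactly $2^{O(j)}$, and we need a chaining bound that holds with a random metric conditionally on $X$ before $\mb E_X$ is taken. Both are handled as above, using the sup-norm bound on $h^{(j)}$ and the conditional sub-Gaussianity of Rademacher sums.
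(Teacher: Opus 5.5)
Your proof is correct and follows the paper's argument: you bound the mean term through orthogonality of the Hoeffding decomposition, then handle the centered part by symmetrization followed by the generic chaining bound for sub-Gaussian processes, applied conditionally on the data. The only difference is that you compare the increment metric of $\frac1k\sum_i\eps_iW_i(h)$ directly to $2^{j+2}k^{-1/2}d^{(2)}_{N,j}$ using $a^2-b^2=(a-b)(a+b)$, whereas the paper invokes Talagrand's contraction inequality for $x\mapsto x^2$ on a bounded interval; the two steps are equivalent here, and yours gives the slightly better factor $2^{j}$.
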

    Combined with the previously established estimates, this lemma yields that 
    \begin{multline}
        \label{eq:h_j_square}
        \mb E \sup_{h \in \mathcal{H}_m} \l(\frac{{m \choose j}}{{N \choose j}} \sum_{J \in \mathcal{I}^N_j} \l(\l(h^{(j)} -h_0^{(j)}\r)\l(X_i, i \in J\r)\r)^2\r)^{p/2} \leq 
        C^{p}_1 \Sigma_{\m H_m}^{p}
        \\
        + C^{pj}_2 \l(\frac{m}{j}\r)^{pj/2}\l(\frac{j}{N}\r)^{p/4} \l( \l(p\sqrt{\frac{j}{N}}\r)^{p/2} + \l(\mb E \gamma_2 (\mathcal{H}_m, d^{(2)}_{N,j})\r)^{p/2}\r).  
    \end{multline}
    Therefore,
    \begin{multline*}
        \mb E \sup_{h \in \mathcal{H}_m} \lvert V_{N,j}(h) \rvert ^ p \leq  
        C_1^{pj} \mb E \l(\gamma_{2/j, l}\l(\mathcal{H}_m, d^{(1)}_N\r)\r)^p 
        \\
        +(C_2 p)^{pj/2} \l[\Sigma^p_{\m H_m} 
        + \l(\frac{m}{j}\r)^{pj/2}\l(\frac{j}{N}\r)^{p/4} \l( \l(p\sqrt{\frac{j}{N}}\r)^{p/2} + \l(\mb E \gamma_2 (\mathcal{H}_m, d^{(2)}_{N,j})\r)^{p/2}\r)\r].
    \end{multline*}
    The equivalent expression in terms of $A_1(p,j),A_2(p,j)$ and $A_3(p,j)$ follows from the display above.

\subsection{Proof of Theorem \ref{thm:Bernstein-bound}}
\label{section:proof:bernstein-bound}

The first ingredient of the proof is the following lemma that gives explicit upper bounds on the moments of random chaining complexities for Euclidean classes. Recall that 
\begin{multline*}
 \Gamma(j,m,N,V) =  \Sigma_{\m H_m} V^{1/2}\l(\frac{j}{m}\r)^{j/2} \log^{1/2} \l(\frac{C_1'}{\Sigma_{\m H_m}^2}{m \choose j}\r)
 \\
 \bigvee  V\l(\frac{j}{N}\r)^{1/2} \log\l(\frac{C_1'}{\Sigma_{\m H_m}^2}{m \choose j}\r)
\end{multline*}
and that  
\[
D(p,j):=D(p,j,m,N) = \l[\frac{pj}{N}\bigvee \frac{Vj}{N}\log\l(\frac{C_1'}{\Sigma_{\m H_m}^2}{m \choose j}\r)\bigvee \Sigma^2_{\m H_m}\l(\frac{j}{m}\r)^{j} \r]^{1/2}.
\]
\begin{lemma}
\label{lemma:1}
    Let $\mathcal{H}_m$ be a Euclidean class with exponent $V$. Then for any $j \in [m]$, 
    \begin{align}\label{eq:E_gamma_2}
    \mb E \gamma_{2} \l(\mathcal{H}_m, d^{(2)}_{N,j}\r) \leq C^j\Gamma(j,m,N,V).
    \end{align}
    Moreover, for any $p \geq 2$ and $2 \leq j \leq m$,  
    \begin{equation}
    \label{eq:truncated_gamma_bound_1}
    \l(\mb E \l(\gamma_{2/j} (\mathcal{H}_m, d^{(1)}_{N,j}) \r)^p\r)^{1/p} 
    \leq \l(C_1V \l[j\log\l(\frac{C_2m}{j}\r) \bigvee \log\l( \frac{e}{\Sigma_{\m H_m}}\r)\r]\r)^{j/2}D(p,j).
    \end{equation}
    Finally, if $p \geq 4V$, then  
    \begin{align}
    \label{eq:truncated_gamma_bound_2}
        \gamma_{2/j, \lfloor \log_2 (pj) \rfloor}(\mathcal{H}_m, d^{(1)}_{N,j}) \leq 
        (C_5 pj)^{j/2} e^{-pj/V}
    \end{align}
    with probability $1$.
\end{lemma}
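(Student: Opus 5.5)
The plan is to bound all three quantities through the classical Dudley entropy integral, i.e.
\[
\gamma_{\alpha}(T,d)\leq C_\alpha\int_0^{\mathrm{diam}(T)} H^{1/\alpha}(T,d,\eps)\,d\eps ,
\]
with $C_\alpha\leq C^{1/\alpha}$, which is the form we need when $\alpha=2/j$. We then feed in the Euclidean covering bound \eqref{eq:euclidean}.

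\textbf{Transfer of the covering bound.} Both $d^{(1)}_{N,j}$ and $d^{(2)}_{N,j}$ are $L_2(Q)$-type norms of $h_1^{(j)}-h_2^{(j)}$ for an empirical measure $Q$ on $S^j$. By \eqref{eq:pi}, $h^{(j)}$ is a signed sum of $2^j$ terms $\prod_{i\in I}\delta_{x_i}P^{m-|I|}h$. Each such term is an average of $h$ with respect to a product measure, so its $L_2(Q)$ distance is dominated by an $L_2(Q')$ distance of $h$, where $Q'=Q\otimes P^{m-|I|}$. Consequently
\[
N\l(\m H_m,d^{(2)}_{N,j},2^j\eps\r)\leq \prod_{I}N\l(\m H_m,L_2(Q_I),\eps\r)\leq \l(C/\eps\r)^{2^jV}.
\]
This is too crude in $j$. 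Instead I would rewrite $h^{(j)}=\sum_{I}(-1)^{j-|I|}\pi_I h$, choose a single mixture measure $\bar Q=2^{-j}\sum_I Q_I$, and use $\|h_1^{(j)}-h_2^{(j)}\|_{L_2(Q)}\leq 2^j\|h_1-h_2\|_{L_2(\bar Q)}$. This gives $H(\eps)\leq V\log(C2^j/\eps)$.

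\textbf{Diameter bounds.} The diameters involve the random quantities $\sup_h\frac1{k}\sum_i (h^{(j)})^2$ and $\sup_h {N\choose j}^{-1}\sum_J(h^{(j)})^2$. Their expectations are of order
\[
\Sigma^2{m\choose j}^{-1}+\frac{Vj}{N}\log(\cdots).
\]
This follows from Lemma \ref{lemma:E_sup_sum_W} together with a fixed-point argument, using $\mb E (h^{(j)})^2\leq \Sigma^2/{m\choose j}$, and ${m\choose j}^{-1}\leq (j/m)^j$ up to $C^j$. The moments of order $p$ follow from Lemma \ref{lemma:sup-rosenthal}, exactly as in the proof of Theorem \ref{thm: thm2}. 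This produces the quantity $D(p,j)$ as an upper bound for $\|\mathrm{diam}\|_p$. The $\sqrt{pj/N}$ term comes from the envelope term $p\|M\|_p/\lfloor N/j\rfloor$.

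\textbf{Integrals.} For \eqref{eq:E_gamma_2}, with $\alpha=2$ and diameter $\delta$, we have
\[
\int_0^\delta\sqrt{V\log(C2^j/\eps)}\,d\eps\lesssim \delta\sqrt{V\log(C^j/\delta)}.
\]
We take expectations using concavity of $x\mapsto x\sqrt{\log(C/x)}$ for small $x$ (Jensen), and plug in $\mb E\delta^2\lesssim$ the two-term expression above. This yields $\Gamma$, because the two pieces of $\delta$ produce the two maxima in \eqref{eq:gamma}. For \eqref{eq:truncated_gamma_bound_1}, with $\alpha=2/j$,
\[
\int_0^\delta (V\log(C^j/\eps))^{j/2}\,d\eps\lesssim \delta\,(CV\log(C^j/\delta))^{j/2}.
\]
We bound $\log(1/\delta)$ crudely using $\delta\gtrsim$ either $\Sigma(j/m)^{j/2}$ or by truncation, which gives $j\log(Cm/j)\vee\log(e/\Sigma)$. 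The $p$-th moment is then controlled by $\|\delta\|_p\leq D(p,j)$.

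\textbf{Main obstacle: \eqref{eq:truncated_gamma_bound_2}.} In the tail functional we may take $T_n$ to be $\eps_n$-nets with $H(\eps_n)\leq 2^n$, that is, $\eps_n=C2^j\exp(-2^n/V)$. Then
\[
\gamma_{2/j,l}\leq\sum_{n\geq l}2^{nj/2}C2^je^{-2^n/V}.
\]
With $2^l\approx pj$ and $p\geq4V$ the terms decay super-geometrically, so the sum is dominated by its first term, $(Cpj)^{j/2}e^{-pj/V}$, deterministically. The main technical pain is making the diameter and log factors uniform in $j$ (the $C^j$ bookkeeping), and the Jensen step for $\log(1/\delta)$ when $\delta$ is random. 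For the latter I would first handle the event $\delta\geq\Sigma(j/m)^{j/2}$ and bound its complement trivially.
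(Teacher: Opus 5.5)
Your proposal is correct and follows the paper's proof in all three parts. It uses Dudley-type bounds with the entropy of $d^{(1)}_{N,j}$ and $d^{(2)}_{N,j}$ transferred to a single mixture measure; your Cauchy--Schwarz over all $2^j$ subsets gives the factor $2^j$ rather than the paper's $2^j(j+1)$. It bounds the diameter moments via Lemma~\ref{lemma:E_sup_sum_W} with a Koltchinskii-type fixed point and Lemma~\ref{lemma:sup-rosenthal}, and controls the logarithm with a deterministic threshold ($D(2,j)\geq \Sigma_{\m H_m}(j/m)^{j/2}$). For \eqref{eq:truncated_gamma_bound_2} your chaining sum over nets is just the discrete form of the paper's integral up to $D_l(d)$ combined with Lemma~\ref{lemma:incomplete-gamma-bound}.

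One caveat, shared with the paper: $|T_n|\leq 2^{2^n}$ forces $\eps_n\asymp C^j 2^{-2^n/V}$, and $2^l\in(pj/2,pj]$. The computation therefore really yields $(Cpj)^{j/2}e^{-c\,pj/V}$ with $c=(\log 2)/2$, not $e^{-pj/V}$.
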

\noindent The proof is given in appendix \ref{proof:lemma:1}. Note that the estimates of Lemma \ref{lemma:1} remain valid if we replace $\Sigma_{\m H_m}$ by its upper bound $1$. 
Combined with Theorem \ref{thm: thm2}, it implies the bounds stated in Corollary \ref{corollary:1}. Namely, for $j=2,\ldots,m$, 
\[
\l(\mb E \sup_{h \in \mathcal{H}_m} \lvert V_{N,j}(h) \rvert^p \r)^{1/p}\leq C^j\max\l(B_1(p,j),B_2(p,j) \r)
\]
where $B_1(p,j),B_2(p,j)$ are defined as
\begin{align*}
B_1(p,j) &= p^{j/2}\l(\frac{m}{j}\r)^{j/2} D(p,j), 
\\
B_2(p,j) &= (pj)^{j/2} e^{-pj/V}, \ p\geq 4V
\end{align*}
and $B_2(p,j) = \l(C_1V j\log\l(\frac{C_2m}{j}\r) \r)^{j/2} D(p,j)$ if $p<4V$. 
Our first goal is to upper bound 
\[
S_{m,p}:=\mb E^{1/p}\sup_{h\in \m H_m}\l|\sum_{j=2}^m \frac{{m \choose j}}{{N \choose j}} \sum_{J \in \mathcal{I}^N_j} h^{(j)}\l(X_i,\  i \in J\r)\r|^p.
\]
\begin{lemma}
\label{lemma:higher-order}
    Assume that $\frac{(V\vee p)m^2}{N}\leq c$ for a sufficiently small absolute constant $c>0$. Then
    \[
    S_{m,p} \leq C\l( \frac{V}{N}\log^{3/2}(m) + \frac{pm}{N}\sqrt{1+\frac{Vm^2\log(m)}{N}}\r).
    \]
\end{lemma}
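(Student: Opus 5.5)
By Minkowski's inequality, it suffices to control each order $j\geq 2$ of the Hoeffding expansion separately:
\[
S_{m,p}\leq \sum_{j=2}^m \frac{{m\choose j}^{1/2}}{{N\choose j}^{1/2}}\,\mb E^{1/p}\sup_{h\in\m H_m}|V_{N,j}(h)|^p .
\]
The prefactor is at most $\l(\frac{m}{N}\r)^{j/2}\l(\frac{e\,m}{j}\r)^{j/2}\l(\frac{j}{N}\r)^{j/2}$ up to $C^j$, using ${m\choose j}\leq (em/j)^j$ and ${N\choose j}\geq (N/j)^j$. I will bound each moment with Corollary \ref{corollary:1} and sum the resulting series. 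The main claim is that, under the hypothesis $(V\vee p)m^2/N\leq c$, the sum is dominated by its $j=2$ term. The $j=2$ term contributes exactly the two summands of the stated bound, while the terms with $j\geq 3$ decay geometrically with ratio of order $C\sqrt{(V\vee p)m^2/N}\leq C\sqrt c<1/2$.

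\textbf{The $B_1$ contribution.} The $j$-th summand is
\[
C^j\frac{{m\choose j}^{1/2}}{{N\choose j}^{1/2}}\,p^{j/2}\l(\frac{m}{j}\r)^{j/2}D(p,j).
\]
The prefactor times $(m/j)^{j/2}$ is of order $(C m/N^{1/2})^{j}(1/j)^{j/2}$, up to $j$-th powers of constants. Multiplying by $p^{j/2}$ gives $(C^2pm^2/(jN))^{j/2}$. For the factor $D(p,j)$ I use $D(p,j)\leq \sqrt{pj/N}+\sqrt{(Vj/N)\log(C{m\choose j})}+\Sigma (j/m)^{j/2}$, with $\Sigma\leq 1$.

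At $j=2$ this yields $\frac{pm^2}{N}\cdot\frac1m\Big(\sqrt{p/N}+\sqrt{V\log(m)/N}\Big)+\frac{p}{N}$, which after simplification is at most $\frac{pm}{N}\Big(1+\sqrt{Vm^2\log m/N}\Big)$. For general $j$, the ratio between consecutive terms is controlled by $C\,pm^2/N$ and by $\log{m\choose j}\leq j\log m$. I need the hypothesis precisely to absorb the $j\log m$ coming from $D$ against the geometric factor $(cj^{-1})^{j/2}$. This works because $(Vj^2\log m/N)^{1/2}$ is multiplied by $(C pm^2/N)^{(j-2)/2}$.

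\textbf{The $B_2$ contribution.} Here I split into two cases. If $p<4V$, then $B_2=(CVj\log(Cm/j))^{j/2}D(p,j)$, and the summand becomes $(C V m^2\log m/N)^{j/2}j^{-j/2}j^{j/2}D(p,j)$. This is a geometric series in $\sqrt{Vm^2\log m/N}$. It is the one place where $\log m$ must be carefully absorbed. I expect to need $Vm^2\log m/N\leq c$, or else to keep the $j=2$ term explicit and bound the ratio by $C\sqrt{Vm^2/N}\cdot\sqrt{\log m}\cdot m^{-1/2}$-type gains. The $j=2$ term gives $\frac{m}{N}\cdot V\log(m)\cdot D(p,2)\lesssim \frac{V\log m}{N}\big(m\sqrt{p/N}+\sqrt{V\log m/N}\,m+1\big)$. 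To reach $\frac{V}{N}\log^{3/2}m$, I would use $D(p,2)\lesssim \sqrt{V\log m/N}+\Sigma/m$. I would bound $\Sigma/m$ using $\Sigma\leq1$, and bound the $m\sqrt{\cdot/N}$ pieces by $\sqrt c$. This is the step I consider the main obstacle: tracking how $\log m$ versus $\log^{3/2}m$ arises, and making sure cross terms are absorbed into $\frac{pm}{N}\sqrt{1+Vm^2\log m/N}$.

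If $p\geq 4V$, I use $B_2\leq (pj)^{j/2}e^{-pj/V}$. The summand is then $(Cm^2 p/N)^{j/2}e^{-pj/V}$, which is at most $(C pm^2/N)^{j/2}$. This is geometric, and its $j=2$ term is $\lesssim pm^2/N\cdot(1/m)\cdot$(const), i.e.\ $\lesssim pm/N$ after using $j^{-j/2}$ from the prefactor. Summing the $j=2$ terms of both contributions, together with the geometric tails, gives the claimed bound on $S_{m,p}$.
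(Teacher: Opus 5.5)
The $B_1$ part of your argument, and the overall plan, match the paper: Minkowski, then Corollary~\ref{corollary:1}, then showing each series is dominated by its $j=2$ term. The gap is in the $B_2$ contribution for $p<4V$, at exactly the step you flag, and it comes from a bookkeeping error.

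You write the $j$-th summand as $(CVm^2\log(m)/N)^{j/2}D(p,j)$. That is, you multiply $B_2$ by the prefactor \emph{and} by the factor $(m/j)^{j/2}$, which only occurs in $B_1$. Your own $j=2$ computation, $\frac{m}{N}V\log(m)D(p,2)$, is inconsistent with this formula. With that overestimate the series is geometric only if $Vm^2\log(m)/N\le c$, which is not assumed. Your fallback ratio bound $\sqrt{Vm^2/N}\sqrt{\log(m)/m}$ also fails for $j$ comparable to $m$.

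Using $\binom{m}{j}^{1/2}/\binom{N}{j}^{1/2}\le (m/N)^{j/2}$, the actual summand is
\[
\Big(\frac{CVmj\log(C_1m/j)}{N}\Big)^{j/2}D(p,j).
\]
The missing idea is the elementary bound $j\log(C_1m/j)\le Cm$, equivalently $j^2\log(C_1m/j)\le Cm^2$. The logarithm is $\log(C_1m/j)$, not $\log m$, so it is bounded exactly when $j$ is of order $m$. Hence the base of the geometric series is at most $(CVm^2/N)^{1/2}\le 1/2$, with no logarithmic loss.

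Now split $D(p,j)$ into its three pieces:
\begin{itemize}
\item The $(j/m)^{j/2}$ piece turns the summand into $(CVj^2\log(C_1m/j)/N)^{j/2}$, whose sum is $\lesssim \frac{V}{N}\log m$.
\item The $\sqrt{pj/N}$ and $\sqrt{Vj^2\log(C_1m/j)/N}$ pieces sum to $\lesssim \frac{Vm\log m}{N}\big(\sqrt{p/N}+\sqrt{V\log(m)/N}\big)$.
\end{itemize}
Only at this point do $p<4V$ and $Vm^2/N\le c$ reduce everything to $\frac{V}{N}\log^{3/2}m$, as in your $j=2$ computation.

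For $p\ge 4V$, replacing $(Cmpj/N)^{j/2}$ by $(Cpm^2/N)^{j/2}$ is too crude: the $j=2$ term of the latter is $Cpm^2/N$, not $pm/N$. If you keep the factor $j$, the ratio of consecutive terms is still at most $(Cpm^2/N)^{1/2}$, since $j\le m$, and the sum is $\lesssim pm/N$. With that fix, your treatment of this case is valid. It simply discards $e^{-pj/V}$, which is simpler than the paper's route. The paper instead splits at $p\approx\frac32 V\log(C_2m)$ and uses $pe^{-2p/V}\lesssim V\log(m)/m^3$ to get an $O(V/N)$ bound. Your $O(pm/N)$ is still within the claimed estimate.

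Two minor slips, which do not affect your final $B_1$ bound:
\begin{itemize}
\item The displayed prefactor bound carries a spurious factor $(m/N)^{j/2}$.
\item The $j=2$ term of the $B_1$ series is $\frac{pm^2}{N}\big(\sqrt{p/N}+\sqrt{V\log(m)/N}\big)+\frac{pm}{N}$, without your extra factor $1/m$.
\end{itemize}
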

\begin{proof}
    In view of Minkowski's inequality, it suffices to find an upper bound for
    \[
    \sum_{j=2}^m \frac{{m \choose j}^{1/2}}{{N\choose j}^{1/2}}\l(\mb E \sup_{h \in \mathcal{H}_m} \lvert V_{N,j}(h) \rvert^p \r)^{1/p}.
    \]
Employing the inequality $\frac{{m \choose j}^{1/2}}{{N\choose j}^{1/2}}\leq \l(\frac{m}{N}\r)^{j/2}$ and Corollary \ref{corollary:1}, we see that 
\[
S_{m,p}\leq \sum_{j=2}^m \l(\frac{Cm}{N}\r)^{j/2}\max\l( B_1(p,j),B_2(p,j)\r)
\]
Note that whenever $\frac{pm^2}{N}\leq c$ for $c>0$ small enough,
\begin{multline}
\label{eq:B_1}
\sum_{j=2}^m \l(\frac{Cm}{N}\r)^{j/2} B_1(p,j)
\leq 
\sum_{j=2}^m \l(\frac{Cmp}{N}\r)^{j/2}
\\
+ \sum_{j=2}^m \l(\frac{Cpm^2}{Nj}\r)^{j/2} \l[\frac{pj}{N}\bigvee \frac{Vj^2}{N}\log\l(\frac{Cm}{j}\r)\r]^{1/2}
\\
\leq C_1\l( \frac{mp}{N} + \frac{pm^2}{N} \l[\frac{p}{N}\bigvee \frac{V\log(m)}{N}\r]^{1/2}\r).
\end{multline}
Next, consider the inequality
\begin{multline*}
    \sum_{j=2}^m \l(\frac{Cm}{N}\r)^{j/2} B_2(p,j) \leq 
    \sum_{j=2}^m \l(\frac{CVj^2 \log\l( \frac{C_1m}{j} \r)}{N}\r)^{j/2}
    \\
    +\sum_{j=2}^m \l(\frac{CVjm\log\l( \frac{C_1m}{j} \r)}{N}\r)^{j/2}\l[\frac{pj}{N}\bigvee \frac{Vj^2}{N}\log\l(\frac{C_1m}{j}\r)\r]^{1/2}.
\end{multline*}
Let $\delta=\l(\frac{CV}{N}\r)^{1/2}$. Then, whenever $\frac{Vm^2}{N}\leq c_1$ for a sufficiently small positive constant $c_1$,
\begin{multline*}    
\sum_{j=2}^m \l(\frac{CVj^2 \log\l( \frac{C_1m}{j} \r)}{N}\r)^{j/2} = \sum_{j=2}^m \delta^j \l(j\log^{1/2}\l(C_1m/j\r)\r)^j 
\\
\leq 4\delta^2 \log\l(C_2 m\r) \l(1 + \sum_{j\geq 3} \l[\delta j \log^{1/2}\l(C_1m/j\r)\r]^{j-2}\frac{j^2}{4}\r)
\\ 
\leq 4\delta^2 \log\l(C_2 m\r)\l( 1+ \sum_{j\geq 3} 2^{-(j-2)}\frac{j^2}{4}\r) = \frac{CV}{N} \log\l(C_2 m\r)
\end{multline*}
where in the last transition we used the elementary inequality $j\log\l(\frac{Cm}{j}\r)\leq Cm$ and the fact that $\delta j \log^{1/2}\l(C_1m/j\r) \leq \l(\frac{CVm^2}{N}\r)^{1/2}\leq \frac{1}{2}$ if $\frac{Vm^2}{N}$ is sufficiently small. 
Similarly, letting $\delta_1 = \l(\frac{CVm}{N} \r)^{1/2}$ and using the fact that $\delta_1 j^{1/2} \log^{1/2}\l(C_1m/j\r)\leq \l( \frac{CVm^2}{N}\r)^{1/2}\leq \frac 12$, we deduce that
\begin{multline*}
    \sum_{j=2}^m \l(\frac{CVjm\log\l( \frac{C_1m}{j} \r)}{N}\r)^{j/2}\sqrt{\frac{pj}{N}}
    =\sqrt{\frac{p}{N}}\sum_{j=2}^m \delta_1^j \l(j^{1/2}\log^{1/2}\l(C_1m/j\r)\r)^j \sqrt{j} 
    \\
    \leq 2\sqrt{\frac pN}\delta_1^2 \log(C_2m) \l(1 + \sum_{j\geq 3} \l[\delta_1 j^{1/2} \log^{1/2}\l(C_1m/j\r)\r]^{j-2}\frac{j^{3/2}}{2}\r)
    \\
    \leq 2\sqrt{\frac pN}\delta_1^2 \log(C_2m)\l( 1 + \sum_{j\geq 3} 2^{-(j-2)}\frac{j^{3/2}}{2}\r) = \frac{C'Vm}{N} \sqrt{\frac{p}{N}}\log(C_2m)
\end{multline*}
and, repeating the argument, 
\begin{multline*}
    \sum_{j=2}^m \l(\frac{CVjm\log\l( \frac{C_1m}{j} \r)}{N}\r)^{j/2}\l[\frac{Vj^2}{N}\log\l(\frac{C_1m}{j}\r)\r]^{1/2} 
    \\
    \leq \frac{C'Vm}{N}\sqrt{\frac{V}{N}}\log^{3/2}(C_2m).
\end{multline*}
Therefore, 
\begin{multline*}
    \sum_{j=2}^m \l(\frac{Cm}{N}\r)^{j/2} B_2(p,j) 
    \leq C\frac{V}{N}\log(C_2m)\l( 1+ m\sqrt{\frac{p}{N}} + m\sqrt{\frac{V}{N}\log(C_2m)}\r).
\end{multline*}
Whenever $p<\frac{3}{2}V\log(C_2m)$, we deduce using the inequality $\frac{Vm^2}{N}<c_1$ that 
\begin{equation}
\label{eq:B_2a}
\sum_{j=2}^m \l(\frac{Cm}{N}\r)^{j/2} B_2(p,j)\leq 
C\frac{V}{N}\log^{3/2}(C_2m).
\end{equation}
When $p\geq \frac{3}{2}V\log(C_2m)$, it is easy to see that $pe^{-2p/V}\leq c\frac{V\log(C_2m)}{m^3}$, hence in this case
\[
\sum_{j=2}^m \l(\frac{Cm}{N}\r)^{j/2} B_2(p,j)
\leq 
\sum_{j=2}^m \l(\frac{CV j \log(C_2 m)m}{Nm^3}\r)^{j/2} \leq C_1 \frac{V}{N}.
\]
Therefore, we conclude that whenever $\max\l( \frac{pm^2}{N},\frac{Vm^2}{N}\r)\leq c_1$, 
\begin{multline}
    \label{eq:S(m,p)}
S_{m,p}\leq C\l( \frac{V}{N}\log^{3/2}(m) +  \frac{pm}{N} + \frac{pm^2}{N} \l[\frac{p}{N}\bigvee \frac{V\log(m)}{N}\r]^{1/2}\r)
\\
\leq C\l( \frac{V}{N}\log^{3/2}(m) + \frac{pm}{N}\sqrt{1+\frac{Vm^2\log(m)}{N}}\r).
\end{multline}
\end{proof}
\noindent An immediate consequence of Lemma \ref{lemma:higher-order} is that whenever $\frac{Vm^2}{N}\leq c_1$, 
\begin{multline}
    \label{eq:concentration}
\sup_{h\in \m H_m}\l|\sum_{j=2}^m \frac{{m \choose j}}{{N \choose j}} \sum_{J \in \mathcal{I}^N_j} h^{(j)}\l(X_i,\  i \in J\r)\r|  
\\
\leq C\l( \frac{V}{N}\log^{3/2}(m) + \frac{tm}{N}\sqrt{1+\frac{Vm^2\log(m)}{N}} \r)
\end{multline}
with probability at least $1-e^{-t}$ for $t\leq c'\frac{N}{m^2}$.   
Next, Hoeffding's decomposition yields that 
\begin{multline*}
\sup_{h\in \m H_m} \l| U_{N,m}(h) - \mb E U_{N,m}(h) \r|\leq \sup_{h\in \m H_m}\l|\frac{m}{N} \sum_{i=1}^N h^{(1)}\l(X_i\r)\r|
\\
+\sup_{h\in \m H_m}\l|\sum_{j=2}^m \frac{{m \choose j}}{{N \choose j}} \sum_{J \in \mathcal{I}^N_j} h^{(j)}\l(X_i,\  i \in J\r)\r|.
\end{multline*}
It remains to estimate supremum of the H\'{a}jek's projection 
\[
Z_{m,N}:=\sup_{h\in \m H_m}\l|\frac{m}{N} \sum_{i=1}^N h^{(1)}\l(X_i\r)\r|.
\]
This task is readily solved by the application of Lemma \ref{lemma:talagrand-bousquet} which yields that with probability at least $1-e^{-t}$ and for any $\eps>0$,
\begin{align*}
    Z_{m,N}
    &\leq \mb E Z_{m,N} + \sqrt{2t\l(\frac{m}{N}m\sigma^2_{\m H_m}+2\frac{m}{N}\mb EZ_{m,N}\r)} + \frac{tm}{3N} 
    \\
    &\leq (1+\eps)\mb EZ_{m,N} + \sqrt{2t m\sigma^2_{\m H_m}}\sqrt{\frac{m}{N}} +\l(\frac{1}{3}+\frac{1}{\eps}\r)\frac{tm}{N}. 
\end{align*}
We conclude that for $t\leq c\frac{N}{m^2}$, with probability at least $1-2e^{-t}$ 
\begin{multline*}
    \sup_{h\in \m H_m} \l| U_{N,m}(h) - \mb E U_{N,m}(h) \r| 
    \\
    \leq (1+\eps)\mb E\sup_{h\in \m H_m}\l|\frac{m}{N} \sum_{i=1}^N h^{(1)}\l(X_i\r)\r| + \sqrt{2t m\sigma^2_{\m H_m}}\sqrt{\frac{m}{N}} +\l(\frac{1}{3}+\frac{1}{\eps}\r)\frac{tm}{N}
    \\
    +C\l( \frac{V}{N}\log^{3/2}(m) + \frac{tm}{N}\sqrt{1+\frac{Vm^2\log(m)}{N}} \r).
\end{multline*}
The claim follows. 

\subsection{Proof of Theorem \ref{th:tukey-nonasymp}}
\label{proof:tukey-nonasymp}

Without loss of generality, we will assume that $\mu = 0$. 
Let us outline the roadmap of the proof. Denote via $\Phi_m$ the distribution of $\frac{1}{\sqrt{m}}\sum_{j=1}^m X_j$. Since $\wh\mu_{N,m}$ maximizes the empirical depth corresponding to the dataset $\l\{ \bar X_J, \ J\in \m I_{m}^{(N)}\r\}$, its ``population'' depth corresponding to the measure $\Phi_m$ must also be large. We quantify this using the tools for U-processes developed in the first half of the paper. Then it only remains to show that points with high depth must be close to $\mu$, the point with largest depth $\frac12$. To this end, denote $D_m(z):=D_{\Phi_m}(z)$ and
\[
 D_{N,m}(z):= \inf_{u\in S^{d-1}} \frac{1}{{N \choose m}} \sum_{J \in \mathcal{I}^N_m} I\left\{ \sqrt{m}\dotp{\bar X_J}{u}\geq \sqrt{m}\dotp{z}{u}\right\}.
\]
Note that $D_m(\cdot)$ is the ``population version'' of $D_{N,m}(\cdot)$. The following decomposition is immediate: 
\be{
D_m(\wh\mu_{N,m}) =  D_{N,m}(\wh\mu_{N,m}) + D_m(\wh\mu_{N,m}) -  D_{N,m}(\wh\mu_{N,m}).
}
Our next goal is to understand the magnitude of $D_{N,m}(\wh\mu_{N,m})$. To this end, observe that since $D_m(0)=\frac12$,
\[
 D_{N,m}(\wh\mu_{N,m})\geq  D_{N,m}(0) \geq \frac12 - \l| D_{N,m}(0)-D_m(0)\r|.
\]
Moreover, 
\begin{multline*}
\l| D_{N,m}(0)-D_m(0)\r| \leq R_{N,m}
\\
:= \sup_{\|u\|=1} \l| \frac{1}{{N\choose m}} \sum_{J\in \m I_{m}^{(N)} }I\left\{ \dotp{\bar X_J}{u}\geq 0\right\} - \Phi_m\l( H(0,u) \r)\r|
\end{multline*}
where, as before, $H(z,u)= \l\{ x\in\mb R^d: \ \dotp{x-z}{u}\geq 0\r\}$; in particular, $\Phi_m\l( H(0,u) \r) = \frac12$. In Lemma \ref{lemma:halfspace} below, we prove an upper bound for $R_{N,m}$ that is of order $\sqrt{\frac{\max(d,t)m}{N}}$, implying that 
$D_{N,m}(\wh\mu_{N,m}) \geq \frac12 - C\sqrt{\frac{\max(d,t)m}{N}}$ with exponentially high probability.
\begin{remark}
\label{remark:optimality}
Under stronger assumptions, we conjecture that when $m\to\infty$, 
\[
\l| \frac12 - D_{N,m}(\wh\mu_{N,m})\r|\ll \sqrt{\frac{\max(d,t)m}{N}}
\]
with high probability, ultimately leading to asymptotically sharp constant factors in \eqref{eq:deviation-1}.  The proof of this result requires new tools unrelated to U-processes and will be explored elsewhere.
\end{remark}
\noindent Next, consider the term $D_m(\wh\mu_{N,m}) -  D_{N,m}(\wh\mu_{N,m})$ and note that
\begin{multline*}
D_m(\wh\mu_{N,m}) -  D_{N,m}(\wh\mu_{N,m}) \leq -\tilde R_{N,m}
\\
:=-\sup_{\|u\|=1,\|z\|\leq \wh\delta\in \mb R^d} \l| \frac{1}{{N\choose m}} \sum_{J\in \m I_{m}^{(N)} }I\left\{ \dotp{\bar X_J-z}{u}\geq 0\right\} - \Phi_m\l( H(z,u)\r)\r|
\end{multline*}
where $\wh\delta = \|\wh \mu_{N,m}\|$. Combining two estimates, we deduce that 
\begin{equation}
\label{eq:depth}
D_m(\wh\mu_{N,m}) \geq \frac12 - 2R_{N,m} - \l|\tilde R_{N,m}-R_{N,m}\r|.
\end{equation}
Next, we want to show that $\l|\tilde R_{N,m}-R_{N,m}\r|$ is negligible compared to $R_{N,m}$, so that it suffices to estimate just the latter. 
To achieve this goal, we need a preliminary upper bound on $\wh \delta$.
\begin{lemma}
    \label{lemma:prelim}
There exist absolute constants $c,C>0$ with the following properties. Assume that $\frac{\max(t,d)m}{N}\leq c$. Then 
\[
\hat \delta \leq C\sqrt{\frac{\max(d,t)}{N}}
\]
with probability at least $1-e^{-t}$.
\end{lemma}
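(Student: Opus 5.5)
Since $\wh\mu_{N,m}$ maximizes $D_{N,m}$, the plan is to show that any point far from $0$ has empirical depth strictly smaller than that of $0$. The argument needs only a uniform deviation bound for the half-space U-process over all $z$, not just those with $\|z\|\leq\wh\delta$.

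\textbf{Step 1 (uniform deviation).} Define
\[
\bar R_{N,m}:=\sup_{u\in S^{d-1},\,z\in\mb R^d}\l|\frac{1}{{N\choose m}}\sum_{J}I\{\dotp{\bar X_J-z}{u}\geq 0\}-\Phi_m(H(z,u))\r|.
\]
The kernels $h_{z,u}(x_1,\ldots,x_m)=I\{\dotp{\frac1m\sum x_i - z}{u}\geq 0\}$ are indicators of half-spaces in the sum of the arguments, so the class is VC-subgraph with dimension $O(d)$ and hence Euclidean with exponent $V\asymp d$. I will apply Theorem \ref{thm:Bernstein-bound} with $\eps=1$ and $\Sigma\leq 1$, $m\sigma^2\leq 1$. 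For $\mb E\sup|\frac mN\sum h^{(1)}(X_i)|$ I will use the standard empirical-process bound for the function class $\{h^{(1)}\}$, which is bounded by $1$ and Euclidean with exponent $O(d)$ because it consists of averages of a VC class. This gives $\mb E\sup\lesssim \frac{m}{N}\sqrt{Nd\,\mathrm{var}}+\frac{md}{N}\lesssim \sqrt{\frac{dm}{N}}$. Here I use $\var(h^{(1)})\leq 1/m$, and the $\frac{md}{N}$ term is absorbed once $dm/N\leq c$. The remaining terms in Theorem \ref{thm:Bernstein-bound} are $\frac{tm}{N}$ and $\frac{d}{N}\log^{3/2}m$, both $\lesssim\sqrt{\max(d,t)m/N}$ under the hypothesis. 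Theorem \ref{thm:Bernstein-bound} formally requires $\max(V,t)m^2/N\leq c$, whereas the lemma assumes only $\max(t,d)m/N\leq c$. I therefore expect to need either the condition as stated in Theorem \ref{th:tukey-nonasymp}, or a cruder route that bounds the higher-order Hoeffding terms using only $\var(h)\leq 1$ (Corollary \ref{corollary:1}). With this caveat, the conclusion is $\bar R_{N,m}\leq C\sqrt{\max(d,t)m/N}$ with probability at least $1-e^{-t}$.

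\textbf{Step 2 (population depth drops linearly).} Let $Y=\frac{1}{\sqrt m}\sum_{j\le m}X_j$; by elliptical symmetry, $\dotp{Y}{u}\overset{d}{=}\dotp{Y}{e_1}$ for all $u$. For $z\neq 0$ take $u=z/\|z\|$. Then
\[
D_m(z)\leq \Phi_m(H(z,u))=\pr{\dotp{Y}{e_1}\geq \sqrt m\|z\|}.
\]
Since $\dotp{Y}{e_1}$ is a normalized sum of $m$ i.i.d. symmetric variables with unit variance, I will show that $\frac12-\pr{\dotp{Y}{e_1}\geq s}\geq c_0\min(s,1)$ for all $s\geq 0$, uniformly in $m$. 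This is the main obstacle. My first attempt will use a symmetrization and anti-concentration argument: by Paley--Zygmund with finite variance, $\pr{|\dotp{Y}{e_1}|\leq s}\gtrsim s$ for small $s$. If that fails, I will use Berry--Esseen-type closeness to $N(0,1)$ together with the density bound on small scales. In the latter case I may need $\sqrt m\|z\|$ larger than some constant multiple of the Kolmogorov error, which is harmless.

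\textbf{Step 3 (conclusion).} On the event of Step 1,
\[
D_{N,m}(\wh\mu_{N,m})\geq D_{N,m}(0)\geq \tfrac12-\bar R_{N,m},
\]
and $D_{N,m}(\wh\mu_{N,m})\leq D_m(\wh\mu_{N,m})+\bar R_{N,m}$. Combining these with Step 2 gives
\[
c_0\min\l(\sqrt m\,\wh\delta,1\r)\leq 2\bar R_{N,m}\leq C\sqrt{\max(d,t)m/N}.
\]
When $c$ is small, the right-hand side is less than $c_0$, so the minimum is attained by $\sqrt m\,\wh\delta$. Dividing by $\sqrt m$ yields $\wh\delta\leq C\sqrt{\max(d,t)/N}$.
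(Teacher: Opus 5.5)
There is a genuine gap in Step 1, and it is the one you flag without resolving. Theorem \ref{thm:Bernstein-bound} needs $\max(V,t)m^2/N\le c$, whereas the lemma assumes only $\max(d,t)m/N\le c$, and your fallback does not close the difference. If you bound the Hoeffding terms one at a time through Corollary \ref{corollary:1} and sum them as in Lemma \ref{lemma:higher-order}, you get contributions of order $(Cpm^2/(Nj))^{j/2}\sqrt{pj/N}$. For $x=Cpm^2/N$ large these reach $e^{x/(2e)}$ at $j\approx x/e$, which is an admissible index since $x\lesssim m$ under your hypothesis. So this route is intrinsically tied to the $m^2/N$ scale. Step 1 also loses a logarithm. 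For a Euclidean class with envelope $1$ and $L_2(P)$-diameter of order $m^{-1/2}$, the generic bound is $\mb E\sup_{h}\left|\frac{m}{N}\sum_{i} h^{(1)}(X_i)\right|\lesssim\sqrt{dm\log(m)/N}$, not $\sqrt{dm/N}$. Even under the stronger hypothesis you would therefore only get $\wh\delta\lesssim\sqrt{\max(d,t)\log(m)/N}$, unless you exploit the smoothness of $h^{(1)}$ as in the proof of Lemma \ref{lemma:halfspace}.

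The missing idea, which is the paper's route, is to avoid Hoeffding's decomposition altogether. Use instead Hoeffding's representation $U_{N,m}=\frac{1}{N!}\sum_\pi w_\pi$, where each $w_\pi$ averages half-space indicators evaluated at $k=\lfloor N/m\rfloor$ independent block means. Since $\sup_{u,z}|U_{N,m}-\mb E U_{N,m}|\le\frac{1}{N!}\sum_\pi\sup_{u,z}|w_\pi-\mb E w_\pi|$, Jensen's inequality for $x\mapsto e^{\lambda x}$ dominates its exponential moments by those of an ordinary empirical process with $k$ i.i.d. points indexed by half-spaces (VC dimension $d+1$). The standard expectation bound and the exponential-moment bound behind Lemma \ref{lemma:talagrand-bousquet} then give $\bar R_{N,m}\le C\sqrt{\max(d,t)/k}\asymp C\sqrt{\max(d,t)m/N}$ with probability at least $1-e^{-t}$. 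This has no logarithm and no condition on $m^2/N$. The hypothesis $\max(d,t)m/N\le c$ is then needed only to absorb $t/k$ into $\sqrt{t/k}$ and to make $2\bar R_{N,m}<c_0$ in your Step 3. Step 3 is otherwise correct, including your insistence on uniformity over all $z$.

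In Step 2, neither of your proposed tools works as stated. Paley--Zygmund bounds from below the probability that $|\dotp{Y}{e_1}|$ is large, not small-ball probabilities. Moreover, $\frac12-\pr{\dotp{Y}{e_1}\ge s}\gtrsim s$ cannot hold uniformly in $m$ under a variance assumption alone; Rademacher summands with $m$ odd give $0$ for $s\le m^{-1/2}$. The Berry--Esseen fallback is not harmless either. A Kolmogorov error $\Delta_m$ adds $\Delta_m/\sqrt m$ to the bound on $\wh\delta$. Even at the rate $\Delta_m\lesssim m^{-1/2}$, which already needs more than second moments, this is $1/m$, and $1/m$ exceeds $\sqrt{\max(d,t)/N}$ whenever $\max(d,t)m^2\le N$. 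What you need is a lower bound on the density $\phi_m$ near $0$. This follows from Lemmas \ref{lemma:uniform-lipschitz} and \ref{lemma:implication-of-uniform-lipschitz} under the standing assumption on the characteristic generator, and it is precisely the content of Lemma \ref{lemma:depth}.
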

\noindent The proof of this lemma is standard and is given in section \ref{proof:lemma:prelim}.
Next, we state the bound for $\l|\tilde R_{N,m}-R_{N,m}\r|$. 
\begin{lemma}
\label{lemma:cont-modulus}
Assume that $\frac{\max(d,t)m^2}{N}\leq c$. Then 
\[
\l|\tilde R_{N,m}-R_{N,m}\r| 
        \leq C\sqrt{\frac{\max(d,t)m}{N}}\l( \frac{1}{\sqrt m} + \frac{\log(m)}{\sqrt m} \r).
\]
with probability at least $1-2e^{-t}$.
\end{lemma}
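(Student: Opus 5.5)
We need to bound $|\tilde R_{N,m}-R_{N,m}|$. Since $\tilde R_{N,m}\ge R_{N,m}$ (the supremum defining $\tilde R_{N,m}$ includes $z=0$), it suffices to bound
\[
\sup_{\|u\|=1,\ \|z\|\le \wh\delta}\l| \l(U_{N,m}(h_{z,u}) - \Phi_m(H(z,u))\r) - \l(U_{N,m}(h_{0,u})-\tfrac12\r)\r|,
\]
where $h_{z,u}(x_1,\ldots,x_m)=I\{\dotp{\bar x-z}{u}\ge 0\}$. This is the supremum of a \emph{localized} centered U-process indexed by differences $h_{z,u}-h_{0,u}$.

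First, I invoke Lemma~\ref{lemma:prelim} to restrict to $\|z\|\le \delta_0:=C\sqrt{\max(d,t)/N}$ on an event of probability at least $1-e^{-t}$. The class $\m H_m(\delta_0)=\{h_{z,u}-h_{0,u}:\ \|u\|=1,\|z\|\le\delta_0\}$ is built from half-space indicators in $\mb R^d$ composed with the linear map $(x_1,\ldots,x_m)\mapsto \bar x$. It is therefore a difference of VC-subgraph classes with VC dimension $O(d)$, hence Euclidean with exponent $V\asymp d$ by Lemma~\ref{lemma:covering_bound}. I then apply Theorem~\ref{thm:Bernstein-bound} (with $\sup\|h\|_\infty\le 1$; rescaling is harmless). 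Its hypothesis $\max(V,t)m^2/N\le c$ matches the lemma's assumption. The higher-order part contributes at most $C\frac{d}{N}\log^{3/2}m + C\frac{tm}{N}$. Since $\max(d,t)m^2/N\le c$, this is bounded by $C\sqrt{\max(d,t)m/N}\cdot\frac{\log m}{\sqrt m}$; indeed $\frac{dm}{N}\le \sqrt{\frac{dm}{N}}\sqrt{\frac{dm}{N}}\le \sqrt{\frac{dm}{N}}\,\frac{c}{\sqrt m}$, and a similar bound holds for $\frac{d\log^{3/2} m}{N}$. This produces the $\log(m)/\sqrt m$ term.

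The main term is the H\'ajek projection. Its variance proxy is $m\sigma^2_{\m H_m(\delta_0)}$, which controls both the Bousquet deviation term $\sqrt{2tm\sigma^2}\sqrt{m/N}$ and, via a standard Dudley/VC bound for localized empirical processes, the expectation $\mb E Z_{m,N}\lesssim \sqrt{\frac{m}{N}}\sqrt{d\, m\sigma^2\log(e/(m\sigma^2))}+\frac{dm}{N}\log(\cdot)$. The key estimate is therefore
\[
m\sigma^2_{\m H_m(\delta_0)}\le m\var\l(h_{z,u}-h_{0,u}\r)\le \mb P\l(0\le \sqrt m\dotp{\bar X_{[m]}}{u}\le \sqrt m\,\delta_0\r)\cdot m\cdot\tfrac1m,
\]
combined with $\var(h^{(1)})\le \var(h)/m$. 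I would instead bound $h^{(1)}$ directly. We have $h^{(1)}_{z,u}(x)-h^{(1)}_{0,u}(x)=G_{m-1}(\cdot)$ differences, where $G_{m-1}$ is the distribution function of $\dotp{\sum_{i\le m-1}X_i}{u}$ (independent of $u$ by elliptical symmetry) evaluated at points differing by $m\delta_0$ and shifted by $\dotp{x}{u}$. Using the density of the normalized sum, bounded via the smoothness assumption on $\psi$ or a local CLT for elliptical projections, this difference is at most $C\sqrt m\,\delta_0$ uniformly in $x$. This gives $m\sigma^2\lesssim m\cdot m\delta_0^2 = m^2\max(d,t)/N\le c$. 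More precisely, $\sqrt{m\sigma^2}\lesssim \sqrt{m}\,\sqrt m\delta_0$, so the deviation term is $\sqrt{t m/N}\cdot\sqrt{m}\,\delta_0\cdot$, which is $\le C\sqrt{\max(d,t)m/N}\cdot m\delta_0$. I expect the refined accounting to yield the factor $1/\sqrt m$. This is the step I expect to be the main obstacle: getting a uniform density bound for $\dotp{\bar X}{u}$ that scales as $\sqrt m$ without a finite-third-moment Berry--Esseen argument, and tracking exactly which power of $m$ survives so as to land on the claimed $m^{-1/2}$ factor. I would try the elliptical representation $\dotp{X}{u}=R\dotp{U}{u}$ together with Fourier inversion using $\psi(t^2)=o(|t|^{-\alpha})$ to bound the density of the sum of $m-1$ terms. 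Finally, I union-bound the events from Lemma~\ref{lemma:prelim} and Theorem~\ref{thm:Bernstein-bound}, adjusting constants to obtain probability $1-2e^{-t}$.
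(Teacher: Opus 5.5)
\textbf{There is a genuine gap at the step that carries the whole lemma.} Your architecture is sound: you restrict to $\|z\|\le\delta_0$ via Lemma~\ref{lemma:prelim}, then apply Theorem~\ref{thm:Bernstein-bound} to the localized class $\{h_{z,u}-h_{0,u}\}$. Your handling of the higher-order terms is also fine, up to a harmless $\sqrt{\log m}$ you dropped from $\sqrt{Vm^2\log(m)/N}$. What is missing is a proof that the localized H\'ajek term is smaller than $\sqrt{\max(d,t)m/N}$ by a factor $m^{-1/2}$.

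The estimates you write down do not give this.
\begin{itemize}
\item Your sup-norm bound $\sup_x|h^{(1)}_{z,u}(x)-h^{(1)}_{0,u}(x)|\le C\sqrt m\,\delta_0$ only yields $m\sigma^2\lesssim m^2\delta_0^2\asymp \max(d,t)m^2/N\le c$, i.e.\ $\sqrt{m\sigma^2}=O(1)$. Then the Bousquet term is of order $\sqrt{tm/N}$, and your localized Dudley bound gives $\mb E Z_{m,N}\lesssim \sqrt{dm/N}$. That is the same order as $R_{N,m}$ itself. Indeed, the factor $m\delta_0=C\sqrt{\max(d,t)m^2/N}$ at the end of your computation does not decay.
\item Your other route uses $\var(h^{(1)})\le \var(h)/m$, with $\var(h_{z,u}-h_{0,u})$ bounded by the probability that $\dotp{\bar X_m}{u}$ falls in a slab of width $\delta_0$, i.e.\ $\lesssim\sqrt m\,\delta_0$. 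This gives $m\sigma^2\lesssim \sqrt{\max(d,t)m/N}$, hence a gain of only $(\max(d,t)m/N)^{1/4}\lesssim m^{-1/4}$.
\end{itemize}
The obstacle you name is also not the real one. A density bound at scale $\sqrt m$ is already available: Lemmas~\ref{lemma:uniform-lipschitz} and~\ref{lemma:implication-of-uniform-lipschitz} make $\phi_{m-1}$ uniformly bounded and uniformly Lipschitz.

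\textbf{The missing idea is to bound the \emph{variance} of the H\'ajek kernel, not its sup norm, using the Lipschitz continuity of $\phi_{m-1}$.} The difference of conditional expectations is nearly constant in $x$. Indeed, $\mb E[(h_{z,u}-h_{0,u})(x,X_2,\ldots,X_m)]=G(\dotp{x}{u})$ with $G(y)=F_{m-1}\bigl(\tfrac{-y}{\sqrt{m-1}}\bigr)-F_{m-1}\bigl(\tfrac{m\dotp{z}{u}-y}{\sqrt{m-1}}\bigr)$. So $|G'(y)|=\frac{1}{\sqrt{m-1}}\bigl|\phi_{m-1}\bigl(\tfrac{m\dotp{z}{u}-y}{\sqrt{m-1}}\bigr)-\phi_{m-1}\bigl(\tfrac{-y}{\sqrt{m-1}}\bigr)\bigr|\le \frac{Lm\delta_0}{m-1}$, and therefore $\var(G(\dotp{X}{u}))\le C\delta_0^2\,\mb E\dotp{X}{u}^2=C\delta_0^2$. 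Consequently:
\begin{itemize}
\item $m\sigma^2\lesssim \max(d,t)m/N\le c/m$, so the Bousquet term is $\lesssim\sqrt{tm/N}\,m^{-1/2}$;
\item the localized VC bound (envelope $1$, $\sigma\lesssim\delta_0$) gives $\mb E Z_{m,N}\lesssim m\delta_0\sqrt{d\log(C/\delta_0)/N}+\frac{dm}{N}\log(C/\delta_0)\lesssim\sqrt{dm/N}\,\frac{\log m}{\sqrt m}$, after the same $\sqrt{z}\log(1/z)$ manipulation the paper uses.
\end{itemize}
With this step your argument closes.

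\textbf{Comparison with the paper.} The paper does not use Hoeffding's decomposition here. It passes, via the permutation representation and Jensen's inequality, to the empirical process of the $\lfloor N/m\rfloor$ block means, and quotes a modulus-of-continuity bound for half-spaces. Be aware that those block means have covariance $I_d/m$, so an increment of the block process has variance of order $\sqrt m\,\bar\delta$, not $\bar\delta$. Tracked with this scaling, the block route yields only a gain of order $(\max(d,t)m/N)^{1/4}$, about $m^{-1/4}$ when $\max(d,t)m^2/N\asymp c$. That is enough for Theorem~\ref{th:tukey-nonasymp} but not for the rate as stated. Your H\'ajek-based route, completed as above, exploits the smoothing of the U-statistic and does give $m^{-1/2}$ up to logarithms.
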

\noindent The proof of this result is given in section \ref{section:proof:cont-modulus}. Since $R_{N,m}$ is of order $\sqrt{\frac{\max(d,t)m}{N}}$, as shown below, the term $\l|\tilde R_{N,m}-R_{N,m}\r|$ is negligible compared to it. 

We will need to introduce additional definitions before proceeding. 
In what follows, $F_k(\cdot)$ will denote the distribution function of a random variable $\frac{1}{\sqrt{k}}\sum_{j=1}^k \dotp{X_j}{u}$ (recall that $F_k$ does not depend on $u$ due to elliptical symmetry). In particular, $\mb E F_{k}\l( \dotp{X}{u}\r)=\frac12$ due to symmetry around $0$. 
Finally, let $\phi_k(x)$ be the probability density function corresponding to $F_k$; its existence is guaranteed by Lemma \ref{lemma:uniform-lipschitz}, moreover, $\phi_m(0)=\frac{1}{\sqrt{2\pi}}+o(1)$ as $m\to\infty$ in view of Lemma \ref{lemma:implication-of-uniform-lipschitz}. The following result constitutes the core of the proof. 
\begin{lemma}
\label{lemma:halfspace}
Assume that $\frac{\max(t,d)m^2}{N}\leq c$ for sufficiently small $c>0$ and that $\mb E\l|\dotp{X}{u}\r|^p<\infty$ for some $p>4$. Then for any $\eps>0$, 
\begin{multline*}  
R_{N,m}\leq \frac{ 1 + \alpha_m}{\sqrt{2\pi}}\l[(1+\eps)\sqrt{\frac{dm}{N}} +
\sqrt{\frac{2tm}{N}} \r]
\\
+C\l(\frac{1}{\eps}+\sqrt{\frac{dm^2\log(m)}{N}}\r)\frac{tm}{N}
    +\frac{Cd}{N}\log^{3/2}(m).
\end{multline*}
with probability at least $1-2e^{-t}$.
Here, 
\[
\alpha_m = \l| \phi_m(0) - \frac{1}{\sqrt{2\pi}}\r| + C(p)\max\l(\frac{1}{m^{1/4}},\frac{\sqrt{d}}{N^{1/4}}\r)\log^{1/2}(dm)
\]
where $C(p)$ also depends on the distribution $F_1$ of the random variable $\dotp{X}{u}$. 
\end{lemma}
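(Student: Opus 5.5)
The plan is to apply Theorem~\ref{thm:Bernstein-bound} to the class of kernels
\[
\m H_m=\l\{h_u(x_1,\ldots,x_m)=I\l\{\dotp{\textstyle\sum_{i=1}^m x_i}{u}\geq 0\r\}-\tfrac12,\ u\in S^{d-1}\r\},
\]
and then to evaluate the two leading terms of that bound sharply. Each $h_u$ is permutation-symmetric and bounded by $1$. The class of indicators of half-spaces through the origin, composed with the linear map $(x_1,\ldots,x_m)\mapsto\sum_i x_i$, is VC-subgraph with dimension of order $d$. Lemma~\ref{lemma:covering_bound} therefore shows that $\m H_m$ is Euclidean with exponent $V\leq Cd$, and the assumption $\max(t,d)m^2/N\leq c$ is exactly the hypothesis of Theorem~\ref{thm:Bernstein-bound}. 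That theorem immediately supplies the remainder terms $C(1/\eps+\sqrt{dm^2\log m/N})\,tm/N+\frac{Cd}{N}\log^{3/2}m$. It remains to show the following bounds for the leading terms:
\[
\sqrt{2tm\sigma^2_{\m H_m}}\sqrt{\tfrac mN}\leq\frac{1+\alpha_m}{\sqrt{2\pi}}\sqrt{\tfrac{2tm}{N}},
\qquad
\mb E\sup_u\Big|\tfrac mN\sum_i h_u^{(1)}(X_i)\Big|\leq\frac{1+\alpha_m}{\sqrt{2\pi}}\sqrt{\tfrac{dm}{N}}.
\]

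\emph{Step 1: the H\'ajek projection.} Conditionally on $X_1=x$, the kernel equals $I\{\sqrt{m-1}\,S_{m-1}(u)\geq-\dotp{x}{u}\}-\frac12$, where $S_{m-1}(u)$ has distribution $F_{m-1}$. Hence
\[
h_u^{(1)}(x)=\tfrac12-F_{m-1}\l(-\dotp{x}{u}/\sqrt{m-1}\r).
\]
Writing $F_{m-1}(-y)=\tfrac12-\phi_{m-1}(0)y+r(y)$ gives
\[
h_u^{(1)}(x)=\frac{\phi_{m-1}(0)}{\sqrt{m-1}}\dotp{x}{u}-r\l(\dotp{x}{u}/\sqrt{m-1}\r).
\]
I will use the uniform Lipschitz property of densities (Lemmas~\ref{lemma:uniform-lipschitz}, \ref{lemma:implication-of-uniform-lipschitz}) to control $r$. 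Those lemmas rest on the decay $\psi(t^2)=o(|t|^{-\alpha})$ of the characteristic generator, and they give $|\phi_k(y)-\phi_k(0)|\lesssim|y|^\beta$ or a similar modulus uniformly in $k$.

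\emph{Step 2: the variance and the expectation.} With this linearization, the variance satisfies
\[
m\sigma^2_{\m H_m}\leq\frac{m}{m-1}\,\phi_{m-1}(0)^2\,\mb E\dotp{X}{u}^2+\text{error}.
\]
Since $\mb E\dotp{X}{u}^2=1$ by isotropy, the main part is $\frac{1}{2\pi}(1+O(\alpha_m))$. For the expectation, the linear part contributes
\[
\frac{m\phi_{m-1}(0)}{N\sqrt{m-1}}\,\mb E\Big\|\sum_i X_i\Big\|\leq\frac{\phi_{m-1}(0)}{\sqrt{m-1}}\,\frac{m\sqrt{d}}{\sqrt N},
\]
by Jensen's inequality, which is the $\sqrt{dm/N}/\sqrt{2\pi}$ term. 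I will also need to relate $\phi_{m-1}(0)$ to $\phi_m(0)$; both converge to $1/\sqrt{2\pi}$ by the local CLT of Lemma~\ref{lemma:implication-of-uniform-lipschitz}, and the discrepancy is absorbed into $\alpha_m$.

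\emph{Step 3 (main obstacle): the nonlinear remainder.} I must bound the supremum of the empirical process
\[
u\mapsto\frac mN\sum_i\Big[r\l(\dotp{X_i}{u}/\sqrt{m-1}\r)-\mb E\, r\l(\dotp{X}{u}/\sqrt{m-1}\r)\Big]
\]
by $\alpha_m\sqrt{dm/N}$. The plan is to truncate at level $|\dotp{X_i}{u}|\leq\tau\sqrt m$:
\begin{itemize}
\item On the truncated part, $|r(y)|\lesssim y^2$ or $|y|^{1+\beta}$ via the Lipschitz density. A Lipschitz envelope of order $m^{-1/2}\cdot(\tau\text{-small})$ then gives a Rademacher/chaining bound of $\sqrt{d/N}$ times a small factor, multiplied by $\log^{1/2}(dm)$ from the union/chaining over the sphere.
\item On the tail part, I will use the moment $\mb E|\dotp{X}{u}|^p$ with $p>4$ and Markov's inequality, together with $\max_i\|X_i\|$ controlled through $\sqrt d/N^{1/4}$. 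This should produce the terms $m^{-1/4}$ and $\sqrt d/N^{1/4}$ in $\alpha_m$.
\end{itemize}
Balancing $\tau$ is expected to give the $\max(m^{-1/4},\sqrt d N^{-1/4})\log^{1/2}(dm)$ rate. This is where the work lies.

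Combining Steps 1–3 with Theorem~\ref{thm:Bernstein-bound}, with $V\asymp d$, yields the claimed bound with probability at least $1-2e^{-t}$.
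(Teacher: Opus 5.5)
Your architecture matches the paper's. You apply Theorem~\ref{thm:Bernstein-bound} with $V\asymp d$, linearize $h_u^{(1)}$ at the density value at $0$, and use Jensen for the linear part. For the variance you use $|r(y)|\leq \frac L2 y^2$ together with third and fourth moments. The remainder is split by a smooth cutoff at $|\dotp{X_j}{u}|\approx m^{1/4}$ (your $\tau=m^{-1/4}$), and the truncated piece is handled by contraction; the paper needs no logarithm there, since two contractions reduce it to $\mb E\|\sum_j X_j\|$.

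\textbf{The gap is the tail piece.} This is the only genuinely hard step of the lemma, and ``Markov's inequality together with $\max_i\|X_i\|$'' does not handle it. After contraction (the map $z\mapsto\int_0^z(\phi_m(s)-\phi_m(0))\,ds$ is $O(1)$-Lipschitz and vanishes at $0$), you need $\mb E\sup_{u}\frac mN\bigl|\sum_j\varepsilon_j Z_j^U(u)\bigr|=o\bigl(\sqrt{dm/N}\bigr)$. Here $Z^U_j(u)$ is the tail part of $\dotp{X_j}{u}/\sqrt{m}$. Contracting once more down to $\dotp{X_j}{u}$ only gives $O\bigl(\sqrt{dm/N}\bigr)$, which is exactly how the factor $4$ in Remark~\ref{remark:expectation-simple} arises. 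The gain must come from the small $L_2$-diameter of the tail class, and you need it for the \emph{empirical} measure and \emph{uniformly over the sphere}. By Dudley's bound, the map $u\mapsto Z_j^U(u)$ is $C\|X_j\|/\sqrt m$-Lipschitz, so the entropy at scale $\varepsilon$ is at most $d\log(C\|X\|_N/(\sqrt m\,\varepsilon))$. Everything then reduces to showing that the empirical diameter $\Omega_{N,m}$ satisfies
\[
m\,\mb E\,\Omega^2_{N,m}\lesssim \mb E\sup_{\|u\|=1}\frac1N\sum_{j=1}^N\dotp{X_j}{u}^2 I\bigl\{|\dotp{X_j}{u}|\geq m^{1/4}\bigr\}\leq C(p)\max\Bigl(\frac{1}{\sqrt m},\frac{d}{\sqrt N}\Bigr).
\]
Markov's inequality gives the rate $m^{-1/2}$ only for each fixed $u$. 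It says nothing about the supremum over $S^{d-1}$ for heavy-tailed data in growing dimension.

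The paper supplies this with two uniform tools that your plan does not name.
\begin{itemize}
\item Lemma~\ref{lemma:hat-J} applies Talagrand's inequality to a Lipschitz surrogate of the indicator and shows $\sup_u|J(u)|\leq c_1N$ with overwhelming probability.
\item The Abdalla--Zhivotovskiy bound \eqref{eq:f_k} controls $\sup_u\frac1N\sum_{j\in J}\dotp{X_j}{u}^2$ uniformly over all index sets with $|J|\leq l$, by $\max_j\|X_j\|^2/N+\sqrt{l/N}$. This is where $p>4$ is really used.
\end{itemize}
Applied to $\hat J=J(\hat u)$, these give the self-bounding inequality $\sqrt m\,|\hat J|/N\lesssim \max_j\|X_j\|^2/N+\sqrt{|\hat J|/N}$. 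That yields the display above on an event of probability $1-C(p)/N$. The complement is handled by Cauchy--Schwarz and $\mb E\|X\|^4\lesssim d^2$, and $\mb E\max_j\|X_j\|^2\lesssim d\sqrt N$. Taking square roots then produces exactly the terms $m^{-1/4}$ and $\sqrt d/N^{1/4}$ in $\alpha_m$.

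A standard maximal inequality for VC-type classes with unbounded envelope could be substituted. However, its envelope term, of order $\frac{d}{\sqrt N}\,\mb E^{1/2}\max_j\|X_j\|^2/\sqrt m$, gives $d/N^{1/4}$ instead of $\sqrt d/N^{1/4}$, so it proves a weaker $\alpha_m$ than the one stated.

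A smaller point concerns $\phi_{m-1}(0)$ versus $\phi_m(0)$. The discrepancy is absorbed into $\alpha_m$ not merely because both converge to $1/\sqrt{2\pi}$. The reason is that $\phi_m(0)=\sqrt{\tfrac{m}{m-1}}\,\mb E\,\phi_{m-1}\bigl(-\dotp{X}{u}/\sqrt{m-1}\bigr)$, and combined with the uniform Lipschitz bound this gives $|\phi_m(0)-\phi_{m-1}(0)|\lesssim m^{-1/2}$.
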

\begin{remark}
    \textbf{(a)} It is easy to see that $\alpha_m = o(1)$ as $N\to\infty$ whenever $\frac{\sqrt N}{d}\gg \log(m)+\log(d)$ and $m\gg \log^2(d)$. It is also clear that whenever $\frac{\max(t,d) m^2}{N}\leq c$ for $c$ small enough, the last two terms in the display above are negligible compared to the first two. Therefore, in this case we see that for any $\eps>0$, 
    \[
    R_{N,m}\leq \frac{ 1 + o_\eps(1)}{\sqrt{2\pi}}\l[(1+\eps)\sqrt{\frac{dm}{N}} +
    \sqrt{\frac{2tm}{N}} \r]
    \]
    with probability at least $1-2e^{-t}$, where $o_\eps(1)\to 0$ as $N,m(N)\to\infty$, for every $\eps>0$. \\
    \noindent \textbf{(b)} It follows from Remarks \ref{remark:expectation-simple} and \ref{remark:variance-simple} below that without assuming existence of moments of order greater than $4$ and requiring only that $\frac{\max(t,d)m^2}{N}$ is sufficiently small, the previous display is replaced by the inequality
    \[ 
    R_{N,m}\leq \frac{1 + o_\eps(1)}{\sqrt{2\pi}}\l[4(1+\eps)\sqrt{\frac{dm}{N}} +
    \sqrt{\frac{2tm}{N}} \r]
    \]
that holds with probability at least $1-2e^{-t}$ (here, $o_\eps(1)$ is a different, possibly slower converging sequence, compared to the previous statement). In this case, we pay the price of an additional factor $4$ in the first term proportional to $\sqrt{\frac{dm}{N}}$, while the term controlling the deviations remains optimal. 
\end{remark}
\begin{proof}
    Will apply Theorem \ref{thm:Bernstein-bound} and estimate the quantities involved in the bound. The first object of interest is the expected supremum
    \[
    \mb E\sup_{h\in \m H_m}\l|\frac{m}{N} \sum_{i=1}^N h^{(1)}\l(X_i\r)\r|. 
    \]
    Here, $\m H_m = \l\{ h_u(\cdot), \ \|u\|=1\r\}$ and $h_u(x_1,\ldots,x_m) = I\l\{\frac{1}{m}\sum_{j=1}^m x_j\in H(0,u) \r\}$. Then 
    \begin{multline*}
    h^{(1)}_u(X) = \mb E\l[ h_u(X,X_2,\ldots,X_m) | X\r]
    \\
    = F_{m-1}\l( \dotp{\frac{X}{\sqrt{m-1}}}{u}\r) - \mb E F_{m-1}\l( \dotp{\frac{X}{\sqrt{m-1}}}{u}\r).
    \end{multline*}
    Next, we want to show that for large $m$, $h^{(1)}_u(X) \approx \phi_{m-1}(0)\dotp{\frac{X}{\sqrt m}}{u}$. Modulo the necessary technicalities, it would immediately imply that 
    \[
    \mb E\sup_{\|u\|=1} \l|\frac{m}{N} \sum_{j=1}^N h_u^{(1)}\l(X_j\r)\r| \approx \frac{1}{\sqrt{2\pi}}\sqrt{\frac{dm}{N}}.
    \]
    Let $Z_j(u) = \dotp{\frac{X_j}{\sqrt{m-1}}}{u}$ and observe that 
    \[
    h_u^{(1)}(X_j) = \phi_m(0) Z_j(u) + \int_0^{Z_j(u)} (\phi_m(t) - \phi_m(0))dt,
    \]
    whence 
    \begin{multline*}
    \mb E\sup_{\|u\|=1}\l|\frac{m}{N} \sum_{i=1}^N h_u^{(1)}\l(X_i\r)\r| \leq 
    \phi_m(0)\mb E\sup_{\|u\|=1} \frac{m}{N}\l| \sum_{j=1}^N Z_j(u)\r|
    \\
    + \underbrace{\mb E\sup_{\|u\|=1} \frac{m}{N}\l| \sum_{j=1}^N \int_0^{Z_j(u)} (\phi_m(t) - \phi_m(0))dt\r|}_{=:G(m,N)}.
    \end{multline*}
    We immediately get that 
    \begin{multline*}
    \mb E\sup_{\|u\|=1} \frac{m}{N}\l| \sum_{j=1}^N Z_j(u)\r| = 
    \frac{m}{N}\mb E \l\| \sum_{j=1}^N Z_j(u)\r\| 
    \\
    \leq \frac{m}{N}\mb E^{1/2} \l\| \sum_{j=1}^N Z_j(u)\r\|^2 
    = \sqrt{\frac{md}{N}}.
    \end{multline*}
On the other hand, 
\begin{equation}
    \label{eq:z-decomposition}
Z_j(u) = \underbrace{\dotp{\frac{X_j}{\sqrt{m}}}{u}\psi\l(\l|\frac{\dotp{X_j}{u}}{m^{1/4}}\r| \r)}_{Z_j^L(u)} + \underbrace{\dotp{\frac{X_j}{\sqrt{m}}}{u}\l( 1-\psi\l(\l|\frac{\dotp{X_j}{u}}{m^{1/4}}\r| \r)\r)}_{Z_j^U(u)},
\end{equation}
where 
\[
\psi(z) = \begin{cases}
    1, & z\leq 1, \\
    0, & z\geq 3/2, \\
    1-2(z-1), & z\in \l(1,\frac32\r).
\end{cases}
\] 
is the continuous approximation of the indicator function $I\{z\leq 1\}$; note that the Lipschitz constant of $\psi$ equals $2$ and that $I\{z\leq 1\}\leq \psi(z) \leq I\l\{z\leq \frac32\r\}$. Therefore, 
\begin{multline*}
G(m,N)\leq \mb E\sup_{\|u\|=1} \frac{m}{N}\l| \sum_{j=1}^N \int_0^{Z^L_j(u)} (\phi_m(t) - \phi_m(0))dt\r| 
\\
+ \mb E\sup_{\|u\|=1} \frac{m}{N}\l| \sum_{j=1}^N \int_0^{Z^U_j(u)} (\phi_m(t) - \phi_m(0))dt\r|.
\end{multline*}
It remains to estimate both terms on the right-hand side above. Let $L_m$ be the Lipschitz constant of $\phi_m$; recall that there exists $L>0$ such that $L_m\leq L$ for all $m$ large enough in view of Lemma \ref{lemma:implication-of-uniform-lipschitz}. Note that 
\begin{equation}
\label{eq:lipschitz}
    \l|\int_{z_1}^{z_2} (\phi_m(t)-\phi(0))dt\r|\leq L\l|\int_{z_1}^{z_2} t dt \r|\leq \frac{L}{2}(|z_1|+|z_2|)\l| z_1-z_2\r|,
\end{equation}
implying that the function
$z\mapsto \int_0^z (\phi_m(t)-\phi(0))dt$ is Lipschitz continuous with Lipschitz constant $L m^{-1/4}$ whenever $|z_1|,|z_2|$ do not exceed $m^{-1/4}$. Next, we apply the symmetrization and Talagrand's contraction inequalities to deduce that 
\begin{equation*}
    \mb E\sup_{\|u\|=1} \frac{m}{N}\l| \sum_{j=1}^N \int_0^{Z^L_j(u)} (\phi_m(t) - \phi_m(0))dt\r|
    \leq \frac{4L}{m^{1/4}}\mb E\sup_{\|u\|=1} \frac{m}{N}\l| \sum_{j=1}^N \eps_j Z_j^L(u) \r|.
\end{equation*}
Applying Talagrand's contraction inequality again, this time to the function $z\mapsto z \psi(z)$ that has Lipschitz constant bounded by 3, we see that 
\begin{multline}
\label{eq:Omega_m}
    \mb E\sup_{\|u\|=1} \frac{m}{N}\l| \sum_{j=1}^N \int_0^{Z^L_j(u)} (\phi_m(t) - \phi_m(0))dt\r| 
    \\
    \leq \frac{24L}{m^{1/4}}\mb E\sup_{\|u\|=1} \frac{m}{N}\l| \sum_{j=1}^N Z_j(u)\r| \leq \frac{24L}{m^{1/4}}\sqrt{\frac{md}{N}},
\end{multline}
which is $o\l(\sqrt{\frac{md}{N}}\r)$ as $m,N\to\infty$. 
Finally, we will show that under conditions of the lemma,
\begin{equation}
    \label{eq:B}
B:=\mb E\sup_{\|u\|=1} \frac{m}{N}\l| \sum_{j=1}^N \int_0^{Z^U_j(u)} (\phi_m(t) - \phi_m(0))dt\r|
\end{equation}
is also much smaller than $\sqrt{\frac{md}{N}}$ for large $m$. To this end, observe that for all $z_1<z_2$,  $\int_{z_1}^{z_2}\l| \phi_m(t) - \phi_m(0)\r| dt\leq 2\|\phi_m\|_\infty \l|z_2 - z_1\r|$, hence applying the symmetrization and contraction inequalities as before, we deduce that 
\[
B\leq 8\|\phi_m\|_\infty \mb E\sup_{\|u\|=1}\frac{m}{N}\l| \sum_{j=1}^N Z_j^U(u)\r|.
\]
We are going to estimate the right-hand side of the display above using Dudley's entropy integral bound. The challenge here is that the process inside the expectation is unbounded, and estimation of the empirical diameter of the class 
\[
\m H'=\{u\mapsto f_u(\cdot), \|u\|=1\}, \quad 
f_u(z) =  \dotp{\frac{z}{\sqrt{m-1}}}{u}\l(1-\psi\l(\l| \frac{\dotp{z}{u}}{m^{1/4}}\r|\r)\r)
\]
is quite delicate. First, observe that
\begin{multline*}
\sup_{\|u\|,\|v\|=1}\mb E (f_u(X) - f_v(X))^2
\leq 
2\sup_{\|u\|=1}\mb E|Z^U(u)|^2 
\\
\leq \frac{2}{m-1}\mb E \l[\l|\dotp{X}{u}\r|^2 I\l\{ |\dotp{X}{u}|\geq m^{1/4}\r\}\r]
\\
\leq \frac{2}{m-1}\underbrace{\frac{\mb E^{1/2}\dotp{X}{u}^4}{\sqrt m}}_{:=\Delta_m^2}
:=\Omega^2_m
\end{multline*}
where the term $\mb E\dotp{X}{u}^4$ does not depend on $u$ due to properties of elliptically symmetric distributions. 
Similarly, using the fact that $z\mapsto z(1-\psi(z))$ is Lipschitz continuous as well as the Cauchy-Schwarz inequality, we deduce that 
\begin{align*}
   \l(f_u(X) - f_v(X) \r)^2 &=\l| Z^U(u) - Z^U(v)\r|^2
   \\
    & \leq 9\l| Z(v) - Z(u)\r|^2 
    \leq \frac{9}{m-1}\|X\|^2\|u-v\|^2.
\end{align*}
It implies that the covering number for the class $\m H'$ with respect to the empirical $L_2$ distance $d_N^2(f_u,f_v):=\frac{1}{N}\sum_{j=1}^N \l(f_u(X_j) - f_v(X_j) \r)^2$ satisfies 
\[
N\l(\m H',\eps \r)\leq \l( \frac{C\|X\|_N}{\sqrt{m}\eps}\r)^d,
\]
where $\|X\|_N^2:=\frac{1}{N}\sum_{j=1}^N \|X_j\|^2$ and $C>0$ is an absolute constant. 
Finally, let 
\[
\Omega_{N,m}:=\sup_{u,v} d_N(f_u,f_v)
\]
be the empirical diameter of $\m H'$. We proceed to estimate $B$ via 
Dudley's entropy integral, yielding that
\begin{multline*}
    B\leq Cm \|\phi_m\|_\infty\sqrt{\frac{d}{N}}\mb E \l[\int_0^{\Omega_{N,m}} \log^{1/2}\l( \frac{C\|X\|_N}{\sqrt{m}\eps}\r)d\eps\r]
    \\
    = C\|\phi_m\|_\infty \sqrt{\frac{md}{N}}\mb E\l[ \|X\|_N \int_0^{\tilde\Omega_{N,m}}\log^{1/2}\l(\frac{1}{\eps}\r)d\eps\r]
\end{multline*}
where we set $\tilde\Omega_{N,m}:=C'\frac{\Omega_{N,m}\sqrt{m}}{\|X\|_N}$. 
In view of Cauchy-Schwarz inequality, for any $0<z<1$, $\int_0^z\log^{1/2}\l(1/\eps\r)d\eps \leq z \log^{1/2}(e/z)$, therefore 
\[
B\leq C\|\phi_m\|_\infty\sqrt{\frac{md}{N}} \mb E\l[  \|X\|_N \tilde\Omega_{N,m}\log^{1/2}\l(e/\tilde \Omega_{N,m}\r)\r].
\]
Consider the event $\m E=\l\{\tilde\Omega_{N,m}\leq C' \frac{\Omega_m \sqrt{m}}{ \sqrt{d}}\r\}$. Since the function $z\mapsto z\log^{1/2}(1/z)$ is increasing for $0<z\leq e^{-1/2}$ and $\mb E\|X\|_N\leq \sqrt{d}$, 
\begin{equation}
    \label{eq:int-1}
\mb E\l[ \|X\|_N \tilde\Omega_{N,m}\log^{1/2}\l(e/\tilde \Omega_{N,m}\r) I\{\m E\}\r]
\leq C_1 \Omega_m \sqrt{m}
\log^{1/2}\l( \frac{C_2\sqrt{d}}{\Omega_m \sqrt{m}} \r).
\end{equation}
On the other hand, 
\begin{multline*}
\mb E\l[ \|X\|_N \tilde\Omega_{N,m}\log^{1/2}\l(e/\tilde \Omega_m\r) I\{\m E^c\}\r]
\leq C\sqrt{m}\log^{1/2}\l( \frac{C_1 \sqrt{d}}{\Omega_m \sqrt{m}} \r)\mb E^{1/2} \Omega^2_{N,m}.  
\end{multline*}
It remains to estimate $\mb E  \Omega^2_{N,m}$. To this end, we will rely on results by \citet[][Theorem 3]{abdalla2022covariance}, see also \cite[Theorem 3.1]{jirak2025concentration}. Specifically, these bounds imply that whenever $\frac{d}{n}<c$ for $c>0$ small enough and $\mb E\l| \dotp{X}{u}\r|^p<\infty$ for some $p>4$, with probability at least $1-\frac{C(p)}{N}$ 
\begin{equation}
\label{eq:f_k}
   \sup_{\|u\|=1}\frac{1}{N}\sum_{j\in J} \dotp{X_j}{u}^2 \leq 
    C(p)\left(\frac{\max_{j=1,\ldots,N} \|X_j\|_2^2}{N} + \left(\frac{l}{N}\right)^{p/(4+p)} \log^4 \frac{N}{l}\right)
\end{equation}
uniformly for all $J\subseteq \{1,\ldots,N\}$ such that $|J|\leq l$, where $d\leq l\leq cN$ for a sufficiently small absolute constant $c>0$. 
To apply this result in our situation, note that 
\begin{equation}
    \label{eq:omega}
\Omega^2_{N,m} \leq 4\sup_{\|u\|=1}\frac{1}{(m-1)N}\sum_{j=1}^N \dotp{X_j}{u}^2 I\l\{ |\dotp{X_j}{u}|\geq m^{1/4}\r\},
\end{equation}
hence the upper bound for $\Omega^2_{N,m}$ follows from display \eqref{eq:f_k}: indeed, let $\hat u$ be a unit vector for which the supremum on the right-hand side of display \eqref{eq:omega} is attained. Then take 
\[
\hat J:=\{ j\in\{1,\ldots,N\}: \ |\dotp{X_j}{\hat u}|\geq m^{1/4} \}
\]
and $l=\max(d,|\hat J|)$. It only remains to check that $|\hat J|\leq cN$ with high probability. 
\begin{lemma}
\label{lemma:hat-J}
    Define the random set $J(u):=\{ j\in\{1,\ldots,N\}: \ |\dotp{X_j}{u}|\geq m^{1/4} \}$ and assume that $N\geq cd$ and that $m=m(N)$ is increasing. Then there exist absolute constants $c_1,c_2$ such that for a sufficiently large $N$, 
    \[
    \sup_{\|u\|=1}|J(u)|\leq c_1N
    \]
    with probability at least $1-e^{-c_2N}$.
\end{lemma}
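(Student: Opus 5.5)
We want a uniform bound over $u\in S^{d-1}$ on $|J(u)| = \sum_{j=1}^N I\{|\dotp{X_j}{u}|\geq m^{1/4}\}$, that is, on $N$ times the supremum of an empirical process indexed by the class of symmetric slabs' complements $\m S_m=\{x\mapsto I\{|\dotp{x}{u}|\geq m^{1/4}\}, \ \|u\|=1\}$. The plan is to split $\sup_u |J(u)|$ into its mean and its fluctuation, and to show that both are at most a small constant fraction of $N$.

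\textbf{Mean.} For each fixed $u$, Chebyshev's inequality together with isotropy ($\mb E\dotp{X}{u}^2=1$) gives
\[
\mb E|J(u)| = N\,\pr{|\dotp{X}{u}|\geq m^{1/4}} \leq \frac{N}{\sqrt m}.
\]
Since $m=m(N)\to\infty$, this is at most $c_1N/3$ once $N$ is large.

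\textbf{Expected supremum of the fluctuation.} Each set $\{x: |\dotp{x}{u}|\geq m^{1/4}\}$ is a union of two half-spaces. The class of half-spaces in $\mb R^d$ has VC dimension $d+1$, so unions of two of them form a VC class of dimension at most $C d$. The standard VC bound for empirical processes of indicators then yields
\[
\mb E\sup_{u}\l|\frac1N\sum_{j=1}^N \bigl(I\{|\dotp{X_j}{u}|\geq m^{1/4}\}-\pr{|\dotp{X}{u}|\geq m^{1/4}}\bigr)\r| \leq C\sqrt{\frac dN}.
\]
This is where the assumption $N\geq cd$ is used: I read it as $N\geq C'd$ for a suitably large constant, or equivalently $d/N$ small enough. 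I would first try a VC estimate without logarithmic factors, so that this term is at most $c_1/3$. Should only a version with $\log(N/d)$ be available, the requirement $d/N$ small still makes the term small, after adjusting the constants.

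\textbf{Concentration.} The function $\sup_u |J(u)|$ has bounded differences with constant $1$, so McDiarmid's inequality (or Lemma \ref{lemma:talagrand-bousquet} with $U=1$) gives deviations above the expectation of at most $\sqrt{N s/2}$ with probability at least $1-e^{-s}$. Choosing $s=c_2N$ with $c_2$ small makes this deviation at most $c_1N/3$. Adding the three contributions gives $\sup_u |J(u)|\leq c_1N$ with probability at least $1-e^{-c_2N}$.

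\textbf{Main obstacle.} The only delicate point is reconciling the constants: the statement needs $c_1$ small enough for the later application to \eqref{eq:f_k}, which requires $|\hat J|\leq cN$. The middle step therefore needs $d/N$ below a threshold depending on $c_1$, and this has to be arranged by choosing $c$ in the hypothesis $N\geq cd$ appropriately, with $c$ large in that normalization. The mean term is harmless for large $N$ because $m\to\infty$.
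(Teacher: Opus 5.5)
Your proof is correct, but it takes a different route from the paper. One caveat: you read the hypothesis $N\ge cd$ as ``$d/N$ below a threshold depending on $c_1$''. That reading is harmless here, because the lemma is only invoked in the regime $\max(d,t)m^2/N\le c$.

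\textbf{The paper's route.} The paper never handles the indicator class directly. It dominates $I\{Y_k(u)\ge 1\}$, with $Y_k(u)=|\dotp{X_k}{u}|/m^{1/4}$, by the $2$-Lipschitz ramp $\zeta(Y_k(u))$. Symmetrization and the contraction principle then reduce the expected supremum to that of the linear process: $\mb E S\le 16\,m^{-1/4}\,\mb E\|\sum_k \eps_k X_k\|\le 16\sqrt{Nd}/m^{1/4}$. Concentration comes from Bousquet's inequality (Lemma \ref{lemma:talagrand-bousquet}) with the small variance $\sigma^2\lesssim m^{-1/2}$.

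\textbf{What the paper's route buys.} The factor $m^{-1/4}$ comes from the Lipschitz constant of $\zeta(\cdot/m^{1/4})$. It makes the complexity term $o(N)$ for every $d\lesssim N$. So the paper needs no smallness of $d/N$, and it gets $\sup_u|J(u)|\le c_1N$ for any prescribed $c_1$ once $N$ (hence $m$) is large.

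\textbf{What your route buys.} You bound the VC dimension of slab complements by $Cd$ and use the log-free VC bound, which gives $\mb E\sup\lesssim\sqrt{Nd}$ with no gain in $m$. Hence you genuinely need $d/N\lesssim c_1^2$, which you correctly flag as the one delicate point. In exchange your argument is more elementary: bounded differences and McDiarmid replace Bousquet's inequality and the contraction steps. It also uses only isotropy (second moments, via Chebyshev) for the mean term, whereas the paper's write-up appeals to the fourth moment $M_4$.
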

\noindent The proof is similar to the argument in \citep[Theorem 3.1]{jirak2025concentration}. We provide the details in section \ref{proof:hat-J}. We deduce from Lemma \ref{lemma:hat-J} and display \eqref{eq:f_k} that 
\begin{multline*}
    \sqrt{m} \frac{|\hat J|}{N} \leq \sup_{\|u\|=1}\frac{1}{N}\sum_{j=1}^N \dotp{X_j}{u}^2 I\l\{ |\dotp{X_j}{u}|\geq m^{1/4}\r\} 
    \\
    \leq 
C(p)\left(\frac{\max_{j=1,\ldots,N} \|X_j\|_2^2}{N} + \sqrt{\frac{|\hat J|}{N}}\right), 
\end{multline*}
where we also used the fact that $p>4$, whence $\left(\frac{l}{N}\right)^{p/(4+p)} \log^4 \frac{N}{l}\leq C(p) \sqrt{\frac{l}{N}}$. 
Solutions to the inequality $x \leq a\sqrt{x} + b$ satisfy $x\leq 2\max(a^2, b)$, implying that on event of probability at least $1-\frac{C(p)}{N}$,
\[
|\hat J| \leq C_1(p)\max\l(\frac{\max_{j=1,\ldots,N} \|X_j\|_2^2}{\sqrt m}, \frac{N}{m} \r), 
\]
hence on the same event 
\[
\Omega^2_{N,m} \leq \frac{C_2(p)}{m}\max\l(\frac{\max_{j=1,\ldots,N} \|X_j\|_2^2}{N}, \frac{1}{\sqrt m} \r).
\]
It remains to note that if $|\hat J|<d$, then taking $l=d$ in \eqref{eq:f_k} implies that 
\[
\Omega^2_{N,m}\leq \frac{C_2(p)}{m}\max\l(\frac{\max_{j=1,\ldots,N} \|X_j\|_2^2}{N}, \sqrt{\frac{d}{N}}\r).
\]
To summarize, we have shown that on event $\m E$ of probability at least $1-\frac{C(p)}{N}$, 
\begin{equation}
    \label{eq:omega-f}
\Omega^2_{N,m}\leq \frac{C_2(p)}{m}\max\l(\frac{\max_{j=1,\ldots,N} \|X_j\|_2^2}{N}, \frac{1}{\sqrt m} ,\sqrt{\frac{d}{N}}\r).
\end{equation}
We also have the following trivial bound that will be used on the complement $\m E^c$: 
\[
\Omega^2_{N,m}\leq \frac{4}{(m-1)N}\sum_{j=1}^N \|X_j\|^2 I\{\|X_j\|\geq m^{1/4}\}.
\]
Combined with display \eqref{eq:omega-f}, it yields that 
\begin{multline*}
   m \mb E\Omega^2_{N,m} \leq C(p)\l[\max\l( \frac{d}{N},\frac{1}{\sqrt m},\frac{\mb E \max_{j=1,\ldots,N} \|X_j\|_2^2}{N}\r) + 
    \mb E\|X\|^2 I\{\m E^c\}\r].
\end{multline*}
Since 
\[
\l(\mb E \max_{j=1,\ldots,N} \|X_j\|_2^2\r)^2 \leq \mb E\sum_{j=1}^N \|X_j\|^4 \leq C N d^2 
\]
whenever $\mb E \l| \dotp{X}{u}\r|^4<\infty$, and
\[
\mb E\|X\|^2 I\{\m E^c\}\leq \sqrt{\mb E\|X\|^4\pr{\m E^c}} \leq C_1(p)\frac{d}{\sqrt N}
\]
we conclude that 
\begin{equation}
    \label{eq:omega-final}
\mb E \Omega^2_{N,m}\leq \frac{C(p)}{m}\max\l( \frac{1}{\sqrt m},\frac{d}{\sqrt N}\r).
\end{equation}
The previous display implies that the quantity $B$ introduced in \eqref{eq:B} admits an upper bound of the form
\begin{equation*}
    B \leq C(p)\|\phi_m\|_\infty\sqrt{\frac{md}{N}} \l( \max\l(\Delta_m,\frac{\sqrt{d}}{N^{1/4}}\r)
\log^{1/2}\l( \frac{C_2\sqrt{d}}{\Delta_m}\r) 
 \r) 
\end{equation*}
where $\Delta_m = \Omega_m\sqrt{m} = \frac{\sqrt{2}\mb E^{1/4}\dotp{X}{u}^4}{m^{1/4}}$. Recalling that $\sup_{m\geq m_0}\|\phi_m\|_\infty<\infty$ for sufficiently large $m_0$ (see Lemma \ref{lemma:uniform-lipschitz}), we conclude that $B\ll \sqrt{\frac{md}{N}}$ whenever $\frac{\sqrt N}{d}\gg \log(m)+\log(d)$ and $m\gg \log^2(d)$. 

\noindent In summary, we have shown that 
\begin{equation}
    \label{eq:expection}
    \mb E\sup_{h\in \m H_m}\l|\frac{m}{N} \sum_{i=1}^N h^{(1)}\l(X_i\r)\r| \leq 
    \sqrt{\frac{dm}{N}}\l( \frac{1}{\sqrt{2\pi}} + \alpha_m\r), 
\end{equation}
where $\alpha_m = \l| \phi_m(0) - \frac{1}{\sqrt{2\pi}}\r| + C(p)\max\l(\frac{1}{m^{1/4}},\frac{\sqrt{d}}{N^{1/4}}\r)\log^{1/2}(dm)$ and $C(p)$ also depends on the distribution $F_1$ of the random variable $\dotp{X}{u}$. 
\begin{remark}
        \label{remark:expectation-simple}
        Since $\|\phi_m\|_\infty \to \frac{1}{\sqrt{2\pi}}$ as $m\to\infty$  in view of Lemma \ref{lemma:implication-of-uniform-lipschitz}, the function $\psi(z)=\int_0^z \phi_m(t)dt$ is a contraction. Therefore, consecutive application of symmetrization and Talagrand's contraction inequalities yields that 
        \begin{multline*}
        \mb E\sup_{\|u\|=1}\l|\frac{m}{N} \sum_{i=1}^N h_u^{(1)}\l(X_i\r)\r| \leq 4 \|\phi_m\|_\infty \mb E\sup_{\|u\|=1}\l| \sum_{j=1}^N Z_j(u)\r| 
        \\
        \leq 4 \|\phi_m\|_\infty \sqrt{\frac{md}{N}} = 4 \l(\frac{1}{\sqrt{2\pi}} +o(1)\r) \sqrt{\frac{md}{N}}.
        \end{multline*}
    As a result, we get an order-correct estimate with suboptimal constants without assuming existence of 4th order moments or requiring that $d\ll \sqrt{N}$. 
    \end{remark}

Next, we will estimate $\sup_{h \in \mathcal{H}_m} \var(h^{(1)}(X))$. Recall that  $Z_1(u) = \dotp{\frac{X_1}{\sqrt{m-1}}}{u}$ and that 
    \[
    h_u^{(1)}(X_1) = \phi_m(0) Z_1(u) + \int_0^{Z_1(u)} (\phi_m(t) - \phi_m(0))dt,
    \]
Moreover, it follows from \eqref{eq:lipschitz} that $\int_0^{Z_1(u)} (\phi_m(t) - \phi_m(0))dt \leq \frac{L}{2} Z^2_1(u)$, so that 
\begin{multline*}
    \mb E \l(h_u^{(1)}(X_1)\r)^2 
    \\
    \leq \frac{\phi_m^2(0)\mb E\dotp{X_1}{u}^2}{m-1} + \frac{L^2}{4(m-1)^2}\mb E\dotp{X_1}{u}^4 + \frac{L\phi_m(0)}{(m-1)^{3/2}}\mb E\l|\dotp{X_1}{u} \r|^3 
    \\
    \leq\frac{1}{2\pi (m-1)} \l( 1 + C\l(\l|\phi_m(0)-\frac{1}{\sqrt{2\pi}}\r| + \frac{1}{\sqrt m}\r)\r) \leq \frac{1}{2\pi m} \l( 1 + \alpha_m\r)
\end{multline*} 
\begin{remark}
\label{remark:variance-simple}
    A slightly longer argument based on the decomposition \eqref{eq:z-decomposition} implies that $\mb E \l(h_u^{(1)}(X_1)\r)^2 \leq \frac{1}{2\pi m} \l( 1 + o(1)\r)$ where $o(1)\to 0$ as $m\to\infty$ assuming only existence of second moments.
\end{remark}
Finally, Theorem \ref{thm:Bernstein-bound} applied with $V=d$ (indeed, the set of indicator functions of half-spaces passing through the origin is Euclidean for $V=d$) yields that 
\begin{multline*}  
R_{N,m}\leq (1+\eps)\sqrt{\frac{dm}{N}}\l( \frac{1}{\sqrt{2\pi}} + \alpha_m\r) +
\sqrt{\frac{2tm}{N\sqrt{2\pi}}}\l( 1 + \alpha_m\r)
\\
+C\l(\frac{1}{\eps}+\sqrt{\frac{dm^2\log(m)}{N}}\r)\frac{tm}{N}
    +\frac{Cd}{N}\log^{3/2}(m).
\end{multline*}
\end{proof}
Finally, combining inequality \eqref{eq:depth} with the bounds of Lemma \ref{lemma:cont-modulus} and Lemma \ref{lemma:halfspace}, we deduce that for any $\eps>0$,
\begin{equation}
    \label{eq:depth-final}
D_m(\wh\mu_{N,m})\geq \frac12 - \frac{2(1+o_\eps(1))}{\sqrt{2\pi}}\l[(1+\eps)\sqrt{\frac{dm}{N}} +
    \sqrt{\frac{2tm}{N}} \r]
\end{equation}
with probability at least $1-4e^{-t}$. Let us remark that when $m^2=o(N)$ and $t\ll \frac{N}{m^2}$, this probability can be improved to $1-(1+o(1))e^{-t}$ where $o(1)\to 0$ as $N,m\to\infty$. This can be achieved by applying Lemmas \ref{lemma:cont-modulus} and \ref{lemma:halfspace} with $t'=t + s_{N,m}$ for some sequence $s_{N,m}\to\infty$. 

To complete the proof, it remains connect the depth estimate given in display \eqref{eq:depth-final} to the distance between $\wh\mu_{N,m}$ and $\mu$. 
\begin{lemma}
\label{lemma:depth}
    Assume that $z\in \mb R^d$ is such that $D_m(z)\geq \frac12 - \delta$ where $\delta=\delta(\Phi_m)$. Then
    \[
    \|z-\mu\|\leq \frac{\l(\sqrt{2\pi}+o(1)\r)\delta }{\sqrt m}.
    \]
    Here, $o(1)$ is a sequence that converges to $0$ as $m\to\infty$ and $\delta\to 0$.
\end{lemma}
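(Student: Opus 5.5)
We work with $\mu=0$; the general case follows by translation. Here $\Phi_m$ is the law of $\frac{1}{\sqrt m}\sum_{j\le m}X_j$, and $D_m(z)=\inf_{u\in S^{d-1}}\Phi_m(H(z,u))$. By elliptical symmetry, for every unit $u$ the projection $\dotp{Y}{u}$ with $Y\sim\Phi_m$ has distribution function $F_m$, which is symmetric about $0$ and has density $\phi_m$. Therefore
\[
\Phi_m(H(z,u)) = \mathbb{P}\l(\dotp{Y}{u}\ge \dotp{z}{u}\r) = 1-F_m\l(\dotp{z}{u}\r).
\]
The infimum over $u$ is attained at $u=z/\|z\|$, since $F_m$ is nondecreasing. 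This reduces the problem to one dimension:
\[
D_m(z)=1-F_m(\|z\|)=\tfrac12-\int_0^{\|z\|}\phi_m(t)\,dt .
\]
One point needs care about scaling. The points $\bar X_J$ are averages, so $\sqrt m\,\bar X_J\sim\Phi_m$ and the depth in the paper's definition of $D_{N,m}$ is evaluated at $\sqrt m\, z$. Consequently $D_m(z)$ in the lemma should be read as $1-F_m(\sqrt m\|z\|)$. This reading is exactly what produces the factor $1/\sqrt m$ in the statement, and I will use it.

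The hypothesis $D_m(z)\ge \tfrac12-\delta$ then becomes
\[
\int_0^{s}\phi_m(t)\,dt\le \delta, \qquad s:=\sqrt m\|z\|.
\]
We lower bound the integral using the facts established earlier. By Lemma~\ref{lemma:implication-of-uniform-lipschitz}, $\phi_m(0)=\frac1{\sqrt{2\pi}}+o(1)$, and $\phi_m$ is Lipschitz with constant at most $L$ uniformly in large $m$. Hence $\phi_m(t)\ge \phi_m(0)-Lt$ for $t\ge0$, and therefore
\[
\int_0^{s}\phi_m(t)\,dt \ \ge\ \phi_m(0)s-\tfrac L2 s^2 .
\]
If we already knew that $s$ is small, this would give $s\le \delta/(\phi_m(0)-Ls/2)=(\sqrt{2\pi}+o(1))\delta$, which is the claim after dividing by $\sqrt m$.

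The main obstacle is ruling out large $s$ a priori, which amounts to showing $s\to0$ as $\delta\to 0$. Since $t\mapsto\int_0^t\phi_m$ is nondecreasing, it suffices to show that $\int_0^{s_0}\phi_m\ge c_0>0$ for some fixed $s_0$ and all large $m$. Then $\delta<c_0$ forces $s<s_0$.

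To get this uniform-in-$m$ lower bound, I would use the Lipschitz bound once more. Take $s_0=\phi_m(0)/(2L)$, which is bounded below by $1/(4L\sqrt{2\pi})$ for large $m$. On $[0,s_0]$ we have $\phi_m\ge\phi_m(0)/2$, so $\int_0^{s_0}\phi_m\ge \phi_m(0)^2/(4L)=:c_0$, bounded away from zero. Alternatively, the CLT (uniform convergence $F_m\to\Phi$, via Pólya's theorem) gives the same conclusion. Once $s<s_0$ is established, the quadratic inequality $\phi_m(0)s-\frac L2 s^2\le\delta$ yields $s\le \delta/\phi_m(0)\cdot(1+O(\delta))$.

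The two resulting error sources are $|\phi_m(0)-1/\sqrt{2\pi}|\to0$ as $m\to\infty$, and the $O(\delta)$ factor, which vanishes as $\delta\to0$. Both are absorbed into the $o(1)$ term, completing the proof.
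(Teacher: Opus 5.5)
Your proposal is correct and follows essentially the same route as the paper. You reduce to the one-dimensional projection via elliptical symmetry, read $D_m(z)$ as $1-F_m(\sqrt m\|z\|)$ exactly as the paper does, and then use the uniform Lipschitz bound on $\phi_m$ together with $\phi_m(0)\to 1/\sqrt{2\pi}$ near the origin. Your a priori localization step is a genuine addition: the monotonicity of $s\mapsto\int_0^s\phi_m$ and the uniform bound $\int_0^{s_0}\phi_m\ge \phi_m(0)^2/(4L)$ force $\sqrt m\|z\|<s_0$ once $\delta$ is small, which justifies a restriction the paper simply assumes when it works under $\sqrt m\|z\|\le\alpha(\varepsilon)$.
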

\begin{proof}
Let $u$ be a unit vector and recall that $\phi_m(x)$ is the probability density function of the one-dimensional projection $\frac{1}{\sqrt{m}}\sum_{j=1}^m \dotp{X_j}{u}$ (as we noted before, it does not depend on $u$ for elliptically symmetric distributions). Denote 
\[
\Gamma_m=\sup_{x\in \mb R}\l|\phi_m(x)-\frac{1}{\sqrt{2\pi}}e^{-x^2/2}\r|.
\]
It follows from Lemmas \ref{lemma:uniform-lipschitz} and \ref{lemma:implication-of-uniform-lipschitz} that $\Gamma_m\to 0$ as $m\to\infty$ and that $\phi_m$ is Lipschitz continuous uniformly over all sufficiently large $m$. 
We deduce that for any $u$ such that $\|u\| = 1$ and any $\eps>0$, there exists $\alpha(\eps)>0$ such that 
\ml{
D_m(z) = \inf_{\|u\|=1}\pr{\dotp{\frac{1}{\sqrt{m}}\sum_{j=1}^m X_j}{u} \geq \sqrt{m}\dotp{z}{u}} 
\\
= \frac{1}{2} - \sup_{\|u\|=1}\int_0^{\sqrt{m}\dotp{z}{u}}\phi_m(z) dz
\leq \frac12 - \l(\phi_m(0) - \eps\r)\sqrt{m}\|z\|_2
\\
\leq \frac12 - \l(\frac{1}{\sqrt{2\pi}} - 2\eps\r)\sqrt{m}\|z\|_2
}
whenever $\sqrt{m}\|z\|_2\leq \alpha(\eps)$. Since it also holds that $D_m(z)\geq \frac12 - \delta$, we obtain that
\[
\|z\|\leq \l(\frac{1}{\sqrt{2\pi}} + 2\eps\r)^{-1}\frac{\delta}{\sqrt m}.
\]
The conclusion follows.
\end{proof}
\noindent It remains to note that in our case, Lemma \ref{lemma:depth} applies with 
\[
\delta=\frac{2(1+o_\eps(1))}{\sqrt{2\pi}}\l[(1+\eps)\sqrt{\frac{dm}{N}} +
    \sqrt{\frac{2tm}{N}} \r].
\]
Since $m\to\infty$ and $\max(d,t)\frac{m^2}{N}$ is bounded, $\delta\to 0$ as $m\to\infty$. Claim (a) of the theorem follows, while modifications outlined in Remarks \ref{remark:expectation-simple} and \ref{remark:variance-simple} yield claim (b).


\subsection{Proof of Theorem \ref{thm:normality}}


As we mentioned in section \ref{sec:THL}, due to affine equivariance of Tukey's median, it suffices to assume that $\mu=\mb E X_1 = 0$ and that $\Sigma = I_d$. Observe that the following representation holds:
\begin{align}\label{eq:u-z-process}
    \sqrt{m}\wh \mu_{N,m} = \argmax_{z \in \mb R^d} \inf_{u \in \mathcal{S}^{d-1}} \l[U_{N,m}\l(u,z\r) + \pr{\dotp{\bar X_m  - z}{u}\geq 0}\r]\,,
\end{align}
where
\begin{multline*}
    U_{N,m}(u,z) = \frac{1}{{N\choose m}} \sum_{J\in \m A_{N}^{(m)} }I\left\{ \dotp{\bar X_J - \frac{z}{\sqrt{m}}}{u}\geq 0\right\} \\
    - \pr{\dotp{\bar X_m  - \frac{z}{\sqrt{m}}}{u}\geq 0} \,.
\end{multline*}
Let us also recall that $\bar X_J = \frac{1}{m}\sum_{j \in J}X_j$ and that $\bar X_m = \frac{1}{m}\sum_{j=1}^m X_j$.

Before we present the details of the proof, we briefly map out the strategy. We will show that the process on the right hand side of display \eqref{eq:u-z-process}, after proper scaling, converges weakly to the process
\begin{align*}
    (u,z)\mapsto\dotp{\frac{W}{\sqrt{2\pi}}}{u} - \frac{1}{\sqrt{2\pi}}\dotp{z}{u}, \ \|u\|=1, \ \|z\|\leq R,
\end{align*}
where $W \sim \mathcal{N}(0,I_d)$ and $R>0$. 
Then the ``argmax theorem'' (see Theorem 3.2.2 in \cite{wellner2013weak}) would allow us to conclude that 
\begin{align*}
    \sqrt{N}\l( \wh\mu_{N,m} - \mu \r) \xrightarrow[]{d}  \arg\max_{z\in \mb R^d} \inf_{u\in S^{d-1}} \l[\dotp{\frac{W}{\sqrt{2\pi}}}{u} - \frac{1}{\sqrt{2\pi}}\dotp{u}{z} \r] = W\,.
\end{align*}
The core technicality of this procedure relies on the following lemma.
\begin{lemma}\label{thm:stochastic_convergence}
Let $\mathcal{S}^d_q = \{x \in \mb R^d: \|x\| \leq q\}$ and $\mathcal{S}^{d-1} = \{x \in \mb R^d : \|x\| = 1\}$ be two sets, and denote $\bar{\mathcal{S}} = \mathcal{S}^{d-1} \times \mathcal{S}^d_q$. Under the assumptions of the theorem, the process 
\[
(u,z) \mapsto \sqrt{\frac{N}{m}} U_{N,m}\l(u,z\r)
\]
defined on $\bar{\mathcal{S}}$ converges weakly to $\frac{1}{\sqrt{2\pi}}e^{-\frac{\l\lvert \dotp{z}{u}\r\rvert^2}{2}}\dotp{W}{u}$ where $W$ is a $d$-dimensional standard Gaussian vector.
\end{lemma}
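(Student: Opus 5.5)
The proof splits into two parts, finite-dimensional convergence and asymptotic equicontinuity (tightness) in $\ell^\infty(\bar{\mathcal S})$, with the Hoeffding decomposition \eqref{eq:h-decomposition} as the organizing tool. For fixed $(u,z)$, let $h_{u,z}(x_1,\ldots,x_m)=I\{\dotp{\frac1m\sum x_j - z/\sqrt m}{u}\geq 0\}$, so that $U_{N,m}(u,z)=U_{N,m}(h_{u,z})-\mb E h_{u,z}$. The class $\{h_{u,z}\}$ consists of indicators of half-spaces, so it is VC with $V\asymp d$. Writing
\[
\sqrt{\tfrac Nm}\,U_{N,m}(u,z)=\sqrt{\tfrac mN}\sum_{i=1}^N h^{(1)}_{u,z}(X_i)+\sqrt{\tfrac Nm}\,\mathcal R_{N,m}(u,z),
\]
Lemma \ref{lemma:higher-order} with $V=d$ fixed and $p$ fixed gives $\mb E\sup|\mathcal R_{N,m}|\lesssim \frac{d}{N}\log^{3/2} m+\frac{m}{N}$. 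Multiplying by $\sqrt{N/m}$ yields a bound of order $\sqrt{m/N}+\frac{\log^{3/2}m}{\sqrt{Nm}}$, which tends to $0$ because $m^2/N$ is bounded and $m\to\infty$. The remainder is therefore uniformly negligible, and it suffices to study the H\'ajek process $G_N(u,z):=\sqrt{m/N}\sum_i h^{(1)}_{u,z}(X_i)$.

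Next, the projection is identified as in the proof of Lemma \ref{lemma:halfspace}. With $Z_i(u)=\dotp{X_i}{u}/\sqrt{m-1}$,
\[
h^{(1)}_{u,z}(X_i)=F_{m-1}\Big(Z_i(u)-\tfrac{\dotp{z}{u}}{\sqrt{m-1}}\cdot\sqrt{\tfrac{m}{m-1}}\,\Big)-\text{mean},
\]
where the shift is $\approx \dotp{z}{u}$ on the $F_{m-1}$ scale. Expanding to first order in the small argument $Z_i(u)$ gives $h^{(1)}_{u,z}(X_i)\approx\phi_{m-1}(-\dotp{z}{u})\,Z_i(u)$ up to a second-order term. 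Hence
\[
G_N(u,z)=\phi_{m-1}(\dotp{z}{u})\,\Big\langle \tfrac{1}{\sqrt N}\sum_i X_i,u\Big\rangle\sqrt{\tfrac{m}{m-1}}+E_N(u,z),
\]
using the symmetry of $\phi$. By Lemmas \ref{lemma:uniform-lipschitz} and \ref{lemma:implication-of-uniform-lipschitz}, $\phi_{m-1}\to\frac{1}{\sqrt{2\pi}}e^{-x^2/2}$ uniformly, and $\frac1{\sqrt N}\sum X_i\Rightarrow W$ by the multivariate CLT (finite second moments, isotropy). The main term therefore converges, uniformly over the compact set $\bar{\mathcal S}$, to the claimed limit $\frac1{\sqrt{2\pi}}e^{-\dotp{z}{u}^2/2}\dotp{W}{u}$ by the continuous mapping theorem. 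Because this main term is an explicit continuous functional of a single random vector, tightness of it is automatic.

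The main obstacle is showing that $\sup_{\bar{\mathcal S}}|E_N|\to0$ in probability under only second moments. The plan is to reuse the truncation \eqref{eq:z-decomposition}. On the truncated part $Z_i^L$, the Lipschitz property of $\phi_{m-1}$, uniform for large $m$, makes the integrand error Lipschitz with constant $\lesssim m^{-1/4}$. Symmetrization and Talagrand's contraction, exactly as in \eqref{eq:Omega_m}, then bound its expected supremum by $m^{-1/4}\sqrt{m/N}\,\mb E\|\sum Z_i\|\lesssim m^{-1/4}\sqrt d\to 0$; the extra $z$-dependence enters only through a Lipschitz shift and is handled the same way. On the tail part $Z_i^U$, the integrand is $2\|\phi\|_\infty$-Lipschitz. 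After contraction one needs $\sqrt{m/N}\,\mb E\sup_u|\sum_i\eps_i Z_i^U(u)|\lesssim \sup_u \mb E^{1/2}[\dotp{X}{u}^2 I\{|\dotp{X}{u}|\geq m^{1/4}\}]$, up to a $d$-dependent entropy factor, which is fine since $d$ is fixed. This quantity tends to $0$ by uniform integrability of $\dotp{X}{u}^2$ (a single distribution by elliptical symmetry). If the entropy/diameter step via Dudley proves delicate without higher moments, the fallback is a direct bound on the sum of squares by $\mb E\|X\|^2I\{\|X\|\geq m^{1/4}\}\to0$. Finally, the centering of the $z$-shift term must be checked: $\mb E h^{(1)}=0$ by definition, so no drift survives. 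Combining the three pieces with Slutsky's lemma gives the weak convergence in $\ell^\infty(\bar{\mathcal S})$.
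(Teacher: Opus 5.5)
Your architecture is sound and genuinely different from the paper's. The paper never linearizes uniformly. It gets finite-dimensional convergence from Cram\'er--Wold and Lindeberg, using only the \emph{pointwise} $L_2$ approximation \eqref{eq:convergence-L2} of $\sqrt m\,h^{(1)}_u(\cdot;z)$ by $\frac{1}{\sqrt{2\pi}}e^{-\langle z,u\rangle^2/2}\langle X,u\rangle$. It gets asymptotic equicontinuity of the H\'ajek process from the Lipschitz-in-parameter bound of Lemma~\ref{lemma:Delta2} plus Dudley's integral over the $2d$-dimensional parameter set. That one vector $W$ drives all coordinates is read off from the covariance $\Sigma_M$.

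You instead aim for a uniform representation $G_N(u,z)=\phi_{m-1}(s)\sqrt{m/(m-1)}\,\langle S_N,u\rangle+E_N(u,z)$, where $S_N=N^{-1/2}\sum_i X_i$, $s=-\sqrt{m/(m-1)}\langle z,u\rangle$, and $\sup_{\bar{\mathcal S}}|E_N|\to0$. The multivariate CLT and the Lipschitz map $w\mapsto(\phi(\langle z,u\rangle)\langle w,u\rangle)_{(u,z)}$ from $\mathbb R^d$ into $\ell^\infty(\bar{\mathcal S})$ then finish the proof. This needs no Lindeberg bookkeeping, makes the structure of the limit transparent, and yields exactly what the later argmax step uses. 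The price is that the linearization error must be controlled uniformly, not pointwise. Two minor points: the shift inside $F_{m-1}$ is $\sqrt{m/(m-1)}\langle z,u\rangle$, without your extra $1/\sqrt{m-1}$. Also, Lemma~\ref{lemma:higher-order} carries a factor $\sqrt{1+dm^2\log(m)/N}\asymp\sqrt{\log m}$, so the remainder is $O(\sqrt{m\log(m)/N})$, which still vanishes.

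That uniform step does not go through as written. Write $a=\langle X,u\rangle/\sqrt{m-1}$ and $E_N=\sqrt{m/N}\sum_i(r(a_i,s)-\mathbb E r(a_i,s))$ with $r(a,s)=\int_0^a(\phi_{m-1}(s+t)-\phi_{m-1}(s))\,dt$. The contraction step of Lemma~\ref{lemma:halfspace} needs maps that do not depend on the index. Here $r$ depends on the index through $s$, already for a single fixed $z\neq0$, so ``handled the same way'' is not justified. Treating $(a,s)$ as a vector argument fails quantitatively. On the truncated region $r$ is $O(m^{-1/4})$-Lipschitz in $s$, but $s$ is of order one and enters a Rademacher sum of size $\sqrt N$. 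After the factor $\sqrt{m/N}$ this gives a bound of order $q\,m^{1/4}$. Also, $\int_0^{Z}\neq\int_0^{Z^L}+\int_0^{Z^U}$ in the transition zone of the cutoff in \eqref{eq:z-decomposition}; you should split the output $r$, not its argument.

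The repair is to chain jointly in $(u,z)$, in the spirit of your fallback; in fixed $d$ no truncation is needed.
\begin{itemize}
\item Since $|r(a,s)|\le\min(La^2/2,\,2\|\phi_{m-1}\|_\infty|a|)$, the class $\{\sqrt m\,r\}$ has empirical $L_2$ radius $D_N$ with $\mathbb E D_N^2\lesssim\mathbb E\min(\|X\|^4/m,\|X\|^2)\to0$. This follows by dominated convergence and uses only $\mathbb E\|X\|^2<\infty$.
\item The argument of Lemma~\ref{lemma:Delta2} gives $\sqrt m\,|r_{u_1,z_1}-r_{u_2,z_2}|\lesssim(1+q)\|X\|\,\|(u_1,z_1)-(u_2,z_2)\|$. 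Hence the covering numbers are at most $(C(1+q)^2\|X\|_N/\varepsilon)^{2d}$, where $\|X\|_N^2=N^{-1}\sum_i\|X_i\|^2$.
\item Symmetrization and Dudley's bound then give $\mathbb E\sup_{\bar{\mathcal S}}|E_N|\lesssim\sqrt d\,\mathbb E\big[D_N\log^{1/2}\big(C(1+q)^2\|X\|_N/D_N\big)\big]\to0$.
\end{itemize}
Equivalently, $\mathrm{var}(E_N(u,z))\le m\,\mathbb E r^2\to0$ pointwise, and $E_N$ is asymptotically equicontinuous. This holds because $G_N$ is equicontinuous by the paper's argument, and your main term is Lipschitz in $(u,z)$ with constant $O_P((1+q)\|S_N\|)$. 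With this repair your proof is complete.
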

\noindent The proof of this lemma is presented in Section \ref{proof:thm:stochastic_convergence}. Now, let us outline the rest of the proof. The first step is to restrict the random vector $\sqrt{m}\wh \mu_{N,m}$ to the neighborhood of the origin. Since $\limsup_{m,N \rightarrow \infty} \frac{m^2}{N} < c$ for some small $c$ and $d$ is fixed, Lemma \ref{lemma:prelim} implies that $\l\|\sqrt{m}\wh \mu_{N,m}\r\| \leq \frac{1}{r_N}$ with probability approaching $1$ where $r_N = \sqrt{\frac{N}{m \log(m)}}$.
That is, we conclude that
\begin{multline*}
    \sqrt{N}\wh\mu_{N,m} = \argmax_{z : \|z\| \leq \frac{\sqrt{\frac{N}{m}}}{r_N}}\inf_{u \in \mathcal{S}^{d-1}}\l[\sqrt{\frac{N}{m}} U_{N,m}\l(u, \frac{z}{\sqrt{N/m}}\r) \r.\\
    +\l. \sqrt{\frac{N}{m}} \mb P \l(\dotp{\sqrt{m}\bar X_J - \frac{z}{\sqrt{N/m}}}{u}\geq 0\r)\r]
\end{multline*}
with probability approaching $1$. Therefore, it suffices to consider the asymptotic behavior of the above maximizer. Note that
\begin{multline*}
    \sqrt{\frac{N}{m}} U_{N,m}\l(u, \frac{z}{\sqrt{N/m}}\r) + \sqrt{\frac{N}{m}} \mb P \l(\dotp{\sqrt{m}\bar X_J - \frac{z}{\sqrt{N/m}}}{u}\geq 0\r) \\
    = \sqrt{\frac{N}{m}} U_{N,m}\l(u, \frac{z}{\sqrt{N/m}}\r)  +  \sqrt{\frac{N}{m}} \mb P \l(W_{u,m} \leq\dotp{-\frac{z}{\sqrt{N/m}}}{u}\r)\,,
\end{multline*}
where $W_{u,m} = \dotp{\sqrt{m} \bar X_m}{u}$. We remark that for  elliptically symmetric distributions with identity covariance, the law of $W_{u,m}$ is independent of $u$. Denote the density of $W_{u,m}$ by $\phi_m$; its existence is guaranteed by Lemma \ref{lemma:uniform-lipschitz}. In view of Taylor's expansion,
\begin{multline*}
    \sqrt{\frac{N}{m}} \mb P \l(W_{u,m} \leq\dotp{-\frac{z}{\sqrt{N/m}}}{u}\r) \\
    = \sqrt{\frac{N}{m}} \mb P \l(W_{u,m} \leq0\r) - \phi_m\l(-t \dotp{\frac{z}{\sqrt{N/m}}}{u}\r) \dotp{z}{u}\,,
\end{multline*}
where $t \in (0,1)$ is a positive number. Since $X_1 \sim \mathcal{E}\l(0, I_d, F\r)$, $\mb P \l(W_m\geq 0\r) = \frac{1}{2}$, hence suffices to consider
\begin{align*}
    \argmax_{z : \|z\| \leq \frac{\sqrt{N/m}}{r_N}}\inf_{u \in \mathcal{S}^{d-1}} \sqrt{\frac{N}{m}}  U_{N,m}\l(u, \frac{z}{\sqrt{N/m}}\r) - \phi_m\l(-t \dotp{\frac{z}{\sqrt{N/m}}}{u}\r) \dotp{z}{u} \,.
\end{align*}
Note that for $m$ sufficiently large, Lemma \ref{lemma:uniform-lipschitz} implies that there exists a constant $L$ independent of $m$ such that 
\[
\l\lvert \phi_m\l(-t \dotp{\frac{z}{\sqrt{N/m}}}{u}\r) - \phi_m\l(0\r) \r\rvert \leq L \l\lvert \dotp{\frac{z}{\sqrt{N/m}}}{u}\r\rvert. 
\]
Thus, uniformly over all $\|z\| \leq \frac{\sqrt{N/m}}{r_N}$ and $u \in \mathcal{S}^{d-1}$,
\begin{multline*}
    \l\lvert \phi_m\l(-t \dotp{\frac{z}{\sqrt{N/m}}}{u}\r) \dotp{z}{u} - \phi_m\l(0\r)\dotp{z}{u}\r\rvert \\
    \leq \frac{L}{\sqrt{N/m}} \l\lvert \dotp{z}{u}\r\rvert^2 \leq \frac{L\sqrt{N/m}}{r^2_N}\,,
\end{multline*}
and the right-hand side converges to $0$ since $r_N = \sqrt{\frac{N}{m \log(m)}}$. Therefore,
\begin{align*}
    \phi_m\l(-t \dotp{\frac{z}{\sqrt{N/m}}}{u}\r) \dotp{z}{u} = \phi_m(0)\dotp{z}{u} + o(1)
\end{align*}
as $m,N \rightarrow \infty$ uniformly over all $u \in \mathcal{S}^{d-1}$ and $z$ such that $\|z\| \leq \frac{\sqrt{N/m}}{r_N}$. Moreover, in view of Lemma \ref{lemma:implication-of-uniform-lipschitz}, $\phi_m(0) \rightarrow \phi(0)$ where $\phi$ is the density of the standard normal law, implying that $\phi_m(0)$ is bounded away from $0$ for large $m$. Therefore, 
\begin{align*}
    \sqrt{N}\wh\mu_{N,m} = \argmax_{z : \|z\| \leq \frac{\sqrt{\frac{N}{m}}}{r_N}}\inf_{u \in \mathcal{S}^{d-1}} \l[\frac{\sqrt{\frac{N}{m}}  U_{N,m}\l(u, \frac{z}{\sqrt{N/m}}\r)}{\phi_m(0)} -  \dotp{z}{u} + o(1)\r] \,.
\end{align*}
Next, Slutsky's lemma and Lemma \ref{thm:stochastic_convergence} imply that the process 
\begin{align*}
    (u,z) \mapsto \frac{\sqrt{\frac{N}{m}}  U_{N,m}\l(u, \frac{z}{\sqrt{N/m}}\r)}{\phi_m(0)} -  \dotp{z}{u}
\end{align*}
converges to $(u,z) \mapsto \dotp{W - z}{u}$ weakly. Clearly,
\begin{align*}
    W = \argmax_{z : \|z\| \leq \frac{\sqrt{\frac{N}{m}}}{r_N}}\inf_{u \in \mathcal{S}^{d-1}} \dotp{W - z}{u}\,.
\end{align*}
Thus, the argmax theorem \cite{wellner2013weak} applies, yielding that $\sqrt{N}\wh\mu_{N,m} \rightarrow W$ in distribution as $N \rightarrow \infty$.

\subsection{Proof of Lemma \ref{lemma:E_sup_sum_W}}
\label{proof:lemma:E_sup_sum_W}

    Since $\mb E h^{(j)}(X_1,\ldots,X_j) = 0$ for every $h \in \mathcal{H}_m$, it is immediate that $\mb E W_{i}(h) = \var \l(\l(h^{(j)} -h_0^{(j)}\r)(X_1,\ldots,X_j)\r)$. Recall that
    \begin{align*}
        h(X_1, X_2, \ldots, X_m) &= (\delta_{x_1} - P + P) \times (\delta_{x_2} - P + P) \times \ldots \times (\delta_{X_m} - P + P)h \\
        &= \sum_{q=1}^m \sum_{J \in \mathcal{I}^m_q} h^{(q)}(X_i, i \in J) \,,
    \end{align*}
    and all the terms in the sum are mutually orthogonal \citep[][Theorem 3 of Chapter 1.6]{lee2019u}. It implies that $\var (h(X_1,\ldots,X_m)) = \sum_{j=1}^m {m \choose j}\var\l(h^{(j)}\r)$, thus
    \begin{align}\label{eq:var-decomposition}
        \sup_{h \in \mathcal{H}_m} \mb E W_{i}(h)\leq C\sup_{h \in \mathcal{H}_m} \frac{\var(h(X_1,\ldots,X_m))}{{m \choose j}}\,.
    \end{align}
    Next, we are going to estimate $\mb E \sup_{h \in \mathcal{H}_m} \l\lvert \frac{\sum_{i=1}^{\lfloor N/j \rfloor} W_{i}(h) - \mb E W_{i}(h)}{\lfloor N/j \rfloor} \r\rvert$. In view of the the symmetrization inequality \citep{wellner2013weak},
    \begin{align*}
        \mb E \sup_{h \in \mathcal{H}_m} \l\lvert \frac{\sum_{i=1}^{\lfloor N/j \rfloor} W_{i}(h) - \mb E W_{i}(h)}{\lfloor N/j \rfloor} \r\rvert \leq 2 \mb E \sup_{h \in \mathcal{H}_m} \frac{1}{\lfloor N/j \rfloor} \l\lvert \sum_{i = 1}^{\lfloor N/j \rfloor} \varepsilon_i W_{i}(h)\r\rvert \,,
    \end{align*}
    where $\varepsilon_1, \varepsilon_2, \ldots, \varepsilon_{\lfloor N/j \rfloor}$ are i.i.d. Rademacher random variables. Recall that $W_{i}(h) = \l(\l(h^{(j)} -h_0^{(j)}\r)(X_{(i-1)j + 1}, X_{(i-1)j + 2}, \ldots, X_{ij})\r)^2$, in view of the fact that $\|h^{(j)}\|_\infty\leq 2^j$,  $\l\lvert W_{i}(h)\r\rvert \leq 4^{j+1}$. Since the function $x \mapsto x^2$ is Lipschitz continuous on $[-4^j, 4^j]$ with Lipschitz constant $4^{j+2}$, Talagrand's contraction inequality implies that
    \begin{multline*}
        \mb E_\varepsilon \sup_{h \in \mathcal{H}_m} \frac{1}{\lfloor N/j \rfloor} \l\lvert \sum_{i = 1}^{\lfloor N/j \rfloor} \varepsilon_i W_{i}(h)\r\rvert 
        \\
        \leq C^j \mb E_\varepsilon \sup_{h \in \mathcal{H}_m} \frac{1}{\lfloor N/j \rfloor} \l\lvert \sum_{i = 1}^{\lfloor N/j \rfloor} \varepsilon_i h \l(X_{(i-1)j + 1}, \ldots , X_{ij}\r)\r\rvert.
    \end{multline*}
    Next, in view of the definition \eqref{eq:d_Nj} of $d^{(2)}_{N,j}(h_1, h_2)$, generic chaining bound for sub-Gaussian processes \citep{talagrand2014upper} applied conditionally on $X_1,\ldots,X_n$ implies that 
    \[
    \mb E_\varepsilon \sup_{h \in \mathcal{H}_m} \frac{1}{\lfloor N/j \rfloor} \l\lvert \sum_{i = 1}^{\lfloor N/j \rfloor} \varepsilon_i W_{i}(h)\r\rvert
    \leq
    \frac{C^j}{\sqrt{\lfloor N/j \rfloor}} \mb E \gamma_2 \l(\mathcal{H}_m, d_{N,j}^{(2)}\r).
    \]

\subsection{Proof of Lemma \ref{lemma:1}}
\label{proof:lemma:1}

The upper bound for $\mb E\gamma_{2} \l(\mathcal{H}_m, d^{(2)}_{N,j}\r)$ follows along the lines of the proof of Theorem 3.12 in \cite{koltchinskii2011oracle} that applies to empirical processes. In particular, the upper bound is driven by the exponent $V$ and the diameter of $\mathcal{H}_m$ measured with respect to the distance $d^{(2)}_{N,j}$. The latter quantity is at most $2\sup_{h \in \mathcal{H}_m}\sqrt{\var\l(h^{(j)}\r)}$ which is bounded by $\frac{2\Sigma_{\m H_m}}{{m \choose j}^{1/2}}$, as explained in section \ref{proof:lemma:E_sup_sum_W}. 
Estimation of $\l(\mb E \l(\gamma_{2/j, \lfloor \log_2 (pj) \rfloor} (\mathcal{H}_m, d_{N,j}^{(1)}) \r)^p\r)^{1/p}$ is slightly more involved. To get the desired bounds, we will consider the small-$p$ and large-$p$ scenarios separately.

First, we consider the term $\mb E \gamma_2(\mathcal{H}_m, d_{N,j})$. Dudley's entropy integral estimate \citep{talagrand2014upper} implies that
\begin{align*}
    \gamma_{2} \l(\mathcal{H}_m, d^{(2)}_{N,j}\r)\leq C \int_0^{D^{(2)}_{N,j}} \l(\log_+ N(\mathcal{H}_m, d^{(2)}_{N,j}, \varepsilon) \r)^{1/2} d\varepsilon \,,
\end{align*}
where $D^{(2)}_{N,j} = \sup_{h_1, h_2 \in \mathcal{H}_m} d_{N,j}^{(2)}(h_1, h_2)$. Recall that $h^{(j)}(X_1, X_2, \ldots, X_j) = (\delta_{X_1} - P) \times \ldots \times (\delta_{X_j} - P) \times P^{m-j}h$ and note that
\begin{multline}\label{eq:decompose_h^j_distance}
    \l(\l(h_1^{(j)} - h_2^{(j)}\r)(X_1, \ldots, X_j)\r)^2 
    \\=  \l( \sum_{r=0}^j (-1)^{j-r}\sum_{J \in \mathcal{I}^j_r}  P^{m-r}(h_1 - h_2)(X_i, i \in J)\r)^2 \\
    \leq   (j+1) \sum_{r=0}^j  {j \choose r} \sum_{J \in \mathcal{I}^j_r} P^{m-r}(h_1 - h_2)^2(X_i, i \in J)\,,
\end{multline}
where in the case $r=0$, we define $\sum_{J \in \mathcal{I}^j_0} P^m (h_1 - h_2)(X_i, i \in J) \equiv P^m(h_1 - h_2)$ and $\sum_{J \in \mathcal{I}^j_0} P^m (h_1 - h_2)^2(X_i, i \in J) \equiv P^m(h_1 - h_2)^2$. Therefore,
\begin{multline*}
    \frac{1}{\lfloor N/j \rfloor} \sum_{i=1}^{\lfloor N/j \rfloor} \l(\l(h_1^{(j)} - h_2^{(j)}\r)\l(X_{(i-1)j + 1}, \ldots,X_{ij}\r)\r)^2 \\ 
    \leq \sum_{r=0}^j \frac{(j+1){j \choose r}}{\lfloor N/j \rfloor}\sum_{i=1}^{\lfloor N/j \rfloor}\sum_{(k^i_1, \ldots, k^i_r)} P^{m-r}(h_1-h_2)^2(X_{k^i_1}, \ldots, X_{k^i_r}) \,,
\end{multline*}
where the last summation is taken over all 
tuples $(k^i_1, \ldots, k^i_r) \subseteq \{(i-1)j + 1, \ldots, ij\}$. Here, the convention is that for $r = 0$, the term 
\[
\sum_{i=1}^{\lfloor N/j \rfloor}\sum_{(k^i_1, \ldots, k^i_r)} P^{m-r}(h_1-h_2)^2(X_{k^i_1}, \ldots, X_{k^i_r})
\]
equals $P^{m}(h_1-h_2)^2$. Since ${j \choose r} \leq 2^j$ for all $0 \leq r \leq j$,
\begin{multline}\label{eq:decompose_h^j_distance-2}
    \frac{1}{\lfloor N/j \rfloor} \sum_{i=1}^{\lfloor N/j \rfloor} \l(\l(h_1^{(j)} - h_2^{(j)}\r)\l(X_{(i-1)j + 1}, \ldots,X_{ij}\r)\r)^2 \\
    \leq 4^j(j+1)^2 \frac{\sum_{r=0}^j \sum_{i=1}^{\lfloor N/j \rfloor}\sum_{(k^i_1, \ldots, k^i_r)} P^{m-r}(h_1-h_2)^2(X_{k^i_1}, \ldots, X_{k^i_r})}{\lfloor N/j \rfloor (j+1) {j \choose r}}\,.
\end{multline}
The fraction above can be viewed as the $L_2(Q_{N,j})$ distance between $h_1$ and $h_2$ where
the probability measure $Q_{N,j}$ is defined by
\begin{align*}
    Q_{N,j} = \frac{\sum_{r=0}^j \sum_{i=1}^{\lfloor N/j \rfloor} U_{N,i}^{(r)} \times P^{m-r}}{\lfloor N/j \rfloor (j+1) }\,.
\end{align*}
Here, $U_{N, i}^{(r)} \times P^{m-r} = \frac{1}{{j \choose r}} \sum_{(k^i_1, \ldots, k^i_r)} \delta_{X_{k^i_1}, \ldots, X_{k^i_r}} \times P^{m-r}$ for all $i \in \l[\lfloor N/j \rfloor\r]$. Thus
\begin{multline*}
    \int_0^{D^{(2)}_{N,j}} \l(\log_+ N(\mathcal{H}_m, d^{(2)}_{N,j}, \varepsilon) \r)^{1/2} d\varepsilon \\
    \leq C \int_0^{D^{(2)}_{N,j}} \l(\log_+ N \l(\mathcal{H}_m, \|\cdot\|_{L_2(Q_{N,j})}, \frac{\varepsilon}{2^j(j+1)} \r) \r)^{1/2} d\varepsilon  \\
    \leq 2^j(j+1)  \int_0^{D^{(2)}_{N,j}/(2^j(j+1))} \l(\log_+ N \l(\mathcal{H}_m, \|\cdot\|_{L_2(Q_{N,j})}, \varepsilon  \r) \r)^{1/2} d\varepsilon \\
    \leq CV^{1/2}2^j(j+1) \int _0^{D^{(2)}_{N,j}/(2^j(j+1))} \l(\log \l(\frac{C'}{\varepsilon}\r)\r)^{1/2} d\varepsilon
\end{multline*}
where the last inequality is implied by the definition \eqref{eq:euclidean}. 
Therefore,
\begin{align*}
    \mb E \gamma_2(\mathcal{H}_m, d^{(2)}_{N,j}) \leq C^{j}V^{1/2} \mb E \l[\int _0^{ D^{(2)}_{N,j}/(2^j(j+1))} \l(\log \l(\frac{C'}{\varepsilon}\r)\r)^{1/2} d\varepsilon \r]
\end{align*}
Remainder of the proof follows the argument behind Theorem 3.12 in \cite{koltchinskii2011oracle}. We only mention the necessary modifications. 
Note that
\begin{multline*}
    \mb E \l[D_{N,j}^{(2)}\r]^2 = \mb E \sup_{h_1, h_2 \in \mathcal{H}_m}  \frac{1}{\lfloor N/j \rfloor} \sum_{i=1}^{\lfloor N/j \rfloor} \l(\l(h_1^{(j)} - h_2^{(j)}\r)\l(X_{(i-1)j + 1}, \ldots,X_{ij}\r)\r)^2 \\
    \leq 2 \mb E \sup_{h \in \mathcal{H}_m}  \frac{1}{\lfloor N/j \rfloor} \sum_{i=1}^{\lfloor N/j \rfloor} \l(h^{(j)}\l(X_{(i-1)j + 1}, \ldots,X_{ij}\r)\r)^2\,.
\end{multline*}
Lemma \ref{lemma:E_sup_sum_W} implies that
\begin{align*}
    \mb E \l[D_{N,j}^{(2)}\r]^2 \leq  \frac{C_1^{j}}{\lfloor N/j \rfloor^{1/2}} \mb E \gamma_2 (\mathcal{H}_m, d^{(2)}_{N,j}) + \frac{\Sigma_{\m H_m}^2}{{m \choose j}}\,.
\end{align*}
To this end, straightforward application of the aforementioned techniques from \citep{koltchinskii2011oracle} yield that 
\begin{align*}
    \mb E \gamma_{2} \l(\mathcal{H}_m, d^{(2)}_{N,j}\r) = \frac{C^j_1 V^{1/2}\Sigma_{\m H_m}}{{m \choose j}^{1/2}} \log^{1/2} \l(\frac{C_1'}{\Sigma_{\m H_m}^2}{m \choose j}\r) + \frac{C^j_2V}{\lfloor N/j \rfloor^{1/2}} \log \l(\frac{C_1'}{\Sigma_{\m H_m}^2}{m \choose j}\r)\,.
\end{align*}
Finally, the estimates $\l(\frac{m}{j}\r)^j \leq {m \choose j} \leq \l(\frac{em}{j}\r)^j$ yield the bound \eqref{eq:E_gamma_2}. Note that in the display above, instead of $\Sigma_{\m H_m}$ one can use its upper bound that equals $1$ by assumption.

Next, let us prove inequality \eqref{eq:truncated_gamma_bound_1}. Proposition 1.2.1 in \citep{talagrand2005generic} implies that
\begin{align*}
    \gamma_{2/j} (\mathcal{H}_m, d^{(1)}_{N,j})\leq C(\log 2)^{-j/2}(1- 1/2^{j/2})^{-1} \int_0^{D_{N,j}^{(1)}} \l(\log_+ N(\mathcal{H}_m, d^{(1)}_{N,j}, \varepsilon) \r)^{j/2} d\varepsilon,
\end{align*}
where $D_{N,j}^{(1)} = \sup_{h_1, h_2 \in \mathcal{H}_m} d_N^{(1)}(h_1, h_2)$. Repeating the reasoning behind inequalities \eqref{eq:decompose_h^j_distance} to \eqref{eq:decompose_h^j_distance-2}, we deduce that
\begin{multline*}
    e^2\frac{1}{{N \choose j}}\sum_{J \in \mathcal{I}^N_j} \l(h^{(j)}_1 \l(X_i, i \in J\r) - h^{(j)}_2 \l(X_i, i \in J\r)\r)^2 \\
    \leq 4^{j}e^2(j+1)^2 \frac{\sum_{r=0}^j \sum_{J \in \mathcal{I}^N_r} P^{m-r}(h_1 - h_2)^2(X_i, i \in J) }{(j+1){N \choose r}}\,,
\end{multline*}
where for $r=0$,
\(\sum_{J \in \mathcal{I}^j_0} P^m (h_1 - h_2)(X_i, i \in J) \equiv P^m(h_1 - h_2) \) and \(\sum_{J \in \mathcal{I}^j_0} P^m (h_1 - h_2)^2(X_i, i \in J) \equiv P^m(h_1 - h_2)^2 \). Define the probability measure 
\begin{align}\label{eq:u_N}
    Q_N = \frac{1}{j+1} \sum_{r=0} ^ j U^{(r)}_N \times P^{m-r}.
\end{align}
Here, for every $r \in [m]$, $U^{(r)}_N \times P^{m-r}$ denotes a random measure
\begin{align*}
    U^{(r)}_N \times P^{m-r} = \l(\frac{1}{{N \choose r}}\sum_{[i_1, i_2, \ldots, i_r] \in \mathcal{I}_r^N }\delta_{(X_{i_1}, X_{i_2}, \ldots, X_{i_r})} \r) \times P^{m-r}
\end{align*}
defined on $\mb R^m$ and $U^{(0)}_N \times P^m := P^m$. Thus, $\frac{\sum_{r=0}^j \sum_{J \in \mathcal{I}^N_r} P^{m-r}(h_1 - h_2)^2(X_i, i \in J) }{(j+1){N \choose r}}$ equals $d^2_u (h_1, h_2) := \|h_1 - h_2\|^2_{L_2(Q_N)}$. Therefore, whenever $d_u(h_1, h_2) \leq \frac{\varepsilon}{2^{j}e(j+1)}$, we have that $d^{(1)}_{N,j}(h_1, h_2) \leq \varepsilon$. We deduce that
\begin{multline*}
    \int_0^{D_{N,j}^{(1)}} \l(\log_+ N(\mathcal{H}_m, d^{(1)}_{N,j}, \varepsilon) \r)^{j/2} d\varepsilon \\
    \leq \int_0^{D_{N,j}^{(1)}} \l(\log_+ N\l(\mathcal{H}_m, d_u, \frac{\varepsilon}{2^je(j+1)}\r) \r)^{j/2} d\varepsilon \\
    \leq 2^j e(j+1) \int_0^{\frac{D_{N,j}^{(1)}}{2^je(j+1)}} \l(\log_+ N\l(\mathcal{H}_m, d_u, \varepsilon\r) \r)^{j/2} d\varepsilon \,.
\end{multline*}
In view of the definition of Euclidean classes \eqref{eq:euclidean} and the fact that functions in $\mathcal{H}_m$ are uniformly bounded by $1$, 
\begin{equation*}
    \int_0^{\frac{D_{N,j}^{(1)}}{2^je(j+1)}} \l(\log_+ N\l(\mathcal{H}_m, d_u, \varepsilon \r) \r)^{j/2} d\varepsilon 
    \\
    \leq (C_1V)^{j/2} \int_0^{\frac{D_{N,j}^{(1)}}{2^je(j+1)}} \l(\log \l(\frac{C_2}{\varepsilon}\r) \r)^{j/2} d\varepsilon
\end{equation*}
For all $j \geq 2$, we have that 
\begin{multline*}
    \l(\log \l(\frac{C_2 }{\varepsilon}\r) \r)^{j/2} \\
    \leq 2^{j/2-1}\l(\log \l(\frac{C_2 }{\frac{D_{N,j}^{(1)}}{2^je(j+1)}}\r) \r)^{j/2} + 2^{j/2-1}\l(\log \l(\frac{\frac{D_{N,j}^{(1)}}{2^je(j+1)}}{\varepsilon}\r) \r)^{j/2}\,.
\end{multline*}
Since
\begin{multline*}
    \int_0^{\frac{D_{N,j}^{(1)}}{2^je(j+1)}} \l(\log \l(\frac{\frac{D_{N,j}^{(1)}}{2^je(j+1)}}{\varepsilon}\r) \r)^{j/2} d\varepsilon \\
    = \frac{D_{N,j}^{(1)}}{2^je(j+1)} \int_0^1 \l(\log(1/\varepsilon)\r)^{j/2} d\varepsilon \leq C^jj^{j/2} D_{N,j}^{(1)}\,
\end{multline*}
we deduce that 
\begin{multline*}
\mb E \l[ \gamma_{2/j} (\mathcal{H}_m, d_N) \r]^p \leq \l(C_1 Vj\r)^{pj/2} \mb E \l(D_{N,j}^{(1)}\r)^p 
\\
+ (C_2 V)^{pj/2} \mb E \l[D_{N,j}^{(1)} \l(\log\l(\frac{C_3^j}{D_{N,j}^{(1)}}\r)\r)^{j/2}\r]^p\,.
\end{multline*}
Next, let us estimate $\mb E \l(D_{N,j}^{(1)}\r)^p$. Recall that 
\begin{multline*}
    \l(D_N^{(1)}\r)^2 = e^2\sup_{h_1, h_2 \in \mathcal{H}_m}  \frac{1}{{N \choose j}}\sum_{J \in \mathcal{I}^N_j} \l(h^{(j)}_1 \l(X_i, i \in J\r) - h^{(j)}_2 \l(X_i, i \in J\r)\r)^2 \\
    \leq 2e^2\sup_{h \in \mathcal{H}_m}  \frac{1}{{N \choose j}}\sum_{J \in \mathcal{I}^N_j} \l(h^{(j)} \l(X_i, i \in J\r)\r)^2 \,,
\end{multline*}
hence
\begin{align*}
    \mb E \l(D_{N,j}^{(1)}\r)^p \leq C^p \mb E \l[\sup_{h \in \mathcal{H}_m}  \frac{1}{{N \choose j}}\sum_{J \in \mathcal{I}^N_j} \l(h^{(j)} \l(X_i, i \in J\r)\r)^2\r]^{p/2}\,.
\end{align*}
The bound for this term was provided in display \eqref{eq:h_j_square} in the proof of Theorem \ref{thm: thm2}. That is,
\begin{multline*}
\mb E \sup_{h \in \mathcal{H}_m} \l(\frac{1}{{N \choose j}} \sum_{J \in \mathcal{I}^N_j} \l(h^{(j)}\l(X_i, i \in J\r)\r)^2\r)^{p/2} \leq 
        C_1^{p}\l(\frac{j}{m}\r)^{pj/2} \Sigma_{\m H_m}^{p}
        \\
        + C_2^{pj} \l(\frac{j}{N}\r)^{p/4} \l( \l(p\sqrt{\frac{j}{N}}\r)^{p/2} + \l(\mb E \gamma_2 (\mathcal{H}_m, d^{(2)}_{N,j})\r)^{p/2}\r).  
\end{multline*}
The upper bound $\Gamma(j,m,N, V)$ for $\mb E \gamma_2 \l(\mathcal{H}_m, d^{(2)}_{N,j}\r)$ was already given in the first part of this proof and is summarized in display \eqref{eq:E_gamma_2}. It yields an estimate
\begin{multline*}
    \mb E \sup_{h \in \mathcal{H}_m} \l(\frac{1}{{N \choose j}} \sum_{J \in \mathcal{I}^N_j} \l(h^{(j)}\l(X_i, i \in J\r)\r)^2\r)^{p/2} \leq 
        C_1^{p}\l(\frac{j}{m}\r)^{pj/2} \Sigma_{\m H_m}^{p}
        \\
        + C_2^{pj} \l[\sqrt{\frac{j}{N}}\l( p\sqrt{\frac{j}{N}}\bigvee \Gamma(j,m,N, V)\r)\r]^{p/2}. 
\end{multline*}
Observe that in view of the inequality $ab\leq (a^2+b^2)/2$, 
\[
\Gamma(j,m,N,V)\sqrt{\frac{j}{N}}\leq  V\frac{j}{N}\log\l(\frac{C_1'}{\Sigma_{\m H_m}^2}{m \choose j}\r)\bigvee \Sigma^2_{\m H_m}\l(\frac{j}{m}\r)^{j}.
\]
Recalling that
\[
D(p,j):=D(p,j,m,N) = \l[\frac{pj}{N}\bigvee \frac{Vj}{N}\log\l(\frac{C_1'}{\Sigma_{\m H_m}^2}{m \choose j}\r)\bigvee \Sigma^2_{\m H_m}\l(\frac{j}{m}\r)^{j} \r]^{1/2},
\]
we see that $\mb E\l(D_{N,j}^{(1)}\r)^p \leq (D(p,j))^p$.
Now we will find an upper bound for 
\[
\mb E \l[D_{N,j}^{(1)} \l(\log\l(\frac{C_3^j}{D_{N,j}^{(1)}}\r)\r)^{j/2}\r]^p.
\]
Elementary calculus implies that for any $c > 0$, the function $x \mapsto x \l(\log\l(\frac{c}{x}\r)\r)^{j/2}$ is increasing when $0 < x \leq ce^{-j/2}$. Assuming without loss of generality that $C_3>eC$ where the constant $C$ appears in the definition of $D(p,j)$, we deduce that whenever $D_{N,j}^{(1)} \leq D(2)\wedge C^j$,
\begin{equation*}
     D_{N,j}^{(1)} \l(\log\l(\frac{C_3^j }{D_{N,j}^{(1)}}\r)\r)^{j/2}  \leq 
     D(2) \l(\log\l(\frac{C_3^j}{D(2)\wedge C^j}\r)\r)^{j/2},
\end{equation*}
hence
\begin{multline*}
    \mb E \l[D_{N,j}^{(1)} \l(\log\l(\frac{C_3^j }{D_{N,j}^{(1)}}\r)\r)^{j/2}\r]^p \\
    \leq (D(2))^p \l(\log\l(\frac{C_3^j}{D(2)\wedge C^j}\r)\r)^{pj/2}\mb E I\l\{D_{N,j}^{(1)} < D(2)\r\} \\
    + \mb E\l(D_{N,j}^{(1)} \r)^p \l(\log\l(\frac{C_3^j}{D(2)\wedge C^j}\r)\r)^{pj/2} \\
    \leq 2(D(p,j))^p \l(\log\l(\frac{C_3^j}{D(2)\wedge C^j}\r)\r)^{pj/2}.
\end{multline*}
Collecting all the estimates, we get the inequality
\begin{equation*}
    \l(\mb E \l(\gamma_{2/j} (\mathcal{H}_m, d^{(1)}_{N,j}) \r)^p\r)^{1/p} 
    \leq C^{j} V^{j/2} D(p,j) \l(\log\l(\frac{C_3^j}{D(2)\wedge C^j}\r)\bigvee j\r)^{j/2}.
\end{equation*}
Since $D(2)\geq \Sigma_{\m H_m}\l( \frac{Cj}{m}\r)^{j/2}$, we can replace $D(2)$ by its lower bound in the display above to get the bound \eqref{eq:truncated_gamma_bound_1}. 
Finally, we turn our attention to the inequality \eqref{eq:truncated_gamma_bound_2}. For any pseudometric $d$ on $\mathcal{H}_m$, define
\begin{align*}
    D_l(d) = \inf \l\{\varepsilon : N\l(\mathcal{H}_m, d, \varepsilon\r) \leq 2^{2^l}\r\}.
\end{align*}
Let $M > 0$ be arbitrary. If two pseudometrics $d$ and $d'$ satisfy $N\l(\mathcal{H}_m, d, \varepsilon\r) \leq N\l(\mathcal{H}_m, d', M\varepsilon\r)$, then
\begin{multline*}
    \inf \l\{\varepsilon : N\l(\mathcal{H}_m, d, \varepsilon\r) \leq 2^{2^l}\r\} \leq \inf \l\{\varepsilon : N\l(\mathcal{H}_m, d', M\varepsilon\r) \leq 2^{2^l}\r\} \\
    = \frac{1}{M}\inf \l\{M\varepsilon : N\l(\mathcal{H}_m, d', M\varepsilon\r) \leq 2^{2^l}\r\}\,.
\end{multline*}
Thus, $D_l(d) \leq M^{-1}D_l(d')$. For any $l\in N$, truncated chaining complexity $\gamma_{2/j, l} (\mathcal{H}_m, d_N)$ admits a bound
\begin{align*}
    \gamma_{2/j, l} (\mathcal{H}_m, d_N)\leq C(\log 2)^{-j/2}(1- 1/2^{j/2})^{-1} \int_0^{D_{l}(d_N)} \l(\log_+ N(\mathcal{H}_m, d_N, \varepsilon) \r)^{j/2} d\varepsilon\,.
\end{align*}
Let $d_u(h_1,h_2)=\|h_1-h_2\|_{L_2(Q_N)}$ where $Q_N$ is the probability measure defined in \eqref{eq:u_N}. Then
\begin{multline*}
    \int_0^{D_l(d_N)} \l(\log_+ N(\mathcal{H}_m, d_N, \varepsilon) \r)^{j/2} d\varepsilon \\
    \leq \int_0^{2^je(j+1)D_l(d_u)} \l(\log_+ N\l(\mathcal{H}_m, d_u, \frac{\varepsilon}{2^{j}e(j+1)}\r)\r)^{j/2} d\varepsilon \\
    = 2^{j}e(j+1) \int_0^{D_l(d_u)} \l(\log_+ N\l(\mathcal{H}_m, d_u, \varepsilon \r)\r)^{j/2} d\varepsilon\,.
\end{multline*}
Since $\m H_m$ is Euclidean, 
\begin{multline*}
    D_l(d_u) = \inf\l\{\varepsilon : N\l(\mathcal{H}_m, d_u, \varepsilon\r) \leq 2^{2^l}\r\} \\
    \leq \inf\l\{\varepsilon : \l(\frac{C'}{\varepsilon}\r)^{V} \leq 2^{2^l}\r\} = C' 2^{-\frac{2^l}{V}}\,.
\end{multline*}
We deduce that
\begin{multline*}
    \int_0^{D_l(d_u)} \l(\log_+ N\l(\mathcal{H}_m, d_u, \varepsilon \r)\r)^{j/2} d\varepsilon 
    \\
    \leq  C^{j/2}V^{j/2}\int_0^{C' 2^{-\frac{2^l}{V}}} \l(\log_+\l(\frac{C'}{\varepsilon}\r)\r)^{j/2} d\varepsilon \\
    = C^{j/2}V^{j/2} \int_{\frac{2^l \log 2}{V}}^\infty t^{j/2}e^{-t} dt\,.
\end{multline*}
Lemma \ref{lemma:incomplete-gamma-bound} implies that the integral factor is at most $C^j 2^{lj/2} 2^{-2^l/V}$ whenever $2^{l+1} > \frac{Vj}{\log 2}$. Taking $l = \l\lfloor\log_2 p + \log_2 j \r\rfloor$ yields the inequality
\begin{align*}
    \gamma_{2/j, l} (\mathcal{H}_m, d_N)\leq 
    C^j (pj)^{j/2} e^{-pj/V}.
\end{align*}
if $p > \frac{V}{\log 2}$.

\subsection{Proof of Lemma \ref{lemma:prelim}}
\label{proof:lemma:prelim}

We only give a sketch of the proof and skip standard computations. Consider the U-process
\(
F_{N,m}(u)=\frac{1}{{N \choose m}} \sum_{J \in \mathcal{I}^N_m} I\left\{ \dotp{\bar X_J}{u}\geq 0\right\}, \ \|u\|=1,
\)
and recall that it admits the following \emph{Hoeffding's representation} \citep{hoeffding1994probability}: 
    \begin{align*}
        F_{N,m}(u) = \frac{1}{N!} \sum_\pi w_{\pi}(u)\,,
    \end{align*}
    where the sum is taken over all permutations $\pi: [N] \mapsto [N]$, and
    \begin{align*}
        w_{\pi}(u) = \frac{1}{\l\lfloor N/m \r\rfloor}\sum_{i=0}^{\l\lfloor N/m \r\rfloor-1} I\left\{ \dotp{\frac{1}{m}\sum_{j=1}^m X_{\pi(im+j)}}{u}\geq 0\right\}.
     \end{align*}
In other words, it represents $F_{N,m}(u)$ as an average of averages of independent random variables. It is well known that this representation offers a simple reduction from the suprema of U-processes to the suprema of empirical processes. Indeed, it follows from Jensen's inequality that for any convex function $\psi$, 
\[
\mb E\psi\l(\sup_{\|u\|=1}(F_{N,m}(u)-\mb EF_{N,m}(u))\r)\leq \mb E \psi\l(\sup_{\|u\|=1}(w_{\mathrm{id}}(u)-\mb Ew_{\mathrm{id}}(u))\r)
\]
where $\mathrm{id}$ is the identity permutation. In particular, this is true for the moment generating function, hence standard concentration inequalities such as the one given in Lemma \ref{lemma:talagrand-bousquet} can be used to control $\sup_{\|u\|=1 }\l| F_{N,m}(u)-\mb EF_{N,m}(u) \r|$. 
In particular, it can be shown (for instance, see \citep[Theorem 2.1]{chen2018robust}) that 
\[
\sup_{\|u\|=1} \psi\l(w_{\mathrm{id}}(u)-\mb Ew_{\mathrm{id}}(u)\r)\leq C\sqrt{m\frac{\max(d,t)}{N}}
\]
with probability at least $1-e^{-t}$, and the previous discussion implies that the same result is true for $\sup_{\|u\|=1}(F_{N,m}(u)-\mb EF_{N,m}(u))$. From here, it is standard to conclude, following the same argument as Theorem \ref{th:tukey-nonasymp} that $D_{m}(\wh\mu_{N,m})\geq \frac{1}{2} - C\sqrt{\frac{\max(d,t)}{N}}$, whence (see Lemma \ref{lemma:depth}))
\[
\l\|\wh\mu_{N,m}\r\|\leq C'\sqrt{\frac{\max(d,t)}{N}}
\]
with probability at least $1-e^{-t}$.

\subsection{Proof of Lemma \ref{lemma:cont-modulus}}
\label{section:proof:cont-modulus}

Define $U_{N,m}(u,z) =  \frac{1}{{N\choose m}} \sum_{J\in \m I_{m}^{(N)} }I\left\{ \dotp{\bar Z_J-z}{u}\geq 0\right\} - \Phi_m\l( H(z,u)\r)$ and assume that $\m E$ is the event of probability at least $1-e^{-t}$ on which $\l\|\wh\mu_{N,m}\r\|\leq \bar\delta:=C\sqrt{\frac{\max(d,t)}{N}}$. Note that on event $\m E$, 
\[
\l|\tilde R_{N,m}-R_{N,m}\r| \leq \sup_{\|u\|=1,\|z\|\leq \bar \delta}\l| U_{N,m}(u,z) - U_{N,m}(u,0) \r|.
\]
Next, we proceed in a way similar to the proof of Lemma \ref{lemma:prelim}. Namely, Hoeffding's representation implies that
\[
U_{N,m}(u,z) =  \frac{1}{N!} \sum_\pi w_{\pi}(u,z),
\]
where the sum is taken over all permutations $\pi: [N] \mapsto [N]$, and
\begin{align*}
        w_{\pi}(u,z) = \frac{1}{\l\lfloor N/m \r\rfloor}\sum_{i=0}^{\l\lfloor N/m \r\rfloor-1} I\left\{ \dotp{\frac{1}{m}\sum_{j=1}^m X_{\pi(im+j)} - z}{u}\geq 0\right\} - \Phi_m\l( H(z,u)\r).
\end{align*}
Using the same reasoning as Lemma \ref{lemma:prelim}, we claim that it suffices to find an upper bound for 
\[
\sup_{\|u\|=1,\|z\|\leq \bar \delta}\l| w_{\mathrm{id}}(u,z) - w_{\mathrm{id}}(u,0)\r|
\]
where $\mathrm{id}$ stands for the identity permutation $\pi(j)=j, \ j=1,\ldots,N$. 
The latter quantity is the modulus of continuity of the empirical process indexed by half-spaces, and it is standard to estimate it using Lemma \ref{lemma:talagrand-bousquet}. For example, Lemma 3.1 in \citep[][]{minsker2024improved} states that there exist absolute constants $C_1,C_2>0$ such that 
    \begin{multline*}
        \sup\limits_{\|u\|=1,\|z\|\leq \bar \delta} \l| w_{\mathrm{id}}(u,z) - w_{\mathrm{id}}(u,0)\r|
        \\
        \leq C_1\l( \sqrt{\bar\delta}\l(\sqrt{\frac{tm}{N}}+\sqrt{\frac{dm}{N}}\log^{1/2}\l( \frac{C_2}{\bar\delta}\r)\r) + \frac{m}{N}\l( t+d\log\l( \frac{C_2}{\bar\delta}\r)\r)\r)
    \end{multline*}
with probability at least $1-e^{-t}$. Observe that 
\begin{multline*}
C_1\frac{m}{N}\l( t+d\log\l( \frac{C_2}{\bar\delta}\r)\r) \leq 
C_3 \frac{\max(d,t)m}{N} \log\l(\frac{C_2N }{\max(d,t)}\r)
\\
= C_3\sqrt{\frac{\max(d,t)m}{N}} \frac{1}{\sqrt{m}} \sqrt{\frac{\max(d,t)m^2}{N} }\l(\log\l(\frac{C_2N }{\max(d,t)m^2}\r)+ \log(m^2)\r)
\\
\leq C_4 \sqrt{\frac{\max(d,t)m}{N}}\frac{\log(m)}{\sqrt m}
\end{multline*}
since the function $z\mapsto \sqrt{z}\log(1/z)$ is increasing for $0<z\leq e^{-2}$ and $\frac{\max(d,t)m^2}{N}\leq c$ by assumption. Similarly, since $ab\leq (a^2+b^2)/2$,
\begin{multline*}
C_1 \sqrt{\bar\delta}\l(\sqrt{\frac{tm}{N}}+\sqrt{\frac{dm}{N}}\log^{1/2}\l( \frac{C_2}{\bar\delta}\r)\r) 
\\
\leq C_3 \l( \bar\delta + \frac{m}{N}\l( t+d\log\l( \frac{C_2}{\bar\delta}\r)\r)\r)
\\
\leq C_4\sqrt{\frac{\max(d,t)m}{N}}\l( \frac{1}{\sqrt m} + \frac{\log(m)}{\sqrt m} \r).
\end{multline*}
Combination of the displays above implies that
\begin{equation*}
        \sup\limits_{\|u\|=1,\|z\|\leq \bar\delta} \l| w_{\mathrm{id}}(u,z) - w_{\mathrm{id}}(u,0)\r| \\
        \leq C\sqrt{\frac{\max(d,t)m}{N}}\l( \frac{1}{\sqrt m} + \frac{\log(m)}{\sqrt m} \r)
\end{equation*}
with probability at least $1-2e^{-t}$, as claimed.

\subsection{Proof of Lemma \ref{lemma:hat-J}}
\label{proof:hat-J}

Define 
\[
    \zeta(x) = \begin{cases}
        0 & x\leq 1/2,\\
        1 & x\geq 1, \\
        2x - 1 & x\in (1/2, 1).
    \end{cases}
\]
Moreover, set
\[
    Y_k(u) := \frac{|\langle X_k, u\rangle|}{m^{1/4}}\quad\text{and}\quad S=\sup_{\|u\| = 1}\left(\sum_{k=1}^N \zeta(Y_k(u)) - \mb{E} \zeta(Y_k(u))\right). 
\]
By the definition of $J(u)$, for any $u\in \mb{R}^d$,
\begin{multline*}
        |J(u)|  = \sum_{k=1}^N I\left\{\langle X_k, u\rangle^2 \geq m^{1/2}\right\} 
        = \sum_{k=1}^N I\left\{Y_k(u) \geq 1\right\} \leq \sum_{k=1}^N \zeta\left(Y_k(u)\right),
\end{multline*}
Note that $\zeta$ is Lipschitz continuous with Lipschitz constant equal to $2$ and that $I\{x\geq 1/2\}\geq \zeta(x) \geq I\{x\geq 1\}$. In view of Markov's inequality and assumption stating that $M_4^4:=\mb E \dotp{X}{u}^4<\infty$,
\begin{equation}
\label{rhoexpectation}
    \mb{E} \zeta\big(Y_k(u)\big) \leq \mb{P}\left(\langle X_k, u\rangle^2 \geq m^{1/2}\right)\leq \frac{\mb{E}\langle X,u\rangle^4}{m^{1/2}} = \frac{M_4^4}{m^{1/2}}.
\end{equation}
We deduce that 
\begin{multline*}
    \mb{P}\left( \sup_{\|u\| = 1}|J(u)| > c'N\right)
    \leq \mb{P} \left(\sup_{\|u\|=1}\sum_{k=1}^N \zeta(Y_k(u))>c'N\right)
    \\
    \leq \mb{P} \Bigg(S>c'N - M_4^4 \frac{N}{\sqrt m}\Bigg).
\end{multline*}
We will now estimate the right-hand side in the display above using Bousquet's version of Talagrand's inequality (Lemma \ref{lemma:talagrand-bousquet} in the appendix). Denote 
\[
\sigma^2 = \sup_{\|u\| = 1}\var\big(\zeta(Y_1(u))\big)
\]
and observe that $\mb{P} \left(S>c'N - M_4^4 \frac{N}{\sqrt m}\right)\leq e^{-t}$ whenever
\[
     M_4^4 \frac{N}{\sqrt m} + 2\mb{E}S + \sigma\sqrt{2tN} +  4t/3 \leq c'N.
\]
To prove that this relation holds for suitable choices of parameters $c$ and $t$, first note that 
\begin{equation*}
    \var\big(\zeta(Y_1(u))\big) \leq \mb{E}(\zeta(Y_1(u))^2\leq \mb{E}\zeta(Y_1(u)) \leq M_4^4 \frac{1}{m^{1/2}}.
\end{equation*}
Therefore, 
\(
\sigma^2 \leq M_4^4 \frac{1}{m^{1/2}}.
\)
Next, we will estimate $\mb{E}S$. 
Let $\varepsilon_1,\dots \varepsilon_n$ be a sequence of independent random signs. The standard argument based on application of symmetrization and contraction inequalities \citep[Theorem 4.4]{ledoux2013probability}, together with the fact that $\zeta(\cdot)$ is Lipschitz continuous with Lipschitz constant equal to $2$, yields that 
\begin{align*}
    \mb{E}S 
    \leq 2\mb{E}\sup_{\|u\| = 1}\left|\sum_{k=1}^N \varepsilon_k\zeta(Y_k)\right|
    &\leq 8\mb{E}\sup_{\|u\| = 1}\left|\sum_{k=1}^N \varepsilon_k \frac{|\langle X_k, u\rangle|}{m^{1/4}}\right|
    \\
    &\leq \frac{16}{m^{1/4}}\mb{E}\sup_{\|u\| = 1}\left|\sum_{k=1}^N \varepsilon_k \langle X_k, u\rangle\right|.
\end{align*}
Note that 
\begin{multline*}
\mb{E}\sup_{\|u\| = 1}\left|\sum_{k=1}^N \varepsilon_k \langle X_k, v\rangle\right| = \mb E \l\| \sum_{k=1}^N \varepsilon_k  X_k \r\|_2 \leq \mb E^{1/2}\l\| \sum_{k=1}^N  \varepsilon_k  X_k  \r\|_2^2
\\
=\l( \sum_{k=1}^N \mb E \|X_k\|_2^2 \r)^{1/2} = \l( N  d\r)^{1/2},
\end{multline*}
hence we conclude that 
\begin{equation} 
     \mb{E} S \leq \frac{16}{m^{1/4}} \l( N  d\r)^{1/2}.
\end{equation}
As a consequence, we have to choose $c$ and $t$ such that 
\begin{equation}    
\label{eq:t:rank}
M_4^4 \frac{N}{m^{1/2}}+\frac{32}{m^{1/4}} \l( N  d\r)^{1/2}+M_4^2 (2t)^{1/2}\frac{N^{1/2}}{m^{1/4}}+4t/3\leq c'N,
\end{equation}
which is satisfied if both $d$ and $t$ do not exceed a constant times $N$.

\subsection{Proof of Lemma \ref{thm:stochastic_convergence}}
\label{proof:thm:stochastic_convergence}

Note that
\begin{multline*}
    U_{N,m}(u,z) = \frac{1}{{N\choose m}} \sum_{J\in \m A_{N}^{(m)} }I\left\{ \dotp{\Sigma^{-1/2}\l(\bar X_J - \mu\r) - \frac{z}{\sqrt{m}}}{u}\geq 0\right\} \\
    - \pr{\dotp{\Sigma^{-1/2}\l(\bar X_m - \mu\r) - \frac{z}{\sqrt{m}}}{u}\geq 0}
\end{multline*}
So, in the course of this proof, it suffices to assume that $X_1, \ldots, X_N \sim \mathcal{E}(0, I_d, F)$ and define, with abuse of notations, 
\begin{multline*}
    U_{N,m}(u,z) = \frac{1}{{N\choose m}} \sum_{J\in \m A_{N}^{(m)} }I\left\{ \dotp{\bar X_J - \frac{z}{\sqrt{m}}}{u}\geq 0\right\} \\
    - \pr{\dotp{\bar X_m  - \frac{z}{\sqrt{m}}}{u}\geq 0} \,.
\end{multline*}
Let
\begin{multline*}
    h_{u}(X_1, \ldots, X_m;z) = I\left\{ \dotp{\bar X_m - \frac{z}{\sqrt{m}}}{u}\geq 0\right\} \\
    - \pr{\dotp{\bar X_m  - \frac{z}{\sqrt{m}}}{u}\geq 0}\,.
\end{multline*}
Hoeffding's decomposition implies
\begin{multline*}
   \sqrt{\frac{N}{m}} U_{N,m}(u,z) = \sqrt{\frac{N}{m}} \sum_{i=1}^N h^{(1)}_{u}\l(X_i;z\r) \\
   + \sum_{j=2}^m \sqrt{\frac{N}{m}}\frac{{m \choose j}}{{N \choose j}} \sum_{J \in \mathcal{I}^N_j} h^{(j)}_{u}\l(X_i,\  i \in J;z\r)\,.
\end{multline*}
Note that by Jensen's inequality and Corollary \ref{corollary:1},
\begin{align*}
    \mb E \sup_{(u,z) \in \bar{\mathcal{S}}}  \frac{{m \choose j}}{{N \choose j}}\l\lvert  \sum_{J \in \mathcal{I}^N_j} h^{(j)}_{u}(X_i,\  i \in \mathcal{I}^N_j;z)\r\rvert   \leq \frac{{m \choose j}^{1/2}}{{N \choose j}^{1/2}}\max\l\{B_1(p,j), B_2(p,j) \r\} \,,
\end{align*}
for all $p \geq 2$, where $B_1(p,j)$ and $B_2(p,j)$ are defined in Corollary \ref{corollary:1}. Moreover, note that all $\l\{h_{u}(\cdot;z): (u,z) \in \bar{\mathcal{S}}\r\}$ form the half planes in $\mb R^d$. So, the VC dimension of $\l\{h_{u}(\cdot;z): (u,z) \in \bar{\mathcal{S}}\r\}$ satisfies $V = d+1$. Furthermore, since every $\l\lvert h_{u}(\cdot;z) \r\rvert $ is bounded by $1$, Lemma \ref{lemma:higher-order} for $p = 2$ along with Jensen's inequality imply that 
\begin{align}\label{eq:THL-remainder-vanish}
    \mb E\sup_{(u,z) \in \bar{\mathcal{S}}} \sqrt{\frac{N}{m}} \frac{{m \choose j}}{{N \choose j}}\l\lvert  \sum_{J \in \mathcal{I}^n_j} h^{(j)}_{u}\l(X_i,\  i \in J;z\r)\r\rvert \rightarrow 0\,,
\end{align}
as $m,N \rightarrow \infty$ if $\frac{m^2d}{N} < c$ for $c$ small enough. Thus, Chebyshev's inequality implies it suffices to consider asymptotic distribution of the process
\begin{align*}
    (u,z) \mapsto \sqrt{\frac{m}{N}} \sum_{i=1}^N h^{(1)}_{u}\l(X_i;z\r)\,.
\end{align*}
Next, we will rewrite $h^{(1)}_{u}$ in a form that is easier to analyze. Note that $h_{u}(X_1, \ldots ,X_m;z)$ has mean $0$. Thus,
    \begin{multline*}
        h^{(1)}_{u} (X_1;z) = (\delta_{X_1} - P) \times P^{m-1} h = \delta_{X_i} \times  P^{m-1} h_{u,z} \\
        = \mb E_{X_{-1}} I\left\{ \dotp{\frac{\sum_{j \in [m]}X_j}{m}-\frac{z}{\sqrt{m}}}{u}\geq 0\right\} \\
        - \pr{\dotp{\bar X_m -\frac{z}{\sqrt{m}}}{u}\geq 0}\,,
    \end{multline*}
    where $X_{-j}$ denotes the vector $(X_i, i \neq j)$ for any $j$. 
    Let $X_1', \ldots, X_{m}'$ be independent copies of $X_1$ and recall that $W_{u,m} = \sum_{j=1}^{m} \frac{\dotp{X'_j}{u}}{\sqrt{m}}$. Remark that $X_1, \ldots, X_m$ have elliptical symmetric distribution. So, the distributions of $W_{u,m}$ and $-W_{u,m}$ are identical. It follows that 
    \begin{multline*}
        \mb E_{X_{-1}} I\left\{ \dotp{\frac{\sum_{j \in [m]}X_j}{m}  -\frac{z}{\sqrt{m}}}{u}\geq 0\right\} \\
        = \mb E_{W_{u,m-1}} I\left\{-\sqrt{\frac{m-1}{m^2}}W_{u,m-1} + \dotp{\frac{X_1}{m}}{u} \geq \dotp{\frac{z}{\sqrt{m}}}{u}\right\} \\
        = F_{m-1}\left(\sqrt{\frac{1}{m-1}}\dotp{X_1}{u} - \sqrt{\frac{m}{m-1}}\dotp{z}{u} \right)\,,
    \end{multline*}
    where $F_{m}$ is the distribution function of $W_{u,m}$. 
    Here, we use the fact that $X_1', \ldots, X_m'$ have elliptically symmetric distribution with identity covariance, and thus the distribution of $W_{u,m}$ is actually independent of $u$ (see section \ref{sec:elliptical-symmetric-distribution} for details). Hence,
\begin{multline}
\label{eq:Phi_uz}
    h^{(1)}_{u}(X_i;z) = F_{m-1}\left(\sqrt{\frac{1}{m-1}}\dotp{X_i}{u} - \sqrt{\frac{m}{m-1}}\dotp{z}{u}\right) \\
    - \mb E F_{m-1}\left(\sqrt{\frac{1}{m-1}}\dotp{X_i}{u} - \sqrt{\frac{m}{m-1}}\dotp{z}{u}\right)\,,
\end{multline}
The rest of the proof is divided into two parts: (1) asymptotic equicontinuity, and (2) central limit theorem for finite dimensional random vectors. We start with part (1). Recall that 
\[
\bar{\mathcal{S}} = \mathcal{S}^{d-1} \times \mathcal{S}^d_q = \{x \in \mb R^d : \|x\| = 1\} \times \{x \in \mb R^d: \|x\| \leq q\}.
\]
Define
\begin{align}
\label{eq:bar-S-delta}
    \bar{\mathcal{S}}_\delta = \l\{((u_1,z_1), (u_2,z_2)) \in \bar{\mathcal{S}} \times \bar{\mathcal{S}} : \|(u_1, z_1) - (u_2, z_2)\| \leq \delta\r\}\,.
\end{align}
We want to bound
\begin{align*}
    \sqrt{\frac{m-1}{N}} \mb E \sup_{((u_1,z_1), (u_2,z_2)) \in \bar{\mathcal{S}}_\delta} \l\lvert  \sum_{i=1}^N h^{(1)}_{u_1}(X_i; z_1) - h^{(1)}_{u_2}(X_i; z_2)\r\rvert\,.
\end{align*}
To this end, we define $f_{u, z, m}(X_i)$ as
\begin{multline}\label{eq:f-function}
    f_{u,z,m}(X_i) = F_{m-1}\l( \frac{\dotp{X_i}{u}}{\sqrt{m-1}} - \sqrt{\frac{m}{m-1}}\dotp{z}{u}\r) \\
    - F_{m-1}\l(  - \sqrt{\frac{m}{m-1}}\dotp{z}{u}\r)\,.
\end{multline}
Symmetrization inequality implies that 
\begin{multline*}
    \sqrt{\frac{m-1}{N}} \mb E \sup_{((u_1,z_1), (u_2,z_2)) \in \bar{\mathcal{S}}_\delta} \l\lvert  \sum_{i=1}^N h^{(1)}_{u_1}(X_i; z_1) - h^{(1)}_{u_2}(X_i; z_2)\r\rvert \\
    \leq C\sqrt{\frac{m-1}{N}} \mb E \sup_{((u_1,z_1), (u_2,z_2)) \in \bar{\mathcal{S}}_\delta} \l\lvert \sum_{i=1}^N \varepsilon_i \l(f_{u_1,z_1,m}(X_i) - f_{u_2,z_2,m}(X_i)\r)\r\rvert \,.
\end{multline*}
The process $(u,z) \mapsto \sqrt{\frac{m}{N}}\sum_{i=1}^N \varepsilon_i f_{u,z,m}(X_i)$ is sub-Gaussian conditionally on $X_1, \ldots, X_N$ with respect to the distance
\begin{align*}
    g^2_N \l((u_1, z_1), (u_2, z_2)\r) = \frac{m}{N} \sum_{i=1}^N \l(f_{u_1,z_1,m}(X_i)- f_{u_2,z_2,m}(X_i)\r)^2\,.
\end{align*}
To proceed, we need the following technical lemma
\begin{lemma}\label{lemma:Delta2}
    Under the conditions of Theorem \ref{thm:normality}, 
    \begin{multline}\label{eq:f-diff}
     \sqrt{m}\l\lvert f_{u_1,z_1,m}(X_i) - f_{u_2,z_2,m}(X_i) \r\rvert \\
    \leq \l[C(q+1)\|X_i\|\r] \|(u_1, z_1) - (u_2, z_2)\|\,.
\end{multline}
\end{lemma}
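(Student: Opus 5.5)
We need to bound $\sqrt m|f_{u_1,z_1,m}(x)-f_{u_2,z_2,m}(x)|$, where $f_{u,z,m}(x) = F_{m-1}(a(u,z)) - F_{m-1}(b(u,z))$ with
\[
a(u,z)=\frac{\dotp{x}{u}}{\sqrt{m-1}}-\sqrt{\frac{m}{m-1}}\dotp{z}{u}, \qquad b(u,z)=-\sqrt{\frac{m}{m-1}}\dotp{z}{u}.
\]
The plan is to use two facts: $F_{m-1}$ is Lipschitz with constant $\|\phi_{m-1}\|_\infty$, and this constant is bounded uniformly in $m$ for large $m$ by Lemma \ref{lemma:uniform-lipschitz}. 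A naive triangle inequality, $|F(a_1)-F(a_2)|+|F(b_1)-F(b_2)|$, fails. The term $|b_1-b_2|\le \sqrt{2}(\|z_1-z_2\|+q\|u_1-u_2\|)$ carries no $1/\sqrt m$ factor, so after multiplying by $\sqrt m$ it blows up.

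So the key step is to exploit cancellation between the two $F$ evaluations. Write
\[
f_{u,z,m}(x)=\int_{b(u,z)}^{b(u,z)+\dotp{x}{u}/\sqrt{m-1}}\phi_{m-1}(s)\,ds,
\]
and take the difference for $(u_1,z_1)$ and $(u_2,z_2)$. Set $\Delta_i=\dotp{x}{u_i}/\sqrt{m-1}$ and $b_i=b(u_i,z_i)$. I would split the difference as
\[
\int_{b_1}^{b_1+\Delta_1}\phi-\int_{b_1}^{b_1+\Delta_2}\phi \;+\; \int_{b_1}^{b_1+\Delta_2}\phi-\int_{b_2}^{b_2+\Delta_2}\phi .
\]
The first piece is at most $\|\phi_{m-1}\|_\infty|\Delta_1-\Delta_2|\le \|\phi_{m-1}\|_\infty\|x\|\|u_1-u_2\|/\sqrt{m-1}$. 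The second piece equals $\int_0^{\Delta_2}(\phi(b_1+s)-\phi(b_2+s))\,ds$. Using the uniform Lipschitz continuity of $\phi_{m-1}$ (constant $L$, again from Lemma \ref{lemma:uniform-lipschitz}), it is at most
\[
L|\Delta_2||b_1-b_2|\le L\frac{\|x\|}{\sqrt{m-1}}\cdot C(\|z_1-z_2\|+q\|u_1-u_2\|).
\]
Multiplying both pieces by $\sqrt m$ gives the bound $C(q+1)\|x\|\,\|(u_1,z_1)-(u_2,z_2)\|$. Here we use $\sqrt{m/(m-1)}\le\sqrt2$ and $\|u_1-u_2\|+\|z_1-z_2\|\le\sqrt2\|(u_1,z_1)-(u_2,z_2)\|$.

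The main obstacle is spotting that a plain Lipschitz bound on $F_{m-1}$ is insufficient and that the needed factor $1/\sqrt m$ has to come from the second-order (Lipschitz) regularity of the density $\phi_{m-1}$. This regularity must hold uniformly in $m$, which is exactly what Lemmas \ref{lemma:uniform-lipschitz} and \ref{lemma:implication-of-uniform-lipschitz} provide under the elliptical-symmetry and characteristic-generator decay assumptions; for finitely many small $m$ the constant can simply be absorbed into $C$.
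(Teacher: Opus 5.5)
Your proposal is correct and is essentially the paper's own proof. The paper also writes $f_{u,z,m}(X_i)$ as the integral of $\phi_{m-1}$ over an interval of length $\langle X_i,u\rangle/\sqrt{m-1}$. It then splits the difference into an endpoint-change term, bounded via the uniform bound on $\|\phi_{m-1}\|_\infty$, and a shift term, bounded by the uniform Lipschitz constant of $\phi_{m-1}$ times $|\langle X_i,u_2\rangle|/\sqrt{m-1}$, exactly as you do. One quibble concerns your closing remark: for small $m$ the density $\phi_{m-1}$ may be unbounded (Lemma \ref{lemma:uniform-lipschitz} only controls it for large $m$), and then the inequality can genuinely fail, so small $m$ cannot simply be absorbed into $C$. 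This is harmless, since, as in the paper, the lemma is only needed for all sufficiently large $m$.
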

This lemma implies that for any $\varepsilon > 0$, 
\[
g_N \l((u_1, z_1), (u_2, z_2)\r) \leq \varepsilon\sqrt{\frac{1}{N}\sum_{i=1}^N C(q+1)^2\|X_i\|^2}
\]
whenever
\(
\|(u_1, z_1) - (u_2, z_2)\| \leq \varepsilon.
\)
Thus
\begin{multline*}
    \mb E_\varepsilon \sup_{((u_1,z_1), (u_2,z_2)) \in \bar{\mathcal{S}}_\delta} \l\lvert \sum_{i=1}^N \varepsilon_i \l(f_{u_1,z_1,m}(X_i) - f_{u_2,z_2,}(X_i)\r)\r\rvert \\ \leq \int_0^{\delta\sqrt{N^{-1}\sum_{i=1}^N C(q+1)^2\|X_i\|^2}} \l(\log_+\l(N(\bar{\mathcal{S}}, g_N, t)\r)\r)^{1/2} dt \,.
\end{multline*}
It is easy to check that $N(\bar{\mathcal{S}}, g_N, t) \leq N\l(\bar{\mathcal{S}}, \|\cdot\|, \frac{t}{\sqrt{N^{-1}\sum_{i=1}^N C(q+1)^2\|X_i\|^2}}\r)$, and thus the above integral can be bounded by
\begin{align*}
    \sqrt{N^{-1}\sum_{i=1}^N C(q+1)^2\|X_i\|^2} \int^\delta_0 \l(\log_+\l(N(\bar{\mathcal{S}}, \|\cdot\|, t)\r)\r)^{1/2} dt\,.
\end{align*}
Moreover, it is easy to see that 
\[
N(\bar{\mathcal{S}}, \|\cdot\|, t) \leq N(\mathcal{S}^{d-1}, \|\cdot\|, t/2) \times N(\mathcal{S}^{d}_q, \|\cdot\|, t/2).
\]
The first covering number is bounded by $\l(\frac{C}{t}\r)^{d}$ since $\mathcal{S}^{d-1}$ is the unit sphere in $\mb R^d$, whereas $\mathcal{S}^d_{q}$ is a ball in $\mb R^d$ with radius $q$, which implies the second covering number is also at most $\l(\frac{Cq}{t}\r)^{d}$. Hence, 
\begin{align*}
\int^\delta_0 \l(\log_+\l(N(\bar{\mathcal{S}}, \|\cdot\|, t)\r)\r)^{1/2} dt \leq d^{1/2}\int^\delta_0 \l(\log_+\l(\frac{C_1}{t} \r) + \log_+\l(\frac{C_2q}{t}\r)\r)^{1/2} dt\,. 
\end{align*}
Thus, it suffices to show $\mb E \sqrt{N^{-1}\sum_{i=1}^N C(q+1)^2\|X_i\|^2}$ is bounded. However, by Jensen's inequality, this is bounded by $C(1+q)\sqrt{d}$. To conclude, we have obtained
\begin{multline*}
    \sqrt{\frac{m-1}{N}} \mb E \sup_{((u_1,z_1), (u_2,z_2)) \in \bar{\mathcal{S}}_\delta} \l\lvert  \sum_{i=1}^N h^{(1)}_{u_1}(X_i; z_1) - h^{(1)}_{u_2}(X_i; z_2)\r\rvert \\
    \leq C(1+q)d \int^\delta_0 \l(\log_+\l(\frac{C_1}{t} \r) + \log_+\l(\frac{C_2q}{t}\r)\r)^{1/2} dt
\end{multline*}
Clearly, as $\delta \rightarrow 0$, the term on the right hand side converges to $0$, which validates the asymptotic equicontinuity.

Now, let us focus on part (2), i.e., finite dimensional convergence to Gaussian vectors. We will invoke Cram\'{e}r-Wold theorem. Let $M$ be a natural number and $t = (t_1, t_2, \ldots, t_M)^\top \in \mb R^M$ be an arbitrary non-zero real vector. Moreover, let $\{(u_1, z_1), \ldots, (u_M, z_M)\}$ be a set of parameters. We will show that $\sqrt{\frac{m}{N}}\sum_{j=1}^N H (X_j; (u_i, z_i), i \in [M])$ is asymptotically normal as $m,N$ grow where
\begin{align}\label{eq:H}
    H (X_j; (u_i, z_i), i \in [M]) = \sum_{i = 1}^M t_i h^{(1)}_{u_i} (X_j;z_i)\,.
\end{align}
Given that $\mb E H (X_1; (u_i, z_i), i \in [M]) = 0$, by Lindeberg's condition, the goal is to find a vanishing upper bound for 
\begin{align}
\label{eq:Lindeberg}
    \frac{1}{\mb E \l(\sqrt{m}H(X_1; (u_i, z_i), i \in [M])\r)^2} \mb E \l[\l\lvert\sqrt{m} H(X_1; (u_i, z_i), i \in [M]) \r\rvert^2 I \l\{A_\eps \r\}\r]\,,
\end{align}
where
\begin{multline}\label{eq:A_epsilon}
    A_\eps = \Biggl\{ \l\lvert \sqrt{m} H(X_1; (u_i, z_i), i \in [M])\r\rvert \\
    > \varepsilon \sqrt{N \mb E \l(\sqrt{m}H(X_1; (u_i, z_i), i \in [M])\r)^2 }\Biggr\}
\end{multline}
and $\varepsilon > 0$ is an arbitrary fixed constant. To this end, define
\begin{multline*}
    K(X_1; (u_i, z_i), i \in [M]) \\
    = \sum_{i, j \in [M]} t_i t_{j} \l(e^{-\frac{\l\lvert \dotp{z_i}{u_i}\r\rvert^2}{2}}\frac{\dotp{X_1}{u_i}}{\sqrt{2\pi}}\r)\l(e^{-\frac{\l\lvert \dotp{z_{j}}{u_{j}}\r\rvert^2}{2}}\frac{\dotp{X_1}{u_j}}{\sqrt{2\pi}}\r)\,.
\end{multline*}
Since $X_1$ has covariance matrix $I_d$, we have that
\begin{align*}
    \mb E \l(\dotp{X_1}{u_i}\dotp{X_1}{u_{j}}\r)  = \dotp{u_i}{u_{j}}\,.
\end{align*}
Hence, we conclude that $\mb E K(X_1; (u_i, z_i), i \in [M]) = t^\top \Sigma_M t$ where $\Sigma_M$ is a matrix with entries
\begin{align*}
    \l(\Sigma_M\r)_{i,j} = \exp{-\frac{\l\lvert \dotp{z_{i}}{u_{i}}\r\rvert^2 + \l\lvert \dotp{z_j}{u_j}\r\rvert^2}{2}}\frac{\dotp{u_i}{u_{j}}}{2\pi}
\end{align*}
We will replace $\l\lvert\sqrt{m} H(X_1; (u_i, z_i), i \in [M]) \r\rvert^2$ with $K(X_1; (u_i, z_i), i \in [M])$ that is easier to analyze. This replacement relies on the following lemma. Recall that $\bar{\mathcal{S}} = \mathcal{S}^{d-1} \times \mathcal{S}^d_q = \{x \in \mb R^d : \|x\| = 1\} \times \{x \in \mb R^d: \|x\| \leq q\}$.
\begin{lemma}
\label{lemma: sqrt_l_h_convergence}
    Let $C_F$ be a constant that depends only on the distribution $F$. Recall that $K_{q,m} = \sup_{z \in [-2q,2q]}\l\lvert \phi_{m-1}(z) - \frac{1}{\sqrt{2\pi}}\exp{-\frac{z^2}{2}}\r\rvert$. Under the assumptions of Theorem \ref{thm:normality}, 
    \begin{align}
    \label{eq:variance-hajek-THL}
        \l\lvert \var\l(\sqrt{m} h^{(1)}_{u}\l(X_1;z\r)\r) - \frac{1}{2\pi}\exp{-\l\lvert \dotp{z}{u} \r\rvert^2} \r\rvert \leq C_F\l(K^2_{q,N} +\frac{1}{m} + \frac{q^2}{m}\r)\,.
    \end{align}
    Moreover, for any vector $t \in \mb R^M$ and $(u_1, z_1), \ldots, (u_M, z_M) \in \bar{\mathcal{S}}$,
    \begin{multline}\label{eq:key-L1-convergence}
    \mb E \l\lvert  \l(\sqrt{m} H(X_1; (u_i, z_i), i \in [M])\r)^2 - K(X_1; (u_i, z_i), i \in [M]) \r\rvert \\
    \leq C_F\sum_{i, j \in [M]} t_i t_j \l(K_{q,m}^2 + \frac{1}{m} + \frac{q^2}{m^2}\r)\,.
\end{multline}

\end{lemma}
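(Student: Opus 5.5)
Both estimates rest on a first-order expansion of $\sqrt m\,h^{(1)}_u(X_1;z)$ around the linear functional $\frac{1}{\sqrt{2\pi}}e^{-\dotp{z}{u}^2/2}\dotp{X_1}{u}$. By \eqref{eq:Phi_uz}, with $a=\sqrt{m/(m-1)}\dotp{z}{u}$ (so $|a|\le 2q$ for $m\ge 2$) and $Z=\dotp{X_1}{u}/\sqrt{m-1}$,
\[
\sqrt m\, h^{(1)}_u(X_1;z)=\sqrt m\l[F_{m-1}(Z-a)-F_{m-1}(-a)\r]-\sqrt m\,\mb E\l[F_{m-1}(Z-a)-F_{m-1}(-a)\r].
\]
I would write $F_{m-1}(Z-a)-F_{m-1}(-a)=\phi_{m-1}(-a)Z+\int_0^{Z}(\phi_{m-1}(s-a)-\phi_{m-1}(-a))ds$. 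The uniform Lipschitz property of $\phi_{m-1}$ (Lemma \ref{lemma:uniform-lipschitz}) bounds the remainder by $\frac L2 Z^2$. The remainder therefore contributes at most $\frac{L}{2}\sqrt m\,\dotp{X_1}{u}^2/(m-1)$, which has second moment $O(\mb E\dotp{X}{u}^4/m)$. Under only second moments I would use the truncation \eqref{eq:z-decomposition}: on $|\dotp{X_1}{u}|\le m^{1/4}$ use Lipschitz, and off it use $2\|\phi_{m-1}\|_\infty|Z|$ together with uniform integrability. This gives an $o(1)$ term that can be absorbed into $C_F/m$-type constants, modulo bookkeeping.

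The linear part has mean zero, and $\sqrt{m/(m-1)}=1+O(1/m)$. The coefficient $\phi_{m-1}(-a)$ differs from $\frac1{\sqrt{2\pi}}e^{-a^2/2}$ by at most $K_{q,m}$, by the definition of $K_{q,m}$ (since $|a|\le 2q$). Also $e^{-a^2/2}$ differs from $e^{-\dotp{z}{u}^2/2}$ by $O(q^2/m)$, because $a^2-\dotp{z}{u}^2=\dotp{z}{u}^2/(m-1)$ and $x e^{-x}$ is bounded. Writing
\[
\sqrt m\,h^{(1)}_u(X_1;z)=\frac{1}{\sqrt{2\pi}}e^{-\dotp{z}{u}^2/2}\dotp{X_1}{u}+R_u(X_1;z),
\qquad \mb E R_u^2\le C_F\l(K_{q,m}^2+\frac1m+\frac{q^2}{m}\r),
\]
the variance bound \eqref{eq:variance-hajek-THL} follows from $\mb E\dotp{X_1}{u}^2=1$ and $|\var(A+R)-\var(A)|\le 2\|A\|_2\|R\|_2+\|R\|_2^2$. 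The cross term is of order $\|R\|_2$, not $\|R\|_2^2$. To reach the squared form $K_{q,m}^2$ as stated, I would exploit symmetry: $\phi_{m-1}$ is even by elliptical symmetry, so $\mb E[\dotp{X}{u}\dotp{X}{u}^2]=0$ and the leading cross term with the quadratic remainder vanishes. The cross term involving the coefficient error $\phi_{m-1}(-a)-\frac{1}{\sqrt{2\pi}}e^{-a^2/2}$ is exact, namely $(\phi^2-\phi_0^2)\cdot 1$, and is bounded by $2\|\phi\|_\infty K_{q,m}$. If the squared version fails, I would accept the linear order $K_{q,m}$, which suffices for the Lindeberg application since only vanishing is needed.

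For \eqref{eq:key-L1-convergence} I would expand $(\sqrt m H)^2=\sum_{i,j}t_it_j A_iA_j$, where $A_i=\sqrt m\,h^{(1)}_{u_i}(X_1;z_i)=L_i+R_i$ and $L_iL_j$ are exactly the summands of $K$. Then $\mb E|A_iA_j-L_iL_j|\le \|L_i\|_2\|R_j\|_2+\|R_i\|_2\|L_j\|_2+\|R_i\|_2\|R_j\|_2$ by Cauchy–Schwarz, and each term is controlled by the $R$ bound above. The main obstacle is keeping the remainder estimates uniform in $(u,z)\in\bar{\mathcal S}$ with only second moments. Here I would rely on the fact that the law of $\dotp{X}{u}$ does not depend on $u$, so every remainder bound reduces to a one-dimensional statement about $F_1$, with constants collected into $C_F$.
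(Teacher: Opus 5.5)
Your decomposition is the paper's. You approximate $\sqrt m\,h^{(1)}_u(X_1;z)$ in $L_2$ by $L=\phi_0(b)Y$, where $Y=\langle X_1,u\rangle$, $b=\langle z,u\rangle$ and $\phi_0$ is the standard normal density. The errors come from the coefficient (controlled by $K_{q,m}$), the $\sqrt{m/(m-1)}$ corrections and the Taylor remainder, and you then pass to the variance and to $\mathbb{E}|A_iA_j-L_iL_j|$ by Cauchy--Schwarz. The two places where you hesitate are exactly where the paper's own proof is defective. Its step $\mathbb{E}(gY)^2\le\mathbb{E}g^2$ does not follow from Cauchy--Schwarz. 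It also keeps the squared right-hand side of its $L_2$ bound when it deduces the variance and $L^1$ estimates. Your proposal does not close these holes either, and part of the statement cannot be proved at all.

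\textbf{The remainder.} An $o(1)$ with no rate cannot be ``absorbed into $C_F/m$-type constants'', and with only $\mathbb{E}\|X\|^2<\infty$ there is no rate. Indeed $|\sqrt m\,h^{(1)}_u|\le\sqrt m$, while $c_m=\sqrt{m/(m-1)}\,\phi_{m-1}(-a)\ge c_q>0$ for large $m$. Hence the remainder $\rho=\sqrt m\,h^{(1)}_u-c_mY$ satisfies $\mathbb{E}\rho^2\ge c_q'\,\mathbb{E}[Y^2\mathbf{1}\{|Y|\ge C_q\sqrt m\}]$. This can decay arbitrarily slowly within the admissible class, for example for Gaussian scale mixtures whose mixing variable is heavy-tailed and bounded away from $0$. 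Your Lipschitz bound gives a $1/m$ rate only under $\mathbb{E}\langle X,u\rangle^4<\infty$.

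\textbf{The squared rates.} Cauchy--Schwarz turns $\mathbb{E}R^2\le\Delta$ into errors of order $\sqrt\Delta$, not $\Delta$.

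Your parity idea is sound for the remainder. $\mathbb{E}[Y\rho]$ sees only the odd part of $\rho$, which is $O(\sqrt m|Z|^3)$ once $\sup_m\|\phi''_{m-1}\|_\infty<\infty$; that bound can be proved as in Lemma~\ref{lemma:uniform-lipschitz} with weight $t^2$. Under a fourth moment this cross term is then $O(1/m)$.

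Parity cannot help with the coefficient. We have $c_m^2-\phi_0(b)^2=2\phi_0(a)\bigl(\phi_{m-1}(-a)-\phi_0(a)\bigr)+O(K_{q,m}^2+1/m)$, which is linear in the local CLT error. So the variance bound you obtain is $C_F(K_{q,m}+1/m)$. The $K_{q,m}^2$ form needs the separate fact $K_{q,m}=O(1/m)$. That fact does hold under a fourth moment, by a local Edgeworth expansion whose third-cumulant term vanishes by symmetry.

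For the $L^1$ bound nothing cancels inside the absolute value, and the second displayed estimate is false. Take $X\sim\mathcal N(0,I_d)$; then $\phi_{m-1}=\phi_0$, so $K_{q,m}=0$. For $M=1$, $t_1=1$ and $b\neq0$, expanding the standard normal distribution function $\Phi$ around $-a$ gives
\[
\sqrt m\,h^{(1)}_u(X_1;z)=\phi_0(b)\Bigl(Y+\frac{b}{2\sqrt m}\,(Y^2-1)\Bigr)+O_{L_2}\bigl(m^{-1}\bigr).
\]
Hence $\mathbb{E}\bigl|m\,h^{(1)}_u(X_1;z)^2-\phi_0(b)^2Y^2\bigr|=\frac{|b|\,\phi_0(b)^2}{\sqrt m}\,\mathbb{E}|Y^3-Y|+O(m^{-1})$, which is not $O(m^{-1}+q^2m^{-2})$.

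\textbf{What your argument actually proves.} Under a fourth moment, $\|R\|_2\le C_F(K_{q,m}+m^{-1/2})$, so both quantities are $O(K_{q,m}+m^{-1/2})$. In the second bound the weights should be $\sum_{i,j}|t_i||t_j|$, and the Gaussian example shows $m^{-1/2}$ is sharp there. With second moments only, both quantities are $o(1)$ uniformly on $\bar{\mathcal S}$, by your truncation together with $K_{q,m}\to0$ (Lemma~\ref{lemma:implication-of-uniform-lipschitz}).

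The Lindeberg step uses only that $\mathbb{E}(\sqrt m H)^2\to t^\top\Sigma_M t$ and that the $L^1$ difference vanishes. So your fallback is the right repair, but it should be stated as the lemma itself, not offered as a detour on the way to the squared bounds.
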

\noindent In view of Lemmas \ref{lemma: sqrt_l_h_convergence} and \ref{lemma:implication-of-uniform-lipschitz}, it is immediate that
\begin{align*}
    \mb E \l( \sqrt{m} H(X_1; (u_i, z_i), i \in [M]) \r)^2- t^T \Sigma_M t  \rightarrow 0
\end{align*}
for fixed $q$. Hence, convergence in \eqref{eq:Lindeberg} can be reduced to convergence of

\begin{align*}
    \mb E \l[\l\lvert\sqrt{m} H(X_1; (u_i, z_i), i \in [M]) \r\rvert^2 I \l\{A_\varepsilon \r\}\r]\,.
\end{align*}
In turn, this quantity can be estimated from above by
\begin{multline*}
    \mb E \l[ \lvert K(X_1; (u_i, z_i), i \in [M]) \rvert I \l\{A_\varepsilon \r\}\r]\\
    + \mb E \l\lvert \l(\sqrt{m} H(X_1; (u_i, z_i), i \in [M]) \r)^2- K(X_1; (u_i, z_i), i \in [M]) \r\rvert\,.
\end{multline*}
The second term vanishes as $m \rightarrow \infty$ by \eqref{eq:key-L1-convergence}. It remains to show the convergence of the first term. Note that for any $R > 0$,
\begin{multline}\label{eq:upper-bound-for-L1}
    \mb E \l[\lvert K(X_1; (u_i, z_i), i \in [M]) \rvert I \l\{A_\varepsilon \r\}\r] \leq R \pr{A_\varepsilon} \\
    + \mb E \l(\lvert K(X_1; (u_i, z_i), i \in [M]) \rvert I\l\{\lvert K(X_1; (u_i, z_i), i \in [M]) \rvert > R\r\}\r)\,.
\end{multline}
Recall the definition of $A_\varepsilon$ in \eqref{eq:A_epsilon}, Chebyshev's inequality implies $M \pr{A_\varepsilon} \leq \frac{R}{\varepsilon^2 N}$. Moreover, the fact that $\mb E \l\lvert \dotp{X_1}{u}\r\rvert^2  = 1$ implies
\begin{multline*}
    \mb E \l\lvert K(X_1; (u_i, z_i), i \in [M]) \r\rvert \leq C\sum_{i, j \in [M]} t_i t_{j} \mb E\l\lvert \dotp{X_1}{u_i}\dotp{X_1}{u_j}\r\rvert \\
    \leq 
    C\sum_{i, j \in [M]} t_i t_{j} \mb E\l\lvert \dotp{X_1}{u_i}\r\rvert^2\mb E \l\lvert \dotp{X_1}{u_j}\r\rvert^2 < \infty
\end{multline*}
Thus, dominated convergence theorem implies
\begin{align*}
    \mb E \l(\lvert K(X_1; (u_i, z_i), i \in [M]) \rvert I\l\{\lvert K(X_1; (u_i, z_i), i \in [M]) \rvert > R\r\}\r) \rightarrow 0
\end{align*}
as $R \rightarrow \infty$. Selecting $R = N^{1/2}$ implies \eqref{eq:upper-bound-for-L1} converges to $0$ as $m,N$ increase to infinity.

\subsection{Proof of Lemma \ref{lemma:Delta2}}
\label{proof:Delta2}

Note that
\begin{align*}
    f_{u,z,m}(X_i) = \int_{0}^{\frac{\dotp{X_i}{u}}{\sqrt{m-1}}} \phi_{m-1}\l(t - \sqrt{\frac{m}{m-1}}\dotp{z}{u}\r)  dt \,.
\end{align*}
Since Lemma \ref{lemma:uniform-lipschitz} implies $\phi_{m-1}$ is Lipschitz with a constant uniformly for all $m$ large enough, by adding and subtracting $\phi_{m-1}\l(t - \sqrt{\frac{m}{m-1}}\dotp{z_1}{u_2}\r)$, we deduce that
\begin{multline*}
    \l\lvert \phi_{m-1}\l(t - \sqrt{\frac{m}{m-1}}\dotp{z_1}{u_1}\r) - \phi_{m-1}\l(t - \sqrt{\frac{m}{m-1}}\dotp{z_2}{u_2}\r) \r\rvert \\
    \leq C \l\lvert \dotp{z_1 - z_2}{u_1} \r\rvert + C \l\lvert \dotp{z_2}{u_1 - u_2} \r\rvert \,.
\end{multline*}
Since $z_2 \in \mathcal{S}^d_q$, we conclude that the last term can be bounded by $C_1 \|z_1 - z_2\| + C_2 q \|u_1 - u_2\|$. Therefore,
\begin{multline*}
    \l\lvert f_{u_1,z_1,m}(X_i) - f_{u_2,z_2,m}(X_i) \r\rvert \\
    \leq \l\lvert \int^{\frac{\dotp{X_i}{u_1}}{\sqrt{m-1}}}_{\frac{\dotp{X_i}{u_2}}{\sqrt{m-1}}} \phi_{m-1}\l(t - \sqrt{\frac{m}{m-1}}\dotp{z_1}{u_1}\r) dt \r\rvert \\
    + \l\lvert \int_0^{\frac{\dotp{X_i}{u_2}}{\sqrt{m-1}}} C_1 \|z_1 - z_2\| + C_2 q \|u_1 - u_2\| dt \r\rvert \,.
\end{multline*}
Since $\phi_{m-1}$ is bounded uniformly for all large $m$ by Lemma \ref{lemma:implication-of-uniform-lipschitz}, the first integral on the right is bounded by $\frac{C}{\sqrt{m-1}}\l\lvert \dotp{X_i}{u_1-u_2} \r\rvert \leq \frac{C}{\sqrt{m-1}}\l\| X_i\r\| \|u_1 - u_2\|$. The second integral on the right is bounded by $\frac{\l\| X_i\r\|}{\sqrt{m-1}}\l(C_1 \|z_1 - z_2\| + C_2 q \|u_1 - u_2\|\r)$. Hence
\begin{equation*}
     \sqrt{m}\l\lvert f_{u_1,z_1,m}(X_i) - f_{u_2,z_2,m}(X_i) \r\rvert 
     \leq C_1(1+q)\|X_i\| \|u_1 - u_2\| + C_2\|X_1\| \|z_1 - z_2\| \,,
\end{equation*}
which implies \eqref{eq:f-diff}.

\subsection{Proof of Lemma \ref{lemma: sqrt_l_h_convergence}}

In order to establish \eqref{eq:variance-hajek-THL} and \eqref{eq:key-L1-convergence}, a key intermediate step consists in controlling the magnitude of
    \[
    \sqrt{m}h^{(1)}_{u}(X_1;z) - \frac{1}{\sqrt{2\pi}}\exp{-\frac{\l\lvert \dotp{z}{u}\r\rvert^2}{2}}\dotp{X_i}{u}
    \]
    as $m \rightarrow \infty$ for any fixed $(u,z) \in \bar{\mathcal{S}}$. Recall the definition of $h^{(1)}_{u}$ in \eqref{eq:Phi_uz}. We will focus on the quantity
    \begin{multline*}
        \sqrt{m}\Biggl[F_{m-1} \left(\sqrt{\frac{1}{m-1}}\dotp{X_1}{u} - \sqrt{\frac{m}{m-1}}\dotp{z}{u}\right) \\
        - \mb E F_{m-1} \left(\sqrt{\frac{1}{m-1}}\dotp{X_1}{u} - \sqrt{\frac{m}{m-1}}\dotp{z}{u}\right)\Biggr] \\
        - \frac{1}{\sqrt{2\pi}}\exp{-\frac{\l\lvert \dotp{z}{u}\r\rvert^2}{2}}\dotp{X_i}{u} \,.
    \end{multline*}
    Because $\|u\| = 1$, it is immediate that
    \begin{align}\label{eq:2-moment-V1}
        \mb E \l\lvert \dotp{X_1}{u}\r\rvert^2  = 1\,,
    \end{align}
    which further implies
    \begin{multline*}
        \mb E\l[\phi_{m-1}\l(-\dotp{z}{u}\r)\dotp{X_1}{u}-\frac{1}{\sqrt{2\pi}}\exp{-\frac{\l\lvert \dotp{z}{u}\r\rvert^2}{2}}\dotp{X_1}{u}\r]^2 \\
        \leq K^2_{q,m} \mb E\l(\dotp{X_1}{u}\r)^2 = K^2_{q,m}\,,
    \end{multline*}
    where $K_{q,m} = \sup_{z \in [-2q,2q]}\l\lvert \phi_{m-1}(z) - \frac{1}{\sqrt{2\pi}}\exp{-\frac{z^2}{2}}\r\rvert$. Next, since $\phi_{m-1}$ is Lipschitz continuous with a uniformly bounded Lipschitz constant for large $m$ (this follows from Lemma \ref{lemma:uniform-lipschitz}), it is easy to see that
    \begin{multline*}
        \l\lvert \phi_{m-1}\l(-\dotp{z}{u}\r) - \phi_{m-1}\l(-\sqrt{\frac{m}{m-1}}\dotp{z}{u}\r) \r\rvert  \\
        \leq C_F \lvert \dotp{z}{u}\rvert  \l( \sqrt{\frac{m}{m-1}}  - 1\r)\leq \frac{qC_F}{m}\,,
    \end{multline*}
    where we use the fact that $z \in \bar{\mathcal{S}}_q := \{x \in \mb R^d: \|x\| \leq q\}$ and $u \in \mathcal{S}^{d-1} = \l\{x \in \mb R^d : \|x\| = 1\r\}$ in the last inequality. Hence,
    \begin{align*}
        \mb E \l[\phi_{m-1}\l(-\dotp{z}{u}\r)\dotp{X_1}{u} - \phi_{m-1}\l(-\sqrt{\frac{m}{m-1}}\dotp{z}{u}\r)\dotp{X_1}{u}\r]^2 \leq \frac{q^2C_F}{m^2}
    \end{align*}
    Moreover, uniform boundedness of $\phi_{m-1}$ by Lemma \ref{lemma:implication-of-uniform-lipschitz} implies that 
    \begin{multline*}
        \mb E \Biggl[\phi_{m-1}\l(-\sqrt{\frac{m}{m-1}}\dotp{z}{u}\r)\dotp{X_1}{u} \\
        - \sqrt{\frac{m}{m-1}}\phi_{m-1}\l(-\sqrt{\frac{m}{m-1}}\dotp{z}{u}\r)\dotp{X_1}{u}\Biggr]^2 \leq \frac{C_F}{m^2}\,.
    \end{multline*}
    Therefore,
    \begin{multline*}
        \mb E\l[\sqrt{m}h^{(1)}_{u}(X_1;z) - \frac{1}{\sqrt{2\pi}}\exp{-\frac{\l\lvert \dotp{z}{u}\r\rvert^2}{2}}\dotp{X_1}{u}\r]^2 \\
        \leq K_{q,m}^2 + \frac{C_F(q^2+1)}{m^2} + \mb E\l(\sqrt{m} f_{u,z,m}(X_1)\r)^2 \,,
    \end{multline*}
    where
    \begin{multline*}
        f_{u,z,m}(X_1) = \Biggl[F_{m-1}\left(\frac{\dotp{X_1}{u}}{\sqrt{m-1}} - \sqrt{\frac{m}{m-1}}\dotp{z}{u}\right) \\
        - \mb E F_{m-1}\left(\frac{\dotp{X_1}{u}}{\sqrt{m-1}} - \sqrt{\frac{m}{m-1}}\dotp{z}{u}\right)\Biggr] \\
        -  \phi_{m-1}\l(- \sqrt{\frac{m}{m-1}}\dotp{z}{u}\r)\frac{\dotp{X_1}{u}}{\sqrt{m-1}}\,.
    \end{multline*}
    Given that $\phi_{m-1}\l(- \sqrt{\frac{m}{m-1}}\dotp{z}{u}\r)\frac{\dotp{X_1}{u}}{\sqrt{m-1}}$ has mean $0$, adding and subtracting the deterministic $F_{m-1}\l(- \sqrt{\frac{m}{m-1}}\dotp{z}{u}\r)$, it suffices to focus on
    \begin{multline}\label{eq:L2-intermediate}
        \sqrt{m}\l[F_{m-1}\left(\frac{\dotp{X_1}{u}}{\sqrt{m-1}} - \sqrt{\frac{m}{m-1}}\dotp{z}{u}\right) - F_{m-1}\left(- \sqrt{\frac{m}{m-1}}\dotp{z}{u}\right)\r]\\
        - \sqrt{\frac{m}{m-1}}\phi_{m-1}\l(- \sqrt{\frac{m}{m-1}}\dotp{z}{u}\r)\dotp{X_1}{u}  \,.
    \end{multline}
    Note that
    \begin{multline*}
    \sqrt{m}\l[F_{m-1}\left(\frac{\dotp{X_1}{u}}{\sqrt{m-1}} - \sqrt{\frac{m}{m-1}}\dotp{z}{u}\right) - F_{m-1}\left(- \sqrt{\frac{m}{m-1}}\dotp{z}{u}\right)\r] \\
    = \sqrt{\frac{m}{m-1}} \phi_{m-1}\l(\frac{t\dotp{X_1}{u}}{\sqrt{m-1}} - \sqrt{\frac{m}{m-1}}\dotp{z}{u}\r) \dotp{X_1}{u}  \,,
    \end{multline*}
    where the equation is implied by Taylor expansion for some $t \in (0,1)$. Thus, defining $g(x,y) = \phi_{m-1}(x-y) - \phi_{m-1}(-y)$, it follows that \eqref{eq:L2-intermediate} can be bounded by $Cg\l(\frac{t\dotp{X_1}{u}}{\sqrt{m-1}}, - \sqrt{\frac{m}{m-1}}\dotp{z}{u}\r) \dotp{X_1}{u}$. Next, Cauchy-Schwarz inequality along with \eqref{eq:2-moment-V1} implies that
    \begin{multline*}
        \mb E \l(g\l(\frac{t\dotp{X_1}{u}}{\sqrt{m-1}}, - \sqrt{\frac{m}{m-1}}\dotp{z}{u}\r) \dotp{X_1}{u}\r)^2 \\
        \leq \mb E \l(g\l(\frac{t\dotp{X_1}{u}}{\sqrt{m-1}}, - \sqrt{\frac{m}{m-1}}\dotp{z}{u}\r)\r)^2 \\
        \leq \frac{C_F t^2 \mb E \l(\dotp{X_1}{u}\r)^2}{m-1} \leq \frac{C_F}{m}\,.
    \end{multline*}
    Collecting all estimates yields
    \begin{multline}\label{eq:convergence-L2}
        \mb E\l[\sqrt{m}h^{(1)}_{u}(X_1;z) - \frac{1}{\sqrt{2\pi}}\exp{-\frac{\l\lvert \dotp{z}{u}\r\rvert^2}{2}}\dotp{X_i}{u}\r]^2 \\
        \leq K_{q,m}^2 + \frac{C_F}{m} + \frac{q^2C_F'}{m^2}\,.
    \end{multline}
    The display above immediately yields inequality \eqref{eq:variance-hajek-THL}. Next, let us establish relation \eqref{eq:key-L1-convergence}. Recall that 
\begin{align*}
    \l(\sqrt{m} H^{(1)}(X_1)\r)^2 =  \sum_{i, j \in [M]} t_i t_{j} \l(\sqrt{m} h^{(1)}_{u_i} (X_1; z_i)\r)\l(\sqrt{m} h^{(1)}_{u_{j}} (X_1;z_j)\r)\,,
\end{align*}
by the definition of $H(X_1; (u_i, z_i), i \in [M])$ in \eqref{eq:H} and that
\begin{multline*}
    K(X_1; (u_i, z_i), i \in [M]) \\
    = \sum_{i, j \in [M]} t_i t_{j} \l(e^{-\frac{\l\lvert \dotp{z_i}{u_i}\r\rvert^2}{2}}\frac{\dotp{X_1}{u_i}}{\sqrt{2\pi}}\r)\l(e^{-\frac{\l\lvert \dotp{z_{j}}{u_{j}}\r\rvert^2}{2}}\frac{\dotp{X_1}{u_j}}{\sqrt{2\pi}}\r)\,.
\end{multline*}
It implies that
\begin{multline}\label{eq:quadratic-form-with-G}
    \mb E \l\lvert  \l(\sqrt{m} H(X_1; (u_i, z_i), i \in [M])\r)^2 - K(X_1; (u_i, z_i), i \in [M]) \r\rvert \\
    \leq \sum_{i, j \in [M]} t_i t_{j} \mb E \l\lvert G_{(u,z), (u',z')} \r\rvert\,,
\end{multline}
where
    \begin{multline*}
    G_{(u,z), (u',z')} = \l(\sqrt{m} h^{(1)}_{u} (X_1;z)\r)\l(\sqrt{m} h^{(1)}_{u'} (X_1;z')\r) \\
    - \l(e^{-\frac{\l\lvert \dotp{z}{u}\r\rvert^2}{2}}\frac{\dotp{X_1}{u}}{\sqrt{2\pi}}\r)\l(e^{-\frac{\l\lvert \dotp{z'}{u'}\r\rvert^2}{2}}\frac{\dotp{X_1}{u'}}{\sqrt{2\pi}}\r)\,.
    \end{multline*} 
Thus, it suffices to find an upper bound for $\mb E \l\lvert G_{(u,z), (u',z')} \r\rvert$ that holds uniformly over all $(u,z), (u',z') \in \bar{\mathcal{S}}$. Note that
    \begin{multline}\label{eq:G_term}
        G_{(u,z), (u',z')} = \sqrt{m}h^{(1)}_{u}(X_1;z) \left( \sqrt{m} h^{(1)}_{u'}(X_1;z') - \frac{1}{\sqrt{2\pi}}e^{-\frac{\l\lvert \dotp{z'}{u'}\r\rvert^2}{2}}\dotp{X_1}{u'} \right) \\
        + \left( \sqrt{m} h^{(1)}_{u}(X_1;z) - \frac{1}{\sqrt{2\pi}}e^{-\frac{\l\lvert \dotp{z}{u}\r\rvert^2}{2}}\dotp{X_1}{u} \right) \frac{1}{\sqrt{2\pi}}e^{-\frac{\l\lvert \dotp{z'}{u'}\r\rvert^2}{2}}\dotp{X_1}{u'}\,.
    \end{multline}
    We will estimate both terms on the right separately. For the first term, Cauchy-Schwarz inequality and inequality \eqref{eq:convergence-L2} imply that
    \begin{multline*}
        \mb E\l\lvert \sqrt{m}h^{(1)}_{u}(X_1;z) \left( \sqrt{m} h^{(1)}_{u'}(X_1;z') - \frac{1}{\sqrt{2\pi}}e^{-\frac{\l\lvert \dotp{z'}{u'}\r\rvert^2}{2}}\dotp{X_1}{u'} \right)\r\rvert \\
        \leq  C_F\l(K_{q,m}^2 + \frac{1}{m} + \frac{q^2}{m^2}\r)\,,
    \end{multline*}
    provided that $\mb E\l(\sqrt{m}h^{(1)}_{u}(X_1;z)\r)^2$ is bounded uniformly over all $m,N$ and $(u,z)$. To validate this, recall the representation of $h^{(1)}_{u}$ in \eqref{eq:Phi_uz}. It is easy to see that
\begin{multline*}
    \mb E\l[\sqrt{m}h^{(1)}_{u}\l(X_1;z\r)\r]^2 = \var\l(\sqrt{m} F_{m-1}\left(\frac{\dotp{X_1}{u}}{\sqrt{m-1}} - \sqrt{\frac{m}{m-1}}\dotp{z}{u}\right)\r) \\
    = \frac{m}{2} \mb E\Biggl[F_{m-1}\left(\frac{\dotp{X_1}{u}}{\sqrt{m-1}} - \sqrt{\frac{m}{m-1}}\dotp{z}{u}\right) \\
    - F_{m-1}\left(\frac{\dotp{X_2}{u}}{\sqrt{m-1}} - \sqrt{\frac{m}{m-1}}\dotp{z}{u}\right)\Biggr]^2.
\end{multline*}
Next, Lemma \ref{lemma:implication-of-uniform-lipschitz} implies that $F_{m-1}$ is Lipschitz continuous with constant uniformly bounded over all sufficiently large $N$, hence
\begin{multline*}
    \frac{m}{2} \mb E\Biggl[F_{m-1}\left(\frac{\dotp{X_1}{u}}{\sqrt{m-1}} - \sqrt{\frac{m}{m-1}}\dotp{z}{u}\right) \\
    - F_{m-1}\left(\frac{\dotp{X_1}{u}}{\sqrt{m-1}} - \sqrt{\frac{m}{m-1}}\dotp{z}{u}\right)\Biggr]^2 \\
    \leq \frac{C_{F}m}{2} \mb E\l(\frac{\dotp{X_1 - X_2}{u}}{\sqrt{m-1}}\r)^2 \leq C'_F\var\l(\dotp{X_1}{u}\r)\,,
\end{multline*}
where, recalling \eqref{eq:2-moment-V1}, $\var\l(\dotp{X_1}{u}\r)=1$. This concludes the estimation of the first term on the right of \eqref{eq:G_term}. The bound for second term follows in a similar fashion. Therefore,
    \begin{align*}
        \mb E \lvert G_{(u,z), (u',z')}\rvert \leq C_F\l(K_{q,m}^2 + \frac{1}{m} + \frac{q^2}{m^2}\r)\,,
    \end{align*}
    for all $(u,z), (u',z') \in \bar{\mathcal{S}}$. Combining this inequality with display  \eqref{eq:quadratic-form-with-G} yields the result.

\subsection{Technical background and auxiliary results}
\label{section:tech-results}

\begin{lemma}
\label{lemma:tail_prob}
Let $\{X_t, \ t\in T\}$ be the homogeneous Rademacher chaos of order $j$. Then for any $u > 0$,
\begin{align*}
    \mb P\l(|X_t| \geq ue\|X_t\|_2\r) \leq e^{-u^{2/j}}\,.
\end{align*}
\end{lemma}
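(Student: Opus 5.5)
The plan is to derive the tail bound from Bonami's hypercontractivity inequality for Rademacher chaos, already quoted in the proof of Theorem \ref{thm:generic_bound}, and then pass from moments to tails by Markov's inequality with an optimized choice of $p$. Since $X_t=\sum_{J\in\mathcal I^N_j}\varepsilon_J t_J$ is a homogeneous Rademacher chaos of order $j$, \citep[Theorem 3.2.2]{de2012decoupling} gives, for every $p\geq 2$,
\[
\|X_t\|_p\leq (p-1)^{j/2}\|X_t\|_2\leq p^{j/2}\|X_t\|_2 .
\]
This single moment estimate is the key input. It holds uniformly in $t$ and depends on $t$ only through $\|X_t\|_2$, so the claim is really a scalar statement about one chaos variable.

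Next I apply Markov's inequality to $|X_t|^p$:
\[
\mb P\l(|X_t|\geq ue\|X_t\|_2\r)\leq \frac{\mb E|X_t|^p}{(ue\|X_t\|_2)^p}\leq \l(\frac{p^{j/2}}{eu}\r)^{p}.
\]
I then choose $p=u^{2/j}$, so that $p^{j/2}=u$ and the right-hand side equals $e^{-p}=e^{-u^{2/j}}$. This is exactly the transition from moments to deviations recorded in the notation section, with $f(p)=p^{j/2}\|X_t\|_2$.

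The only obstacle is the range of validity, since Bonami's inequality is used for $p\geq 2$, and this requires $u\geq 2^{j/2}$. To cover small $u$, I first note that Bonami's inequality also holds for $1\leq p<2$ in the trivial form $\|X_t\|_p\leq\|X_t\|_2\leq p^{j/2}\|X_t\|_2$. The Markov argument with $p=u^{2/j}$ therefore works whenever $u\geq 1$.

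For $u<1$ the stated bound $e^{-u^{2/j}}$ exceeds $e^{-1}$, so it should suffice to show $\mb P(|X_t|\geq ue\|X_t\|_2)\leq e^{-u^{2/j}}$ directly. I would not expect this to follow from a general argument, because the bound must hold for every chaos, including ones concentrated at a single point. Instead I would treat it as a degenerate regime where the inequality is only claimed as stated. In that regime, Chebyshev's inequality gives probability at most $1/(ue)^2$, which is not sufficient for tiny $u$. For this reason I would state and use the lemma for $u\geq1$; that is the only range invoked in Theorem \ref{thm:generic_bound}, where $u\geq 2^{j/2}$. If necessary, the constant $2$ in the definition of $\psi_{2/j}$ increments in \eqref{eq:tail_prob} absorbs the remaining range, since $2e^{-u^{2/j}}\geq 1$ whenever $u^{2/j}\leq\log 2$.
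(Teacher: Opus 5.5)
Your argument is the paper's: Markov's inequality for $|X_t|^q$, Bonami's bound $\|X_t\|_q\le (q-1)^{j/2}\|X_t\|_2$, and a choice of $q$ with $q^{j/2}\approx u$. The paper takes $c=ue$ and $q=1+u^{2/j}$. This gives $e^{-q}=e^{-1-u^{2/j}}$, an extra factor $e^{-1}$ over your bound, and seems to cover every $u>0$. But for $u<1$ it uses $q<2$, where $\|X_t\|_q\le (q-1)^{j/2}\|X_t\|_2$ is false, so the paper's proof is also valid only for $u\ge 1$.

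Your restriction is therefore correct, because the statement fails for small $u$. Take $t$ with a single nonzero coordinate $t_{J_0}=1$. Then $X_t=\varepsilon_{J_0}$ and $|X_t|=\|X_t\|_2=1$, so $\mathbb{P}(|X_t|\ge ue\|X_t\|_2)=1>e^{-u^{2/j}}$ for every $0<u\le 1/e$. Your handling of $1\le p<2$ via $\|X_t\|_p\le\|X_t\|_2\le p^{j/2}\|X_t\|_2$ is right. The range $u\ge 1$ is all the proof of Theorem~\ref{thm:generic_bound} needs, since there the tail bound is only used at levels $u2^{nj/2}$ with $u\ge 2^{j/2}$.

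Two corrections to your last paragraph. First, replace the vague sentence about a ``degenerate regime where the inequality is only claimed as stated'' by the explicit example above, and state the lemma for $u\ge 1$. Second, drop the final sentence about the constant $2$. The inequality $2e^{-u^{2/j}}\ge 1$ holds only for $u\le(\log 2)^{j/2}<1$, so it leaves the interval $((\log 2)^{j/2},1)$ uncovered. Worse, apply the same example to an increment $X_t-X_s=\varepsilon_{J_0}$ at $u=1/e$. The probability is $1$, which exceeds $2e^{-e^{-2/j}}$ once $j\ge 6$. So \eqref{eq:tail_prob} genuinely fails for some $u<1$; that range is simply never needed.
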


\begin{proof}
Let $q\geq 1$ and $c > 0$ be an arbitrary number. Then, by Markov's inequality,
\begin{align*}
    \mb P\l(|X_t| \geq c\|X_t\|_2\r) = \mb P\l(|X_t|^q \geq c^q\|X_t\|_2^q\r) \leq c^{-q}\|X_t\|^q_q \|X_t\|^{-q}_2\,.
\end{align*}
Bonami's inequality \citep[][Theorem 3.2.2]{de2012decoupling} implies that $\|X_t\|_q^q \leq (q-1)^{qj/2} \|X_t\|_2^q$. Thus, 
\begin{align*}
    \mb P\l(|X_t| \geq c\|X_t\|_2\r) \leq c^{-q}(q-1)^{qj/2}\,.
\end{align*}
Setting $q = 1 + (c/e)^{2/j}$ and $c=ue$ in this inequality yields the result.
\end{proof}

\begin{lemma}\label{lemma:compute_expectation}
    Fix $1 \leq p < \infty$, $0 < \alpha, l < \infty$. Let $\gamma > 0$ and suppose $\xi$ is a positive random variable such that for some $c, u_* > 0$,
    \begin{align*}
        \mb P(\xi > \gamma u) \leq c e^{-2^{l-1}u^\alpha} \,,
    \end{align*}
    for every $u \geq u_*$. Then,
    \begin{align*}
        \l(\mb E  \xi^p \r)^{1/p} \leq \gamma\l(C^{1/p}\l(\frac{p}{\alpha}\r)^{1/\alpha + 1/(2p)}2^{-(l-1)/\alpha - 1/(2p)} + u_*\r) \,,
    \end{align*}
    where $C$ is some absolute constant.
\end{lemma}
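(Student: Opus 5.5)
We integrate the tail bound. Write $\mathbb E\xi^p=\int_0^\infty p s^{p-1}\mathbb P(\xi>s)\,ds$ and substitute $s=\gamma u$, so that
\[
\mathbb E\xi^p=\gamma^p\int_0^\infty p u^{p-1}\mathbb P(\xi>\gamma u)\,du .
\]
Split the integral at $u_*$. On $[0,u_*]$ bound the probability by $1$; this contributes at most $\gamma^p u_*^p$. On $[u_*,\infty)$ use the hypothesis. The whole task is then to bound
\[
I:=c\,p\int_0^\infty u^{p-1}e^{-2^{l-1}u^\alpha}\,du .
\]
After that we take $p$-th roots and use $(a+b)^{1/p}\le a^{1/p}+b^{1/p}$, which yields the additive form $\gamma(I^{1/p}+u_*)$.

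For $I$, substitute $v=2^{l-1}u^\alpha$, i.e.\ $u=(v2^{-(l-1)})^{1/\alpha}$. This gives
\[
I=\frac{cp}{\alpha}\,2^{-(l-1)p/\alpha}\,\Gamma(p/\alpha).
\]
Taking $p$-th roots, the factor $2^{-(l-1)/\alpha}$ appears directly. Apply Stirling's bound $\Gamma(x)\le C\,x^{x-1/2}e^{-x}\sqrt{2\pi}$ for $x\ge$ some threshold, or $\Gamma(x)\le C$ for small $x$. Then
\[
\Bigl(\tfrac{p}{\alpha}\Gamma(p/\alpha)\Bigr)^{1/p}\le C^{1/p}(p/\alpha)^{1/\alpha+1/(2p)},
\]
because $(p/\alpha)\cdot(p/\alpha)^{p/\alpha-1/2}=(p/\alpha)^{p/\alpha+1/2}$, whose $p$-th root is $(p/\alpha)^{1/\alpha+1/(2p)}$. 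We drop $e^{-p/\alpha}\le1$ and absorb $c$ into $C$.

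The extra factor $2^{-1/(2p)}$ in the statement is cosmetic and is absorbed by enlarging $C$, since $2^{1/(2p)}\le C^{1/p}$ with $C\ge\sqrt2$.

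The only delicate point is the uniform Gamma bound across the regimes $p/\alpha<1$ and $p/\alpha\ge1$. When $p/\alpha<1$, $\Gamma(x)\le 1/x+1$, so $x\Gamma(x)\le 2$. That is still $\le C\,x^{x+1/2}$ only up to a factor $x^{-x-1/2}$, which could blow up as $x\to0$. In the paper's application, however, $\alpha=2/j\le1$ and $p\ge1$, so $p/\alpha\ge1$ always and Stirling suffices. To cover general $\alpha$ as stated, one can instead bound the integral over $[u_*,\infty)$ only, or note that the claimed bound is only needed in the regime $p\ge\alpha$. I expect to restrict to this regime, or to absorb the small-$x$ case into the $u_*$ term.
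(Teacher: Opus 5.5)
Your argument is the one the paper defers to (the proof of Lemma A.5 in Dirksen's paper). You integrate the tail, bound the probability by $1$ on $[0,u_*]$, reduce the remainder to $\frac{cp}{\alpha}2^{-(l-1)p/\alpha}\Gamma(p/\alpha)$, and apply Stirling. For $p/\alpha\ge 1$ every step you wrote is correct; $C$ must depend on $c$, which is unavoidable and harmless since $c\le 16$ in the application.

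The small-$p/\alpha$ difficulty you flagged is a defect of the statement, not of your proof. The bound is false in that regime, so your fallback of absorbing that case into the $u_*$ term cannot work. For a counterexample, take $\gamma=1$, $l=1$, $p=1$, $c=2$, $\alpha\ge1$ and $\xi\equiv 1/2$:
\begin{itemize}
\item For $u<1/2$ we have $u^\alpha<2^{-\alpha}\le 1/2$, hence $2e^{-u^\alpha}>2e^{-1/2}>1$.
\item For $u\ge 1/2$ the tail vanishes, so the hypothesis holds for every $u_*>0$.
\item The claimed right-hand side is at most $C\alpha^{-1/2}2^{-1/2}+u_*$, which tends to $0$ as $\alpha\to\infty$ and $u_*\to0$, while $\mathbb{E}\xi=1/2$.
\end{itemize}

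So you should either restrict to $p\ge\alpha$ or replace $p/\alpha$ by $\max\{p/\alpha,1\}$ in the bound. The latter follows from your computation, because $x\Gamma(x)=\Gamma(x+1)\le 1$ for $0<x\le 1$.

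Nothing is lost in the application, where $p\ge 2$ and $\alpha=2/j$ give $p/\alpha=pj/2\ge 1$. Note that for $j=1$ one has $\alpha=2$. So it is $p\ge 2$, not $\alpha\le 1$ as you wrote, that secures this.
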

\begin{proof}
See the proof of Lemma A.5 in \cite{dirksen2015tail}. 
\end{proof}

\begin{lemma}
\label{lemma:incomplete-gamma-bound}
    For all $a \geq 1$ and $x \geq 2a$,
    \begin{align*}
        \int_x^\infty t^ae^{-t} dt \leq 2x^ae^{-x}\,.
    \end{align*}
\end{lemma}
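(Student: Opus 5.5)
The substitution $s=t-x$ reduces everything to a one-line estimate of $(1+s/x)^a$, so I plan to prove the statement directly in that form.

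Writing $t=x+s$ with $s\ge 0$,
\[
\int_x^\infty t^a e^{-t}\,dt = x^a e^{-x}\int_0^\infty \Bigl(1+\frac{s}{x}\Bigr)^a e^{-s}\,ds .
\]
It therefore suffices to show that the last integral is at most $2$ whenever $x\ge 2a$.

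Using $1+y\le e^{y}$, I bound $(1+s/x)^a\le e^{as/x}$. The hypothesis $x\ge 2a$ gives $a/x\le 1/2$, hence $e^{as/x}\le e^{s/2}$. The remaining integral is
\[
\int_0^\infty e^{s/2}e^{-s}\,ds=\int_0^\infty e^{-s/2}\,ds = 2,
\]
and this yields exactly the claimed bound $\int_x^\infty t^a e^{-t}\,dt\le 2x^ae^{-x}$.

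No step is a real obstacle: the only thing to check is that the hypothesis $x\ge 2a$ produces the rate $1/2$ in the exponent, which is precisely what makes the constant equal to $2$. The assumption $a\ge 1$ is not needed; the argument works for any $a>0$ with $x\ge 2a$.
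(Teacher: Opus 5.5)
Your proof is correct, and it takes a different route from the paper.

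The paper integrates by parts, $\int_x^\infty t^a e^{-t}\,dt = x^a e^{-x} + a\int_x^\infty t^{a-1}e^{-t}\,dt$. It then uses $t^{a-1}\le t^a/x$ on $[x,\infty)$, so the last term is at most $\frac{a}{x}\int_x^\infty t^a e^{-t}\,dt \le \frac12\int_x^\infty t^a e^{-t}\,dt$. Finally it absorbs this half into the left-hand side, which implicitly uses that the integral is finite.

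Your substitution $t=x+s$ together with $(1+s/x)^a\le e^{as/x}$ gives a direct bound with no absorption step. The two arguments are equally strong. If you keep $a/x$ rather than replacing it by $1/2$, you get $\int_0^\infty e^{-(1-a/x)s}\,ds=\frac{x}{x-a}$, i.e. $\int_x^\infty t^a e^{-t}\,dt\le \frac{x}{x-a}\,x^a e^{-x}$ for every $x>a$. This is exactly what the paper's rearrangement yields before specializing to $x\ge 2a$.

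Your remark that $a\ge 1$ is unnecessary is right, and it applies equally to the paper's argument.
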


\begin{proof}
    Integration by parts yields that
    \begin{align*}
        \int_x^\infty t^ae^{-t} dt = x^ae^{-x} + a \int_x^\infty t^{a-1} e^{-t} dt\,.
    \end{align*}
    Moreover, $\int_x^\infty t^{a-1} e^{-t} dt \leq \frac{1}{x} \int_x^\infty t^a e^{-t} dt$. Whenever $a/x \leq 1/2$, the claim follows.
\end{proof}

\begin{lemma}\label{lemma:uniform-lipschitz}
    Let $X_1, X_2 \ldots$ be i.i.d. copies of a random variable $X \in \mb R^d$ with $\mb E X = 0$ and $\cov (X) = I_d$. Moreover, assume that $X$ has elliptically symmetric distribution with characteristic generator $\psi : \mb R_+ \mapsto \mb R$. Given $u \in \mathcal{S}^{d-1}$, let $Z_n(u) = \dotp{\frac{\sum_{j=1}^n X_j}{\sqrt{n}}}{u}$. Assume that there exists $\alpha > 0$ and $M > 0$ such that $\psi(t^2) = o\l(\frac{1}{t^\alpha}\r)$ whenever $t > M$. Then for $n_0 \in \mb N$ large enough and all $n \geq n_0$, the density of $Z_n(u)$ exists and is Lipschitz continuous with a uniform Lipschitz constant for all $n$ sufficiently large. 
\end{lemma}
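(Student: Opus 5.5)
We work with the characteristic function of $Z_n(u)$. Elliptical symmetry with identity covariance gives that $\dotp{X}{u}$ has characteristic function $\psi(t^2)$, independent of $u$. Hence $Z_n(u)$ has characteristic function $\varphi_n(t)=\psi(t^2/n)^n$. The plan is to show that $\int_{\mb R}|t|\,|\varphi_n(t)|\,dt\le K<\infty$ uniformly in $n\ge n_0$. Fourier inversion then gives a density
\[
\phi_n(x)=\frac{1}{2\pi}\int e^{-itx}\varphi_n(t)\,dt ,
\]
and the elementary bound $|e^{-itx}-e^{-ity}|\le|t||x-y|$ yields $|\phi_n(x)-\phi_n(y)|\le \frac{K}{2\pi}|x-y|$. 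This is a uniform Lipschitz constant, independent of $u$ as well.

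To bound the integral, split the $t$-axis into three regions and bound $|\varphi_n(t)|=|\psi(t^2/n)|^n$ on each, using three facts about the one-dimensional law $F_1$ of $\dotp{X}{u}$: it has mean zero, variance one, and the decay $\psi(s^2)=o(s^{-\alpha})$.

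\textbf{Small $|t|$.} Since $\mb E\dotp{X}{u}^2=1$, we have $\psi(s^2)=1-s^2/2+o(s^2)$. Choose $\delta>0$ with $|\psi(s^2)|\le e^{-s^2/4}$ for $|s|\le\delta$. Then for $|t|\le\delta\sqrt n$, $|\varphi_n(t)|\le e^{-t^2/4}$, whose integral against $|t|$ is a finite constant.

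\textbf{Intermediate $|t|$.} The decay hypothesis makes $\psi(s^2)$ integrable to some power; more simply, it implies $F_1$ is not a lattice distribution, since a lattice law has periodic, non-decaying characteristic function. So $|\psi(s^2)|<1$ for all $s\neq 0$. Continuity, together with $|\psi(s^2)|\to0$ as $|s|\to\infty$, then gives $\eta:=\sup_{|s|\ge\delta}|\psi(s^2)|<1$.

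\textbf{Large $|t|$.} For $|t|\ge\delta\sqrt n$, write $|\varphi_n(t)|\le \eta^{\,n-k}\,|\psi(t^2/n)|^{k}$ with a fixed integer $k>2/\alpha$. Substituting $t=\sqrt n s$,
\[
\int_{|t|\ge\delta\sqrt n}|t|\,|\varphi_n(t)|\,dt\le \eta^{n-k}\, n\int_{|s|\ge\delta}|s|\,|\psi(s^2)|^k\,ds .
\]
The last integral is finite because $|s|\,|\psi(s^2)|^k=o(|s|^{1-k\alpha})$ with $k\alpha>2$. The prefactor $n\eta^{n-k}$ tends to $0$, so this piece is bounded for all $n\ge n_0:=k$. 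Together with the small-$|t|$ piece, this gives the uniform bound $K$.

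The main obstacle is the intermediate-region claim that $\sup_{|s|\ge\delta}|\psi(s^2)|<1$, that is, ruling out $|\psi|=1$ away from the origin. I would handle it as above: $|\psi(s_0^2)|=1$ at some $s_0\neq0$ forces $F_1$ to be supported on a lattice, which makes $|\psi|$ periodic and contradicts its decay at infinity. The remaining issue is to make sure $n_0$ and $K$ depend only on $F_1$ (through $\delta$, $\eta$, $k$ and the integral), which the argument delivers. As a byproduct, the same estimate with the local CLT gives $\phi_n\to$ the standard normal density uniformly, which is what Lemma \ref{lemma:implication-of-uniform-lipschitz} will need.
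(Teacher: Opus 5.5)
Your proposal is correct and follows essentially the same route as the paper: Fourier inversion, with the integral $n\int|s|\,|\psi(s^2)|^n\,ds$ (equivalently your $\int|t|\,|\varphi_n(t)|\,dt$) split into three parts. Near the origin a second-moment bound gives Gaussian decay; on a middle region $|\psi(s^2)|$ is bounded away from $1$; in the tail $k$ factors of the decay hypothesis with $k\alpha>2$ are integrable, while the remaining geometric factor absorbs the $n$. The only differences are that you merge the paper's middle and tail regions via $\eta$, and that you justify $|\psi(s^2)|<1$ for $s\neq 0$ by a non-lattice argument from the decay hypothesis. That is more self-contained than the paper's appeal to absolute continuity of $\dotp{X}{u}$, which is not among the stated assumptions.
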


\begin{proof}
    Define $Y_j(u) = \dotp{X_j}{u}$. As stated in Section \ref{sec:elliptical-symmetric-distribution}, the distribution of $Y_1(u)$, hence its characteristic function $\phi(t)$, does not depend on $u$. Moreover, $\phi(t) = \psi(t^2)$ for all $t \in \mb R$. Next, the characteristic function of $\sum_{j=1}^n Y_j$ equals $\l(\phi(t)\r)^n$. Thus, the density of $\sum_{j=1}^n Y_j$ at $x \in \mb R$ exists if $\int_{\mb R} \l| e^{-itx} \l(\phi(t)\r)^n \r| dt < \infty$. However, this is true for all $x$ since
    \begin{align*}
        \int_{\mb R} \l\lvert e^{-itx} \l(\phi(t)\r)^n \r\rvert dt \leq \int_{\mb R} \lvert \phi(t) \rvert^n \leq 2M + \int_{\lvert t \rvert > M} \frac{C}{\lvert t\rvert^{n\alpha}} dt\,,
    \end{align*}
    which is finite for $n > \lfloor\frac{1}{\alpha}\rfloor+1$.
    
    Next, let $f_n$ denote the density of $Z_n(u)$. Using the Fourier inversion formula and the dominated convergence theorem, we deduce that
    \begin{align*}
        \l\lvert \frac{d f_n(x)}{dx}\r\rvert \leq \frac{1}{2\pi}\int_0^\infty \lvert t\rvert \l\lvert \phi\l(\frac{t}{\sqrt{n}}\r) \r\rvert^n dt = \frac{n}{2\pi} \int_0^\infty |t| \l\lvert \phi\l(t\r) \r\rvert^n dt
    \end{align*}
    for all $x \in \mb R$. Thus, establishing a finite upper bound on $\sup_{n \geq n_0}n \int_0^\infty |t| \l\lvert \phi\l(t\r) \r\rvert^n dt$ suffices to complete the proof. To this end, we divide the integral into three parts: the integrals over $A_1 = \{t: \lvert t\rvert < \delta\}$, $A_2 = \{t: \delta \leq \lvert t\rvert \leq K\}$, and $A_3 = \{t : \lvert t\rvert > K\}$ where $\delta > 0$ is a sufficiently small constant and $K > M$ is a sufficiently large constant. Since $Y_1$ has symmetric distribution, $\phi(t) = \mb E \cos(tY_1)$. Using the inequality 
    \[
    0 < \cos(x) \leq 1 - \frac{x^2}{4} 
    \]
    that holds for $x \in (-1, 1)$, we deduce that for $t \in (-1, 1)$,
    \begin{multline*}
        \lvert\phi(t) \rvert \leq \mb E\l[\l(1 - \frac{(tY_1)^2}{4} \r)I\l\{\lvert tY_1\rvert \leq 1\r\} \r] + \mb E\l[I\l\{\lvert tY_1\rvert > 1\r\}\r] \\
        \leq 1 - \frac{t^2 \mb E\l(Y_1^2 I\{\lvert tY_1\rvert \leq 1\}\r)}{4}.
    \end{multline*}
    Since $\mb E Y_1^2 = \mb E \l\lvert \dotp{X_1}{u} \r\rvert^2 = 1$, the dominated convergence theorem implies that there exists a $\delta > 0$ such that $\mb E\l(Y_1^2 I\{\lvert tY_1\rvert \leq 1\}\r) > \frac{1}{2}$ for all $t \in (-\delta, \delta)$, which in turn implies that $\lvert\phi(t) \rvert \leq 1 - \frac{t^2}{8}$. Using the elementary inequality $1 - x \leq e^{-x}$, we conclude that$\lvert\phi(t) \rvert \leq e^{-t^2/8}$. Therefore,
    \begin{align*}
        n \int_{A_1} |t| \l\lvert \phi\l(t\r) \r\rvert^n dt \leq 2n \int_0^\delta  t e^{-\frac{nt^2}{8}} dt = C \l(1 - e^{-cn\delta^2}\r),
    \end{align*}
    Next, since $Y_1$ has absolutely continuous distribution, $\lvert \phi(t) \rvert < 1$ for all $t \in A_2$. Moreover, because $A_2$ is compact and $\phi$ is continuous, $\sup_{t \in A_2} \lvert \phi(t) \rvert = A$ for some $A < 1$, hence
    \begin{align*}
        n \int_{A_2} |t| \l\lvert \phi\l(t\r) \r\rvert^n dt \leq 2n \int_{\delta}^K A^n t dt = C_K n A^n \to 0 \text{ as } n\to\infty.
    \end{align*}
    Therefore, $n \int_{A_2} |t| \l\lvert \phi\l(t\r) \r\rvert^n dt$ is uniformly bounded for large $n$. Finally, Riemann-Lesbegue lemma implies $\lvert\phi(t) \rvert \rightarrow0$ as $\lvert t\rvert \rightarrow \infty$ and we can choose $K>0$ such that $\lvert \phi(t) \rvert \leq \frac{1}{2}$ whenever $\lvert t\rvert > K$, implying that
    \begin{align*}
        n \int_{A_3} |t| \l\lvert \phi\l(t\r) \r\rvert^n dt \leq Cn \l(\frac{1}{2}\r)^{n - k} \int_K^\infty \frac{1}{t^{k\alpha - 1}} dt\,.
    \end{align*}
    Here, we used the fact that $\psi(t) = o\l(\frac{1}{t^\alpha}\r)$ for $t > K$. Selecting $k$ to be an integer such that $k\alpha > 2$ yields $n \l(\frac{1}{2}\r)^{n - k} \int_K^\infty \frac{1}{t^{k\alpha - 1}} dt \rightarrow 0$ as $n \rightarrow \infty$. Thus, the integral is uniformly bounded for $n$ large enough.
\end{proof}

\begin{lemma}
\label{lemma:implication-of-uniform-lipschitz}
Let $X_1, X_2,\ldots$ be i.i.d. copies of a random variable $X \in \mb R$ with absolutely continuous distribution. Assume that $\mb E X= 0$ and $\var(X) = 1$. Define $Y_n = \frac{1}{\sqrt{n}}\sum_{j=1}^n X_j$, and let the density of $Y_n$ be denoted $f_n$. If $f_n$ is Lipschitz continuous with Lipschitz constant $L$ uniformly for all $n \geq n_0$, then $f_n(x) \rightarrow \phi(x)$ uniformly over all $x \in \mb R$ as $n \rightarrow \infty$, where $\phi$ is the density of the standard normal law. Consequently, $f_n$ is also uniformly bounded.
\end{lemma}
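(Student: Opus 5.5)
The plan is to pass through characteristic functions and Fourier inversion, in the spirit of the local limit theorem for densities. First I will use the uniform Lipschitz bound to get uniform boundedness of $f_n$. Then I will show pointwise convergence $f_n(x)\to\phi(x)$ by an averaging argument. Finally I will upgrade pointwise convergence to uniform convergence using equicontinuity together with uniform tail control.

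\textbf{Uniform boundedness.} Let $n\ge n_0$ and suppose $f_n(x_0)=M$ for some $x_0$. By the Lipschitz bound, $f_n(x)\ge M-L|x-x_0|$ on the interval $|x-x_0|\le M/L$. Integrating over this interval gives
\[
1\ \ge\ \int f_n\ \ge\ \frac{M^2}{L},
\]
hence $\sup_x f_n(x)\le \sqrt{L}$ for all $n\ge n_0$. So the family $\{f_n\}_{n\ge n_0}$ is uniformly bounded, which is the final claim of the lemma once convergence is shown.

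\textbf{Pointwise convergence.} The classical CLT gives $Y_n\Rightarrow N(0,1)$, so $F_n(b)-F_n(a)\to\Phi(b)-\Phi(a)$ for all $a<b$, uniformly in $a,b$ by P\'olya's theorem. Fix $x$ and $h>0$. The Lipschitz bound gives
\[
\Bigl|f_n(x)-\frac{1}{2h}\int_{x-h}^{x+h}f_n\Bigr|\le Lh ,
\]
and the analogous bound holds for $\phi$, whose Lipschitz constant is bounded by $1$. Therefore
\[
|f_n(x)-\phi(x)|\ \le\ (L+1)h+\frac{1}{2h}\Bigl(|F_n(x+h)-\Phi(x+h)|+|F_n(x-h)-\Phi(x-h)|\Bigr).
\]
Taking $\limsup_n$ and then $h\downarrow0$ shows $f_n(x)\to\phi(x)$. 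The same inequality, with $\sup_x\|F_n-\Phi\|_\infty\to0$ from P\'olya's theorem, already gives uniformity directly:
\[
\sup_x|f_n-\phi|\ \le\ (L+1)h+h^{-1}\sup_y|F_n(y)-\Phi(y)|.
\]

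\textbf{Upgrade to uniform convergence.} In the last display I will choose $h=h_n=\sup_y|F_n(y)-\Phi(y)|^{1/2}$, which tends to $0$ and makes both terms vanish. This uses no Fourier analysis and no tail control.

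The only genuinely delicate point is the appeal to P\'olya's theorem, namely that weak convergence to a continuous limit distribution function is uniform. This is standard, so I expect no real obstacle; the whole argument rests on the interplay between the Lipschitz bound and uniform CDF convergence.
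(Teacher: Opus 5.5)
Your argument is correct and is essentially the paper's proof: you compare $f_n$ and $\phi$ with their averages over a window of width $h$ via the Lipschitz bound, control the difference of the averages by $\sup_y|F_n(y)-\Phi(y)|\to 0$ (P\'olya's theorem), and choose $h=\sup_y|F_n(y)-\Phi(y)|^{1/2}$. The only differences are your symmetric window, your direct bound $\sup_x f_n\le\sqrt{L}$ (the paper instead reads off boundedness from the uniform convergence), and an opening paragraph promising Fourier inversion and tail control that your proof never uses and should drop.
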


\begin{proof}
    For any $x \in \mb R$ and $h > 0$,
    \begin{multline*}
        \l\lvert \frac{1}{h}\mb P\l(x \leq Y_n \leq x + h\r) - f_n(x) \r\rvert \leq \frac{1}{h}\int_0^{h} \lvert f_n(x + t) - f_n(x) \rvert dt \\
        \leq \frac{1}{h} \int_0^h L t dt = \frac{Lh}{2}\,.
    \end{multline*}
    Let $W$ be a standard normal random variable. A similar inequality can be deduced. That is,
    \begin{align*}
         \l\lvert \frac{1}{h}\mb P\l(x \leq W \leq x + h\r) - \phi(x) \r\rvert \leq  \frac{L_\phi h}{2}\,,
    \end{align*}
    where $L_\phi$ is the Lipschitz constant for standard normal density $\phi$. Thus,
    \begin{multline*}
        \l\lvert f_n(x) - \phi(x)\r\rvert \\
        \leq \l\lvert \frac{1}{h}\l(\mb P\l(x \leq Y_n \leq x + h\r) - \mb P(x \leq W \leq x+h)\r)\r\rvert + \frac{(L + L_\phi)h}{2}  \\
        \leq \frac{(L + L_\phi)h}{2} + \frac{2\Delta_n}{h}\,,
    \end{multline*}
    where $\Delta_n = \sup_z \l\lvert \mb P(Y_n \leq z) - \mb P(W \leq z)\r\rvert$. Polya's theorem (see exercise 3.2.9 in \cite{durrett2019probability}) implies that $\Delta_n \rightarrow 0$. Thus, taking $h = h(n) := \sqrt{\Delta_n}$ yields the result.
\end{proof}

\bibliography{u-stats}
\bibliographystyle{apalike}

\end{document}